\documentclass[reqno]{amsart}
\usepackage{amssymb}
\usepackage{amsmath}
\usepackage{bm}
\usepackage{amsthm}
\usepackage[usenames]{color}
\usepackage{graphicx}
\usepackage{cite}
\usepackage{bbm}



\usepackage[colorlinks,linkcolor=blue,anchorcolor=red,citecolor=blue]{hyperref}
\usepackage[margin=1in]{geometry} 
\usepackage{marginnote}

\usepackage{color}

\newtheorem{lemma}{Lemma}[section]
\newtheorem{theorem}{Theorem}[section]

\newtheorem{proposition}{Proposition}[section]

\numberwithin{equation}{section}

\arraycolsep=1.5pt

\newcommand{\dis}{\displaystyle}

\newcommand{\rmi}{{\mathrm i}}
\newcommand{\rmre}{{\rm Re}}

\newcommand{\C}{\mathbb{C}}

\newcommand{\R}{\mathbb{R}}

\newcommand{\Z}{\mathbb{Z}}

\renewcommand{\S}{\mathbb{S}}
\newcommand{\T}{\mathbb{T}}


\newcommand{\CE}{\mathcal{E}}

\newcommand{\CZ}{\mathcal{Z}}

\newcommand{\para}{\shortparallel}

\newcommand{\ep}{\epsilon}

\newcommand{\na}{\nabla}

\newcommand{\al}{\alpha}
\newcommand{\be}{\beta}
\newcommand{\ga}{\gamma}
\newcommand{\om}{\omega}
\newcommand{\la}{\lambda}
\newcommand{\de}{\delta}
\newcommand{\si}{\sigma}
\newcommand{\pa}{\partial}
\newcommand{\ka}{\kappa}
\newcommand{\eps}{\epsilon}

\newcommand{\De}{\Delta}
\newcommand{\Ga}{\Gamma}

\newcommand{\vertiii}[1]{{\left\vert\kern-0.25ex\left\vert\kern-0.25ex\left\vert #1
		\right\vert\kern-0.25ex\right\vert\kern-0.25ex\right\vert}}

\makeatletter
\@namedef{subjclassname@2020}{%
	\textup{2020} Mathematics Subject Classification}
\makeatother

\setcounter{tocdepth}{2}

\begin{document}
\title[Mixture of Monatomic and Polyatomic Gases]{Well-posedness and time-asymptotic of Boltzmann equations for monatomic and polyatomic mixtures}

\author[R. Alonso]{Ricardo Alonso}
\address{College of Science and Engineering, HBKU, Education City, Doha, Qatar}
\email{ralonso@hbku.edu.qa}

\author[Z.-G. Li]{Zongguang Li}
\address{Department of Applied Mathematics, The Hong Kong Polytechnic University, Hung Hom, Hong Kong, P.R.~China}
\email{zongguang.li@polyu.edu.hk}

\begin{abstract}
This paper considers a system of Boltzmann equations modelling the mixture of monatomic and polyatomic gases in an $L^{2}-L^{\infty}$ perturbation theory around global modified Maxwellians accounting for the internal energy of the mixture in the whole space and the torus. We investigate the pointwise decay in velocity and internal energy of the linearized Boltzmann operators in the four types of collisions.  A novel approach is developed to deal with the additional internal energy variable $I\in \R_+$ and the loss of symmetry due to dissimilar masses of the mixture components.  Subsequently, we carry out a classical $L^2-L^\infty$ method to establish the well-posedness theory of the system.  The optimal polynomial time decay rate on the whole space is obtained accordingly based on the spatial Fourier's study of the linearized system.   The analysis shows the structure of a perturbed Euler-type model for the solution's macroscopic quantities: density, bulk velocity, and temperature, near the steady state, which gives a potential application to investigate fluid limit problems.   In addition, this work proves exponential time decay in the torus and fills the gap of classical multi-species Boltzmann in the whole space.
\end{abstract}


\subjclass[2020]{35Q20, 76P05, 35B35}


\keywords{Boltzmann equation, global mild solutions, time-asymptotic, polyatomic mixtures, Borgnakke-Larsen}
\maketitle
\thispagestyle{empty}

\tableofcontents

\section{Introduction}
The objective of this paper is to establish rigorous estimates to understand inhomogeneous Boltzmann conservative systems of mixed monoatomic and polyatomic gases in the context of bounded solutions close to thermal equilibrium.   These systems are improved models of the standard Boltzmann systems of ideal gases designed to account for non-ideal effects by introducing an additional continuous kinetic variable associated to the internal energy of the system for the polyatomic gases, see \cite{BBG,BBBD}.

The model is based on the distinction of two type of agents: monoatomic and polyatomic.  The main core of the document's technical analysis consists in estimating the contributions of the four induced type of interactions, namely, pure monoatomic, pure polyatomic, and the cross interactions of monoatomic-polyatomic and polyatomic-monoatomic agents, see \cite{ACG}.  Once a suitable estimation of each collisional form contribution is performed, a modified approach of $L^{2}-L^{\infty}$ decay, which is designed to deal with a polyatomic gaseous mixture, is implemented at the linear and, then, at the non-linear perturbed equation.  The perturbations are considered in $L^2\cap L^{\infty}$-spaces with Gaussian tails.   We mention that the final results are comparable in generality to the ones related to the classical theory of Boltzmann systems for ideal gases, see \cite{Guo,UY}.

Given the accuracy of these modified systems in matching macroscopical properties of the gases when using moment closure methods and expansions, as documented for example in \cite{DOPT}, the theory of polyatomic gases is a very active field of research.  Thus, we present an analysis of monoatomic and polyatomic systems in the context of continuous internal energy approach that uses the Borgnakke-Larsen procedure for the collision parametrization.  Such procedure not only allows to capture important features of non-ideal gases, but also make the model accessible to analytical and numerical tools. 

Let us mention that the analysis is performed for hard potential models with bounded collisional kernels.   We can differentiate rate of hard potentials and collisional cross sections between gases which satisfy an lower-upper control with bounded partition functions of the Borgnakke-Larsen type.  These assumptions cover the current models used in numerical applications. This paper provides the first well-posedness result on the spatially inhomogeneous Boltzmann equation for monatomic-polyatomic gaseous mixture. It also includes the solutions in the whole space for either single polyatomic gas or mixture of classical monatomic gases as its special cases. We expect that the $L^\infty$ estimates on the linearized operator and the $L^2\cap L^\infty$ method established in this document will have further applications in kinetic equations describing gas mixtures or a single polyatomic gas, such as gas mixtures in bounded domains, hydrodynamic limits, wave patterns, half space problems, etc.

\subsection{The model in question}
We consider the Boltzmann equation for a mixture of monatomic and polyatomic gases:
 \begin{eqnarray}\label{BE}
	&\dis \pa_tF+v\cdot \na_x F=Q(F,F),\quad F(t=0)=F_0,
\end{eqnarray}
where $F=(F_1,F_2,\cdots, F_n)^T$ is the distribution function for a integer number of agents $n\geq 1$, placed in $x\in \T^3$ or $\R^3$, and with velocity $v\in \R^3$. Correspondingly, the initial data is given by $F_0=(F_{10},\cdots, F_{n0})^T$. For integer $1\leq p\leq n$, the first $p$ components of $F$ are associated to monatomic gases with densities $(F_1,F_2,\cdots, F_p)=(F_1,F_2,\cdots, F_p)(t,x,v)$, and the rest are for polyatomic gases modeled using the additional continuous internal energy variable $I\in [0,\infty)$ so that their densities are given by $(F_{p+1},\cdots, F_n)=(F_{p+1},\cdots, F_n)(t,x,v,I)$. The vector valued collision operator $Q$ has the form $Q=(Q_1,Q_2,\cdots,Q_n)$ with
\begin{align}\label{Qi}
	Q_i(F,F)=\sum^n_{j=1}Q_{ij}(F,F).
\end{align}
For brevity we denote $(F_{i*},F^\prime_i,F^\prime_{i*})=(F_i(t,x,v_*),F_i(t,x,v^\prime),F_i(t,x,v^\prime_*))$ for monoatomic gases $1\leq i\leq p$, and  $(F_{i*},F^\prime_i,F^\prime_{i*})=(F_i(t,x,v_*,I_*),F_i(t,x,v^\prime,I^\prime),F_i(t,x,v^\prime_*,I^\prime_*))$ for polyatomic gases $p+1\leq i\leq n$.
Let $\de_i\in [2,\infty)$ be the number of internal degrees of freedom with $\de_i=2$ for $1\leq i\leq p$. Depending on the range of $i$ and $j$, the collision operator $Q_{ij}$ is defined by 
\begin{enumerate}
	\item $1\leq i,\, j\leq p$ (Mono-Mono interactions): \begin{align*}
		Q_{ij}(F,G)(t,x,v)=\int_{(\R^3)^3} W_{ij}(v,v_*|v^\prime,v^\prime_*)\left(F_i^\prime G_{j*}^\prime-F_iG_{j*}\right)dv_*dv^\prime dv^\prime_*.
	\end{align*}
	\item $1\leq i\leq p$, $p+1\leq j\leq n$ (Mono-Poly interactions): \begin{align*}
		Q_{ij}(F,G)(t,x,v)=\int_{(\R^3)^3\times (\R_+)^2} W_{ij}(v,v_*,I_*|v^\prime,v^\prime_*,I^\prime_*)\left(\frac{F_i^\prime G^\prime_{j*}}{( I^\prime_*)^{\de_j/2-1}}-\frac{F_iG_{j*}}{( I_*)^{\de_j/2-1}}\right)dv_*dv^\prime dv^\prime_* dI_* dI^\prime_*.
	\end{align*}
	
		\item $p+1\leq i\leq n$, $1\leq j\leq p$ (Poly-Mono interactions): \begin{align*}
		Q_{ij}(F,G)(t,x,v,I)=\int_{(\R^3)^3\times \R_+} W_{ij}(v,I,v_*|v^\prime,I^\prime,v^\prime_*)\left(\frac{F_i^\prime G^\prime_{j*}}{(I^\prime)^{\de_i/2-1}}-\frac{F_iG_{j*}}{(I)^{\de_i/2-1}}\right)dv_*dv^\prime dv^\prime_* dI^\prime .
	\end{align*}
	
	\item $p+1\leq i,\, j\leq n$ (Poly-Poly interactions): \begin{align*}
		Q_{ij}(F,G)(t,x,v,I)=\int_{(\R^3)^3\times (\R_+)^3} &W_{ij}(v,I,v_*,I_*|v^\prime,I^\prime,v^\prime_*,I^\prime_*)\notag\\
		\times&\left(\frac{F_i^\prime G^\prime_{j*}}{(I^\prime)^{\de_i/2-1}( I^\prime_*)^{\de_j/2-1}}-\frac{F_iG_{j*}}{(I)^{\de_i/2-1}(I_*)^{\de_j/2-1}}\right)dv_*dv^\prime dv^\prime_* dI_*dI^\prime dI^\prime_*.
	\end{align*}
\end{enumerate}
Denote $\bm{\delta}_3$ and $\bm{\delta}_1$ to be the Dirac's delta function in $\R^3$ and $\R$ respectively. Moreover, adopting the notation in \cite{Bernhoff}, we let	
\begin{align}\label{DefZ}
	Z=Z_i:=\left\{
	\begin{array}{rl}
		&v,\;\;  i\leq p,\\
		&(v,I),\;\; i> p,
	\end{array}\right. \in \CZ_i:=\left\{
	\begin{array}{rl}
	&\R^3,\;\; i\leq p,\\
	&\R^3\times\R_+,\;\; i> p,
	\end{array}\right.
\end{align}
and with $Z_*,Z',Z'_*$ defined analogously.
The collisional kernels $W_{ij}$ satisfy 
\begin{align}\label{DefW}
	W_{ij}(Z,&Z_*|Z',Z'_*)=(m_i+m_j)^2m_im_j(I)^{\de_i/2-1}(I_*)^{\de_j/2-1}\si_{ij}{(|v-v_*|,\cos\theta,I,I_*,I^\prime,I^\prime_*)}\frac{|v-v_*|}{|v^\prime-v^\prime_*|}\notag\\
	&\times\bm{\de}_3(v+v_*-v^\prime-v^\prime_*)\bm{\de}_1\Big(\frac{|v|^2}{2}+\frac{|v_*|^2}{2}-\frac{|v^\prime|^2}{2}-\frac{|v^\prime_*|^2}{2}+(I-I^\prime)\chi_{\{i>p\}}+(I_*-I^\prime_*)\chi_{\{j>p\}}\Big),
\end{align}
with the properties
\begin{align}
	&W_{ij}(Z,Z_*|Z^\prime,Z^\prime_*)=W_{ij}(Z_*,Z|Z^\prime_*,Z^\prime),\label{ProW1}\\
	&W_{ij}(Z,Z_*|Z^\prime,Z^\prime_*)=W_{ij}(Z^\prime,Z^\prime_*|Z,Z_*),\label{ProW2}\\
	&W_{ii}(Z,Z_*|Z^\prime,Z^\prime_*)=W_{ii}(Z,Z_*|Z^\prime_*,Z^\prime),\label{ProW3}
\end{align}
where $m_i>0$ is the mass for molecules of the $i-th$ species, $\cos\theta=\big|\frac{v-v_*}{|v-v_*|}\cdot\frac{v^\prime-v^\prime_*}{|v^\prime-v^\prime_*|}\big|$, and $\chi$ is the characteristic function. Note that $\si_{ij}$ is independent of $I,I'$ for $i\leq p$ and independent of $I_*,I'_*$ for $j\leq p$.   Throughout the paper, we assume that $\si_{ij}$ satisfies
\begin{align}\label{si}
	\frac{1}{C}\sqrt{|u|^2-\frac{2\De I}{\mu_{ij}}}\frac{(I^\prime )^{\de_i/2-1}(I^\prime_*)^{\de_j/2-1}}{|u|E_{ij}^{(\eta+\de_i\chi_{\{i>p\}}+\de_j\chi_{\{j>p\}})/2}}\leq \si_{ij}\leq C\sqrt{|u|^2-\frac{2\De I}{\mu_{ij}}}\frac{(I^\prime )^{\de_i/2-1}(I^\prime_*)^{\de_j/2-1}}{|u|E_{ij}^{(\eta+\de_i\chi_{\{i>p\}}+\de_j\chi_{\{j>p\}})/2}},
\end{align} 
for $0\leq\eta<1$ and some constant $C>0$.  We have introduced $\mu_{ij}=\frac{m_im_j}{m_i+m_j}$, and relative velocity, relative internal energy, and total energy
\begin{equation*}
u=v - v_*, \quad \De I=(I'-I)\chi_{\{i>p\}}+(I'_*-I_*)\chi_{\{j>p\}}, \quad \text{and} \quad E_{ij}=\frac{\mu_{ij}}{2}|u|^2+I\chi_{\{i>p\}}+I_*\chi_{\{j>p\}}.
\end{equation*}
We further define the post-collisional counterparts $u'=v'-v'_*$ and $E'_{ij}:=\frac{\mu_{ij}}{2}|u'|^2+I'\chi_{\{i>p\}}+I^\prime_*\chi_{\{j>p\}}$, and the energy ratios $$R=\frac{\mu_{ij}}{2}\frac{|u'|}{E_{ij}},\qquad r=\frac{I^\prime}{(1-R)E_{ij}}.$$  Thus, under these notations, the collision operator $Q_{ij}$ can be rewritten as:
\begin{align*}
	Q_{ij}(F,G)&=\left\{
	\begin{array}{rl}
		&\dis\int_{\R^3\times \S^2} B_{ij}\left(F_i^\prime G_{j*}^\prime-F_iG_{j*}\right)dv_*d\si,\qquad i \ ,\ j\leq p, 
		\\ [1.5em]
		&\dis\int_{\R^3\times \R_+\times [0,1]\times \S^2} B_{ij}\left(\frac{F_i^\prime G^\prime_{j*}}{( I^\prime_*)^{\de_j/2-1}}-\frac{F_iG_{j*}}{( I_*)^{\de_j/2-1}}\right)\\[1.5em]
		&\hspace{3cm}\times(1-R)^{\de_j/2-1}R^{1/2}(I_*)^{\de_j/2-1}dv_*dI_* dR d\si,\qquad i\leq p\ , \ j>p,\\[1.5em]
		&\dis\int_{\R^3\times [0,1]\times \S^2} B_{ij}\left(\frac{F_i^\prime G^\prime_{j*}}{(I^\prime)^{\de_i/2-1}}-\frac{F_iG_{j*}}{(I)^{\de_i/2-1}}\right)(1-R)^{\de_i/2-1}R^{1/2}(I)^{\de_i/2-1}dv_*dR d\si,\qquad i>p\ ,\ j\leq p,\\[1.5em]
		&\dis\int_{\R^3\times \R_+\times [0,1]^2\times \S^2} B_{ij}\left(\frac{F_i^\prime G^\prime_{j*}}{(I^\prime)^{\de_i/2-1}( I^\prime_*)^{\de_j/2-1}}-\frac{F_iG_{j*}}{(I)^{\de_i/2-1}(I_*)^{\de_j/2-1}}\right)r^{\de_i/2-1}(1-r)^{\de_j/2-1}\\[1.5em]
		&\qquad\dis\qquad\qquad\qquad\times (1-R)^{(\de_i+\de_j)/2-1}R^{1/2}(I)^{\de_i/2-1}(I_*)^{\de_j/2-1}dv_*dI_* dR dr d\si,\qquad i\ ,\ j>p,
	\end{array} \right.\notag\\\\
&=:Q^+_{ij}(F,G)-Q^-_{ij}(F,G).
\end{align*}
We also remark that condition \eqref{si} is equivalent to 
\begin{align*}
	\frac{1}{C}(\frac{\mu_{ij}}{2}|u|^2+I\chi_{\{i>p\}}+I_*\chi_{\{j>p\}})^{\frac{1-\eta}{2}}\leq B_{ij}\leq C(\frac{\mu_{ij}}{2}|u|^2+I\chi_{\{i>p\}}+I_*\chi_{\{j>p\}})^{\frac{1-\eta}{2}}.
\end{align*}
Hence, we see that \eqref{si} is equivalent to the hard potential assumption. The global thermal equilibrium $M$ is such that $Q(M,M)\equiv 0$ and it is given by
\begin{align*}
	M=(M_1,\cdots,M_n)^T,
\end{align*}
where $$M_i=\frac{c_i}{(2\pi)^{3/2}}e^{-m_i|v|^2/2}\quad \text{for}\ i\leq p,\quad M_i=\frac{c_iI^{\de_i/2-1}}{(2\pi)^{3/2}\Ga(\de_i/2)}e^{-m_i|v|^2/2-I}\quad \text{for}\ i>p,$$ and $\Ga(s)$ is the Gamma function defined by $\Ga(s)=\int_0^\infty x^{s-1}e^{-x}dx$. The collision operator $Q$ has $n+1$ collision invariants $\{e_1,\cdots,e_n,mv,m|v|^2+2\mathbb{I}\}$, $e_i=(\de_{ji})_{1\leq j\leq n}$ with $\de_{ji}$ being the Kronecker delta, $m=(m_1,\cdots,m_n)^T$, and $\mathbb{I}=(0,\cdots,0,I,\cdots,I)^T$, which leads to the conservation laws
\begin{align}
	&\int (F_i(t,x,v)-M_i(v))dxdv=M_{i0},\qquad i\leq p,\label{M1}\\
	&\int (F_i(t,x,v,I)-M_i(v,I))dxdvdI=M_{i0},\qquad i> p,\label{M2}\\
	&\sum_{i=1}^p\int m_i(F_i(t,x,v)-M_i(v))vdxdv+\sum_{i=p+1}^n\int m_i(F_i(t,x,v,I)-M_i(v,I))vdxdvdI=J_0,\label{J}\\
	&\sum_{i=1}^p\int m_i(F_i(t,x,v)-M_i(v))|v|^2dxdv+\sum_{i=p+1}^n\int m_i(F_i(t,x,v,I)-M_i(v,I))(|v|^2-2I)dxdvdI=E_0\label{E}.
\end{align}
Denoting $M^{1/2}=(M^{1/2}_1,\cdots,M^{1/2}_n)^T$ and $M^{1/2}f=(M^{1/2}_1 f_1,\cdots,M^{1/2}_n f_n)^T$, we define the perturbation near the equilibrium state to be  
\begin{align}\label{perturb}
	F=M+M^{1/2}f.
\end{align}
Substituting \eqref{perturb} into \eqref{BE}, we can deduce the equation for the perturbation $f$,
\begin{eqnarray}\label{PBE}
\pa_tf+v\cdot \na_x f+L f=\Ga (f,f),   \quad &\dis f(t=0)=f_0,
\end{eqnarray}
where $f=(f_1,\cdots, f_n)^T$ and $f_0=(f_{10},\cdots, f_{n0})^T$.
The operators $L=(L_1,\cdots,L_n)$ and $\Ga=(\Ga_1,\cdots,\Ga_n)$ in equation \eqref{PBE} are given by 
\begin{align}\label{DefLGa}
	L_if:=-M_i^{-1/2}[Q_i(M,M^{1/2}f)+Q_i(M^{1/2}f,M)],\quad \Ga_i:=M_i^{-1/2}Q_i(M^{1/2}f,M^{1/2}f).
\end{align}
From \eqref{Qi} and the explicit form of $Q_{ij}$, linearised operator $L_i$ can be written as
\begin{align}\label{reLi}
	L_if=\nu_if-K_if=\nu_if-\sum^n_{j=1}K_{ij}f=\nu_if-\sum^n_{j=1}(K^2_{ij}+K^3_{ij}-K^1_{ij})f,
\end{align}
with 
\begin{align}\label{Defnu}
	\nu_i=\left\{
	\begin{array}{rl}
		&\dis\sum^m_{j=1}\int_{(\R^3)^3} W_{ij}M_{j*}dv_*dv^\prime dv^\prime_*+\sum^n_{j=p+1}\int_{(\R^3)^3\times (\R_+)^2} W_{ij}\frac{M_{j*}}{(I_*)^{\de_j/2-1}}dv_*dv^\prime dv^\prime_*dI_*dI'_*,\quad i\leq p,\\[1.5em]
		&\dis\sum^m_{j=1}\int_{(\R^3)^3\times \R_+} W_{ij}\frac{M_{j*}}{(I)^{\de_i/2-1}}dv_*dv^\prime dv^\prime_*dI^\prime \\[1.5em]
		&\hspace{2.5cm}+ \sum^n_{j=p+1}\int_{(\R^3)^3\times (\R_+)^3} W_{ij}\frac{M_{j*}}{(I)^{\de_i/2-1}(I_*)^{\de_j/2-1}}dv_*dv^\prime dv^\prime_* dI_*dI^\prime dI^\prime_*,\quad i> p,
		\end{array}\right.
\end{align}
and 
\begin{align}\label{DefKij}
K_{ij}f&=\left\{
\begin{array}{rl}
	&\dis\int_{(\R^3)^3}(M_{j*}M'_iM'_{j*})^{1/2}\big(\frac{f'_{j*}}{(M'_{j*})^{1/2}}+\frac{f'_i}{(M'_i)^{1/2}}-\frac{f_{j*}}{(M_{j*})^{1/2}}\big) W_{ij}dv_*dv^\prime dv^\prime_*,\ i,j\leq p,\notag\\[1.5em]
	&\dis\int_{(\R^3)^3\times (\R_+)^2}\frac{ (M_{j*}M'_iM'_{j*})^{1/2}}{(I_*I'_*)^{\de_j/4-1/2}}\big(\frac{f'_{j*}}{(M'_{j*})^{1/2}}+\frac{f'_i}{(M'_i)^{1/2}}-\frac{f_{j*}}{(M_{j*})^{1/2}}\big) W_{ij}dv_*dv^\prime dv^\prime_*dI_*dI'_*,\ i\leq p,\ j>p,\\[1.5em]
	&\dis\int_{(\R^3)^3\times \R_+} \frac{ (M_{j*}M'_iM'_{j*})^{1/2}}{(II')^{\de_i/4-1/2}}\big(\frac{f'_{j*}}{(M'_{j*})^{1/2}}+\frac{f'_i}{(M'_i)^{1/2}}-\frac{f_{j*}}{(M_{j*})^{1/2}}\big) W_{ij}dv_*dv^\prime dv^\prime_*dI^\prime,\ i> p,\ j\leq p,\notag\\[1.5em]
	&\dis\int_{(\R^3)^3\times (\R_+)^3} \frac{ (M_{j*}M'_iM'_{j*})^{1/2}}{(II')^{\de_i/4-1/2}(I_*I'_*)^{\de_j/4-1/2}}\big(\frac{f'_{j*}}{(M'_{j*})^{1/2}}+\frac{f'_i}{(M'_i)^{1/2}}-\frac{f_{j*}}{(M_{j*})^{1/2}}\big) W_{ij}dv_*dv^\prime dv^\prime_* dI_*dI^\prime dI^\prime_*,\ i,j> p
\end{array}\right.\notag\\[1.5em]
&=: K^2_{ij}f+K^3_{ij}f-K^1_{ij}f.
\end{align}
 The linearized operator $L$ is symmetric and nonnegative, and an orthogonal basis of $\ker L$ is given by
 \begin{align}\label{Defphi}
&\left\{	\begin{pmatrix}
 		\sqrt{M_1}\\
 		0
 		\\
 		\vdots
 		\\
 		0
 	\end{pmatrix},
 \begin{pmatrix}
 	0
 	\\
 	\vdots
 	\\
 	0\\
 	\sqrt{M_n}
 \end{pmatrix}, \begin{pmatrix}
 m_1\sqrt{M_1}\\
  m_2\sqrt{M_2}
 \\
 \vdots\\
  m_{n-1}\sqrt{M_{n-1}}
 \\
 m_n\sqrt{M_n},
\end{pmatrix}v,\begin{pmatrix}
(m_1|v|^2-3)\sqrt{M_1}
\\
\vdots\\
(m_p|v|^2-3)\sqrt{M_1}\\
(m_{p+1}|v|^2-3-2I-\de_{p+1})\sqrt{M_{p+1}}\\
\vdots
\\
(m_n|v|^2-3-2I-\de_n)\sqrt{M_n}
\end{pmatrix}\right\},\notag\\[1.5em]
&=\{\phi_1,\cdots,\phi_{n+4}\}.
 \end{align}
Consider also the weights $w_\be=(w_{1\be},\cdots,w_{n\be})$ where $w_{i\be}=(1+|v|)^\be$ for $1\leq i\leq p$ and $w_{i\be}=(1+|v|+\sqrt{I})^\be$ for $p+1\leq i\leq n$. For any function $g=g(x,v,I)$ and $1\leq q ,  q'\leq\infty$, the mixed $L^p$ norms are defined by 
\begin{align*}
	\|g\|_{\infty}:=\|g\|_{L^\infty_{x,v,I}},\quad \|g\|_{L^\infty_{v,I}L^q_x}:=\sup_{v\in\R^3,I\in\R_+}\|g(v,I)\|_{L^q_x},\quad \|g\|_{L^\infty_{v,I}(L^q_x\cap L^{q'}_x)}:=\|g\|_{L^\infty_{v,I}L^q_x}+\|g\|_{L^\infty_{v,I}L^{q'}_x}.
\end{align*}
If $g$ depends only on $(x,v)$, that is $g=g(x,v)$, we define the norms as above since the $\sup_{I\in\R_+}$ can be ignored.   After this long introduction we can state our main result.
\begin{theorem}\label{global}
	Let $\be>6$, $(t,x,v,I)\in \R_+\times \Omega \times \R^3\times \R_+$, with $\Omega=\R^3$ or $\T^3$, and with initial data satisfying $F_{i0}=M_i+\sqrt{M_i}f_{i0}\geq0$ for $1\leq i\leq n$. Then, there exists a constant $\eps>0$ such that if
	$$
	\sum^n_{i=1}\left\|w_{i\be} f_{i0}\right\|_{L^\infty_{v,I}(L^1_x\cap L^\infty_x)}\leq \eps, 
	$$
	the Boltzmann equation \eqref{BE} has a unique global mild solution $F=M+\sqrt{M}f$ with $F_i\geq0$ for $1\leq i\leq n$. Moreover, if $\Omega = \R^3$, then there exists a constant $C>0$ such that
	\begin{align}\label{GE}
		\sum^n_{i=1}\left\|w_{i\be} f_i(t)\right\|_{L^\infty_{v,I}(L^2_x\cap L^\infty_x)}\leq C(1+t)^{-\frac{3}{4}}\sum^n_{i=1}\left\|w_{i\be} f_{i0}\right\|_{L^\infty_{v,I}(L^1_x\cap L^\infty_x)},
	\end{align}
and if $\Omega = \T^3$ with $(M_{i0}, J_0, E_0) = (0,0,0),$ $i=1,\cdots n$, where $M_{i0}, J_0, E_0$ are defined in \eqref{M1}, \eqref{M2}, \eqref{J} and \eqref{E}, then there are constants $C,\la>0$ such that
	\begin{align}\label{GET}
	\sum^n_{i=1}\left\|w_{i\be} f_i(t)\right\|_{\infty}\leq Ce^{-\la t}\sum^n_{i=1}\left\|w_{i\be} f_{i0}\right\|_{\infty}.
\end{align}
\end{theorem}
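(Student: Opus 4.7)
The proof follows the classical $L^2$-$L^\infty$ decomposition strategy of Guo, adapted to the mixed mono-polyatomic setting, taking as inputs the linear estimates developed in the preceding sections: the spectral gap for $L$ on $(\ker L)^\perp$, pointwise decay bounds for the kernels of $K_{ij}^\ell$ against the weights $w_{i\beta}$, and the bilinear estimate $\|w_\beta \Gamma(f,g)\|_\infty \lesssim \nu\,\|w_\beta f\|_\infty\|w_\beta g\|_\infty$. The analytic object of study is the mild formulation along straight characteristics,
\begin{equation*}
f_i(t,x,Z) = e^{-\nu_i t}\,f_{i0}(x-vt,Z) + \int_0^t e^{-\nu_i(t-s)}\bigl[K_i f + \Gamma_i(f,f)\bigr](s,x-v(t-s),Z)\,ds,
\end{equation*}
with $Z=v$ for $i\leq p$ and $Z=(v,I)$ for $i>p$. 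Local existence and nonnegativity $F_i\geq 0$ follow from a standard monotone iteration exploiting the nonnegativity of $Q^+_{ij}$ and the dissipative loss $-\nu_i F_i$; the main work is the global-in-time estimate.

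I would first establish linear semigroup decay for $U(t)=e^{tB}$, $B=-v\cdot\nabla_x-L$, in $L^2_{x,v,I}$. On $\T^3$, the hypothesis $(M_{i0},J_0,E_0)=0$ forces the zero Fourier mode of $Pf$ to vanish for all time, so a micro-macro decomposition combined with the spectral gap produces exponential $L^2$ decay. On $\R^3$, taking the spatial Fourier transform yields a $k$-parameterised family of operators $\hat B(k)=-ik\cdot v-L$; low-frequency perturbation of the $k=0$ eigenprojectors exposes the $(n+4)$-dimensional perturbed Euler-type hydrodynamic block alluded to in the abstract, whose parabolic dispersion relation near $k=0$ yields the diffusive rate $(1+t)^{-3/4}$ when interpolated against $L^1_x$ data. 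The spectral gap of $L$ handles the microscopic complement and the high-frequency regime exponentially in $k$.

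To promote $L^2$ decay to weighted $L^\infty$ decay I would iterate the mild formula once inside the $K_i f$ term. Splitting the inner time integral into a short interval $[t-\kappa,t]$, which contributes a small factor $O(\kappa)\sup_s\|w_\beta f(s)\|_\infty$, and its complement, on which a careful change of variables in $(v_*,\sigma)$ and, for polyatomic collisions, in $(R,r,I_*,I'_*)$, extracts an $L^2_{x,v,I}$ norm of $f$ via the pointwise bounds for the kernel of $K$, produces a closed inequality for $\|w_\beta f(t)\|_\infty$ driven by the $L^2$ decay of the previous step. The nonlinear term is absorbed by the bilinear estimate, and a standard continuation-and-contraction argument provides global existence, uniqueness, and the claimed decay rates once $\varepsilon$ is small enough.

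The principal obstacle is the double-Duhamel step in the simultaneous presence of the internal-energy variable $I$ and the mass mismatch $m_i\neq m_j$. The change of variables carrying $(v_*,\sigma)\mapsto v'$ in the classical monospecies scheme must be replaced by variants that simultaneously handle the Borgnakke-Larsen factors $(1-R)^{(\delta_i+\delta_j)/2-1}R^{1/2}$ and the corresponding partition functions, the mixed weights $w_{i\beta}=(1+|v|+\sqrt{I})^\beta$, and the loss of the Maxwellian symmetry that $m_i=m_j$ would otherwise provide. Obtaining Jacobian bounds strong enough to absorb these factors without destroying the polynomial weight, uniformly across all four collision types, and simultaneously keeping the low-frequency spectral analysis compatible with the polyatomic macroscopic projection, is the most delicate part of the argument.
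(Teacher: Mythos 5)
Your overall architecture matches the paper: a mild formulation along characteristics, an $L^{2}$ time-decay estimate for the linearized flow, a double Duhamel iteration to promote $L^{2}$ decay to weighted $L^{\infty}$ decay (splitting short time and using a change of variables to expose an $L^{2}$ norm), and a bilinear bound on $\Gamma$ plus a smallness-and-continuation argument to close the nonlinear problem. The one place where you take a genuinely different route is the $L^{2}$ decay step. You propose the spectral (Ellis--Pinsky/Ukai style) approach: Fourier transform in $x$, low-frequency perturbation of the eigenprojectors of $\hat B(k)=-ik\cdot v-L$ around $k=0$, and the parabolic dispersion relation of the $(n+4)$-dimensional hydrodynamic block. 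The paper instead uses Duan's hypocoercivity/macroscopic-energy method: it projects onto $\ker L$ to obtain the system for $(\hat a_i,\hat b,\hat c)$ in \eqref{abcf}, constructs an interaction functional $\mathcal{E}^{\rm int}(\hat f)$ from the higher moments in \eqref{eqhighmo}, and derives the Lyapunov inequality \eqref{estCE}, which gives decay at rate $e^{-\lambda\frac{|k|^{2}}{1+|k|^{2}}t}$ uniformly in $k$. Both routes give the $(1+t)^{-3/4}$ rate on $\R^{3}$ and exponential decay on $\T^{3}$. The spectral route is sharper in the sense that it isolates the hydrodynamic eigenvalues and could in principle give first-order asymptotics, but it requires a careful analyticity/continuity analysis of the spectrum of $\hat B(k)$ near $k=0$, which is more delicate when $\dim\ker L=n+4$ and the species have unequal masses; the hypocoercivity route avoids any spectral perturbation theory and only needs the coercivity \eqref{proco} and the $L^{2}$-boundedness of $K$ (Lemma \ref{L2K}). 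One further small point: in \eqref{GE} the decay is measured in $L^{\infty}_{v,I}(L^{2}_{x}\cap L^{\infty}_{x})$, not in pure $L^{\infty}$; this matters because the $L^{2}_{x}$ component is what feeds back into the double-Duhamel estimate \eqref{KSGL2} — worth keeping in mind if you expand the sketch.
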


\subsection{Strategy of the proof}
The general framework to construct bounded solutions is the $L^2\cap L^\infty$ argument, an important method to study the classical Boltzmann equation for a single monatomic gas in either the whole space or in a bounded domain, presented in the works \cite{Guo,UY}.  The method requires certain decay properties on the linearized Boltzmann equation and a $L^2$-time decay on the linearized equation which correspond to Lemma \ref{leK}-Lemma \ref{leKmm} and Proposition \ref{leL2} respectively. To perform such approach in the polyatomic-monatomic gas mixture, after double Duhamel iteration on solutions to the linearized equation, the linear $L^\infty$ bound is separated in two key components, one is small by the decay in the kinetic variables $(v,I)$ of the linearized Boltzmann operator, the other is in an integral form over some bounded domain and can be transformed into a $L^2$ integral which is controlled by the aforementioned $L^2$ time decay.  After these estimations, we apply the semigroup generated by the solution operator for the linear equation and a $L^\infty$ bound of nonlinear operator to get the nonlinear $L^\infty$ time decay, this process is carefully presented in Section \ref{Section5}.  Noticing that we consider a mixture of polyatomic and monatomic gases with different masses, the decay of the linearized operator is presented not only in velocity but also in the internal energy variable.  In addition, the dissimilar particles' masses introduces a lack of symmetry which presents a technical difficulty relative to the classical Boltzmann equation case. 

Let us explain these difficulties and our strategy more precisely:  A first difficulty is caused by the extra internal energy variable in gaseous mixture, especially when two particles have different species and masses. The key part to estimate the operator $K_{ij}$, given in Lemma \ref{leKij}, is determined by the gain operators $Q^+_{ij}(M,\sqrt{M}f)$ and $Q^+_{ij}(\sqrt{M}f,M)$. In the case where particles have different masses, we no longer have the symmetry property for the collision kernel $W_{ij}$ defined in \eqref{DefW} as in the classical Boltzmann equation where it holds that $$W_{ii}(v,v_*|v',v'_*)=W_{ii}(v',v'_*|v_*,v)=W_{ii}(v,v_*|v'_*,v'),$$ which can be seen from \eqref{ProW1} to \eqref{ProW3} and allows us to bound both operators by only $Q^+_{ij}(\sqrt{M}f,M)$ with a fine Grad's estimate as in \cite{Grad,Guo}.  We discuss these two cases with different structures separately and prove the decay in both $v$ and $I$ for each of them. In particular, we explain how the extra internal energy variable causes exponential increase in the kernel function $k^2_{ij}$, defined in \eqref{rek2ij}, and induced by $Q^+_{ij}(M,\sqrt{M}f)$, which is bounded by
$$
\int \exp[-C|\frac{m_i}{m_i-m_j}v-\frac{m_j}{m_i-m_j}v_*|^2-C(|v-v_*|^2-C_1(m_i,m_j)\widehat{\De} I)]\hat{\si}\times\cdots,
$$
from \eqref{boundk21}.
In the monatomic mixture, by ignoring the additional relative energy $\widehat{\De} I=I'-I+I_*-I'_*$, the positivity of the relative velocities $|\frac{m_i}{m_i-m_j}v-\frac{m_j}{m_i-m_j}v_*|^2+|v-v_*|^2$ should be enough to establish the decay in $v$, see \cite{BGPS,BG}. However, in the aforementioned integral, due to the appearance of $\widehat{\De}I$ and possible positive $C_1(m_i,m_j)$, even with the positivity of relative velocities it remains a problem to control the possible exponential increase $\exp[C_1\widehat{\De}I]$ and obtain the decay in the internal energy variable $I$. We develop a velocity-internal energy splitting approach to consider the relation between pre/post-collision internal energy and velocity variables in several cases such that if $\widehat{\De} I$ is small relative to $|v|$, the positivity of relative velocities still holds under a small perturbation of $\widehat{\De} I$. Moreover, the decay of $I$ is provided by the cross section $\hat{\si}$. In the case that the internal energy variable is large relative to $|v|$, the decay in both $v$ and $I$ are both obtained from $\hat{\si}$, so that the remainder part of the integral shown above is bounded as
$$
	\int \exp[-C(|v-v_*|^2-C_1(m_i,m_j)\widehat{\De} I)]dv_*\leq C,
	$$
	using $|v-v_*|^2-C_1\widehat{\De} I\sim |v'-v'_*|\geq 0$ from the conservation of energy. The detailed calculations of such arguments for $k^2_{ij}$ are given for the six cases under \eqref{boundk2} for poly-poly interaction, see also the discussions under \eqref{kpm2} for poly-mono interaction.

We note that the internal energy variable and the different masses also lead to the reconstruction of weight functions for each $K^{1,2,3}_{ij}$ in all four types of collisions. Thus, it is required to estimate all 12 cases mentioned above respectively. In the double Duhamel iteration method for either single monatomic or polyatomic gas,  we should expect the kernels $k^{1,2,3}_{ij}$ to absorb some ratio of polynomial weight $w_{\be}$ in $(v,I)$ and the exponential increase on the relative velocity of $v$ and $v_*$ such that
	$$
	\int \exp[\de|v-v_*|^2]\frac{w_\be(v,I)}{w_\be(v_*,I_*)}k^{1,2,3}(v_*,I_*) \rightarrow 0,\quad \text{when}\quad v,I\rightarrow\infty,
	$$
as in Lemma 7 in \cite{Guo} and Lemma 2.2 in \cite{DL}. In the mixture of monatomic gases, even with different masses, we can directly assign $\exp[\de|v-v_*|^2]$ on all $k^{1,2,3}_{ij}$ with an small $\de$ since the kernel function provides exponential decay on $|v-v_*|$. However, we see from the bound of $k^2_{ij}$ in previous paragraph that only the exponential decay in $|v-v_*|^2-C_1(m_i,m_j)\widehat{\De} I$ is stablished, which could produce exponential growth if we add any extra exponential weight on $|v-v_*|^2$ as $(1-\de)|v-v_*|^2-C_1(m_i,m_j)\widehat{\De} I$ could be negative for any $\de>0$.  Hence, we select the weight $$\exp[\de|\frac{m_i}{m_i-m_j}v-\frac{m_j}{m_i-m_j}v_*|^2]$$ on $k^2_{ij}$ such that it can be absorbed by the exponential decay on $|\frac{m_i}{m_i-m_j}v-\frac{m_j}{m_i-m_j}v_*|$. One can refer to \eqref{kpp}, \eqref{k2pp}, \eqref{k3pp} where the arguments are presented. For the polynomial ratio in the above integral, it can be seen from \eqref{boundk} that for $k^3_{ij}$ and $k^{1,2}_{ij}$ we should choose $\frac{w_{i\be}(Z)}{w_{j\be}(Z_*)}$ and $\frac{w_{i\be}(Z)}{w_{i\be}(Z_*)}$ respectively since the index of $f_i$ changes for $K^3_{ij}$ and $K^{1,2}_{ij}$ as in \eqref{reKij} where the spaces of $Z,Z_*$ defined in \eqref{DefZ} will also change corresponding to the polynomial index which describes either the $i^{th}$ or $j^{th}$ species of particles.  In conclusion, all 12 types of linear operators behave differently with different weight functions and variables. Moreover, the Duhamel iteration deduced in \eqref{hi} requires us to track one more collision, which indicates we are dealing with the complexity of eight types for double collisions, that is, Poly/Mono-Poly/Mono-Poly/Mono collisions, as well as the behaviors of different kernel functions $k^{1,2,3}_{ij}$ for each collision. 

Additionally, the polyatomic cross section $\si_{ij}$ defined in \eqref{si} is more singular compared to the monatomic case in terms of $|v-v_*|^2+I+I_*$ with the power $-(\eta+\de_i+\de_j)/2$.  Indeed, in the monatomic case where we ignore the internal energy $\de_i$ and $\de_j$ such that $\si_{ij}\sim |v-v_*|^{-\eta}$, the singularity for small relative velocity also vanishes after integration on $v_*$ for both hard and soft potentials with some $\eta$ that is not too large.  Refer to the proof of Lemma \ref{leKmm} for details. On the other hand, this singularity is useful to obtain the decay in $I$ for large internal energy, see \eqref{boundtildeE} and \eqref{boundI}. Hence, it is delicate to control the cross section in the various cases of linearized operators and collision types. The general principle of the analysis is that when certain internal energy variables are small, we use the conservation of energy $|v-v_*|^2+I+I_*\sim |v'-v'_*|^2+I'+I'_*$ to split the singularities to other relative velocities and internal energies such that the small variables are integrable.   And when some internal energy variables are large, we preserve those singularities to obtain the decay as well as the integrability around infinity as in \eqref{boundtildeE} where both $I$ and $I_*$ decay fast when they are large and are integrable when they are close to zero. However, in case of poly-mono interaction, such decay in internal energy may not be integrable for small or large values; for example, the decay in pre-collision internal energy is of order $-1$ for the last term in \eqref{controltildeEpm}.  In order to handle all these cases, we establish an approach by a series of changes of variables and comparisons between $I$ and $v$ to trade decay of velocity, integrability, and decay of internal energies.  This is the reason that the decay in $v$ obtained from the linearized operator is $O(|v|^{-1/2})$, such as in \eqref{k2pp} and \eqref{boundJ1}, rather than $O(|v|^{-1})$ for the monatomic gases in \cite{Guo,Grad}. 

Let us mention that the analysis includes the whole space and the torus for the spatial domain. Due to the algebraic slow decay when the domain is the whole space, we use the Fourier transform to study the hypocoercivity property on the degenerate dissipative part of the equation. Such approach reveals the explicit decay in time and frequency and allows us to get the optimal decay in the sense of $L^2$.  This hypocoercivity method on Boltzmann equation has been used in \cite{Duan,Duan-Stain1,Duan-Stain2}. In addition, later in \eqref{eqabc}, we derive the system of the $n+4$ macroscopic quantities $a_i$, $b$ and $c$, which correspond to the perturbed macroscopic densities, bulk velocity and temperature respectively.  Although the formal Euler or Navier-Stokes type systems are derived from polyatomic-monatomic gaseous mixture in \cite{BBG,Shahine}, the rigorous justification of hydrodynamic limit remains open for either such gaseous mixture or, even, single polyatomic gases. The $a_i,b,c$ system derived in this paper shows the structure of the perturbed Euler type equation for macroscopic quantities of the solution near the steady state, which gives a potential application to fluid limit problems in a future work.

\subsection{Additional related literature}
In recent years the mathematical study of gaseous mixtures became an active area due to its multiple applications.  At the kinetic theory level, Boltzmann based models for gas mixtures are discussed in \cite{BBBD,BBG,BGPS}. The construction of solutions and integrability propagation for spatial homogeneous system are proved in \cite{ACG,AC}, with the well-posedness of the purely monoatomic case discussed in \cite{GP}. In the perturbation near equilibrium framework for inhomogeneous systems, the compactness and Fredholm properties of the linearized Boltzmann operator are studied in \cite{Bernhoff} from a probabilistic formulation of the collision operator and \cite{BST,Shahine} based on the Borgnakke-Larsen process. However, as far as we know, there are few results on the well-posedness theory.

Additional literature for purely monoatomic setting includes \cite{BBBG,Briant} for properties of linearized collision operator, \cite{BG} for the spatially inhomogeneous well-posedness on the torus close to thermal equilibrium, and \cite{BGPCS} for the one dimensional case.

In the polyatomic setting, where the gas is described with an additional internal energy variable,  there is extensive work in case of a single polyatmoic agent.  We can refer to \cite{BL,Desvillettes,DMS} where the mathematical models are calculated with the Borgnakke-Larsen approach. We mention \cite{Bernhoff1,Bernhoff2,BST1,BST2} for the properties on the linearized operator, and \cite{GP1,DL,KS} for constructions of solutions in the spatially homogeneous and the spatially inhomogeneous close to thermal equilibrium settings.

\subsection{Notations}
The proofs in this paper involve several notations on mixed $L^p$ norms for both scalar and vector valued functions.  First, the $L^2$ norm on $v,I$ is defined by \begin{align*}
	\|f\|^2_{v,I}=\|f\|^2_{L^2_{v,I}}:=\left\{
	\begin{array}{rl}
		&\int_{\R^3}|f(v)|^2dv,\quad \textit{if}\quad
		f=f(v),\\[.5em]
		&\int_{\R^3\times\R_+} |f(v,I)|^2dvdI,\quad \textit{if}\quad  f=f(v,I).
	\end{array}\right.
\end{align*}
If $f=(f_1,\cdots,f_n)$ is a vector valued function, then
\begin{align*}
	\|f\|^2_{v,I}:=\sum^n_{i=1}\|f_i\|^2_{L^2_{v,I}}.
\end{align*}
Furthermore, if $f$ depends additionally on $y\in\Omega$ such that $f=(f_1(y,v),\cdots,f_p(y,v),f_{p+1}(y,v,I),\cdots,f_n(y,v,I))$, then for $1\leq q,q'\leq \infty$ we denote
\begin{align*}
	\|f\|^2=&\|f\|^2_{L^2_{y,v,I}}:=\sum^n_{i=1}\int_{\Omega}\|f_i(y)\|^2_{L^2_{v,I}}dy,\\
\|f\|^2_{L^2_{v,I}L^q_{y}}:=&\sum^p_{i=1}\int_{\R^3}\big|\int_{\Omega}|f_i(y,v)|^qdy\big|^\frac{2}{q}dv+\sum^n_{i=p+1}\int_{\R^3\times\R_+}\big|\int_{\Omega}|f_i(y,v,I)|^qdy\big|^\frac{2}{q}dvdI,
\end{align*}
and
\begin{align*}
	\|f\|_{L^2_{v,I}(L^q_{y}\cap L^{q'}_{y})}:=\|f\|_{L^2_{v,I}L^q_{y}}+\|f\|_{L^2_{v,I} L^{q'}_{y}}.
\end{align*}

In addition, we use $\langle\cdot,\cdot\rangle$ to denote the inner product in $\C^n$ whereas the $L^2$ inner product is given by $(\cdot,\cdot)$.

\section{Proof of the Main Theorem}
The proof for Theorem \ref{global} mainly depends on five important technical lemmas which will be proved in the next section.  In this section we describe the strategy and proof of the main theorem by introducing these lemmas and showing how they are used along the the proof.  As a first step, it is crucial to control the linearized operators $K_{ij}$.
\begin{lemma}\label{leKij}
For $1\leq i,j\leq n,$ the linearized operator $K_{ij}$ defined in \eqref{DefKij} can be written as
\begin{align}\label{reKij}
	K_{ij}f=
	&\dis\int_{\CZ_j}(k^2_{ij}(Z,Z_*)-k^1_{ij}(Z,Z_*))f_j(Z_*)dZ_*+\int_{\CZ_i}k^3_{ij}(Z,Z_{*})f_i(Z_*)dZ_*,
\end{align}	
with
\begin{align}\label{boundk}
	\int_{\CZ_j}(|k^1_{ij}(Z,Z_*)|+|k^2_{ij}(Z,Z_*)|)\frac{w_{i\be}(Z)}{w_{j\be}(Z_*)}dZ_*+\int_{\CZ_i}|k^3_{ij}(Z,Z_{*})|\frac{w_{i\be}(Z)}{w_{i\be}(Z_*)}dZ_*\leq C,
\end{align}
for any $\be\geq 0$ where $C>0$ depends only on $\de_i$, $m_i$ and $\eta$.
\end{lemma}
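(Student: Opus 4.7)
The plan is to derive explicit kernel representations for each of the four interaction types in \eqref{DefKij} and then verify the weighted integrability by decomposing into size regimes of $(Z, Z_*)$. Starting from $K_{ij} = K^2_{ij} + K^3_{ij} - K^1_{ij}$, I would use the momentum and energy Dirac deltas $\bm{\de}_3\,\bm{\de}_1$ contained in $W_{ij}$ from \eqref{DefW} to integrate out the constrained variables. For the loss term $K^1_{ij}$ (the $f_{j*}$ contribution) no change of variables is needed, and the delta integration reduces $k^1_{ij}(Z, Z_*)$ to a Maxwellian factor $\sqrt{M_{j*}}$ times the cross-section factor, integrated when applicable over the Borgnakke--Larsen energy partition. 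For the gain terms $K^2_{ij}$ (the $f'_{j*}$ contribution, coming from $Q^+_{ij}(M,\sqrt{M}f)$) and $K^3_{ij}$ (the $f'_i$ contribution, coming from $Q^+_{ij}(\sqrt{M}f,M)$), I would perform a Carleman-type change of variables sending $Z'_*$ or $Z'$ to the new integration variable $Z_*$; the remaining Dirac deltas localize the integral to a lower-dimensional manifold and yield the explicit formulas for $k^2_{ij}$ and $k^3_{ij}$.

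The pointwise analysis of the kernels reduces to evaluating the Maxwellian combination $(M_{j*}M'_i M'_{j*})^{1/2}$ on the collisional manifold. Conservation of momentum and energy organizes the exponent into a quadratic form in $(v,v_*)$ plus a linear contribution in the internal energies. Completing the square, $k^3_{ij}$ produces the usual positive-definite decay of the form $\exp[-c(|v-v_*|^2 + I + I_*)]$, while $k^2_{ij}$ in the case $m_i \neq m_j$ yields the mass-weighted combination
\begin{equation*}
\exp\Bigl[-c \Bigl|\tfrac{m_i}{m_i - m_j}\, v - \tfrac{m_j}{m_i - m_j}\, v_* \Bigr|^2 - c\bigl(|v-v_*|^2 - C_1(m_i,m_j)\,\widehat{\De} I\bigr)\Bigr],
\end{equation*}
where $\widehat{\De}I = (I' - I) + (I_* - I'_*)$. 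The polynomial growth coming from the cross-section factor $E_{ij}^{(\eta+\de_i\chi_{\{i>p\}}+\de_j\chi_{\{j>p\}})/2}$ is absorbed by the Gaussian, and the $|u|^{-1}$ singularity is integrable on the collisional manifold after passing to spherical coordinates.

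To close the weighted estimate \eqref{boundk}, I would transfer the polynomial ratio $w_{i\be}(Z)/w_{j\be}(Z_*)$ (for $k^{1,2}_{ij}$) or $w_{i\be}(Z)/w_{i\be}(Z_*)$ (for $k^3_{ij}$) onto the Gaussian factor, noting that the decay variables must match the weight variables. The twelve cases (three kernels over four interaction types) each require a parallel but distinct argument, handled by splitting $\CZ_j$ or $\CZ_i$ into size regimes of $|v_*|$, $I_*$, and related quantities so that either the Gaussian dominates the polynomial weight, or the cross-section singularity is integrated against the Borgnakke--Larsen measure $r^{\de/2-1}(1-r)^{\de/2-1}R^{1/2}(1-R)^{\de/2-1}$. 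The principal obstacle is the bound on $k^2_{ij}$ in the poly--poly case with $m_i \neq m_j$, where the symmetry \eqref{ProW3} fails and the term $\widehat{\De}I$ can push the naive choice of exponential weight $e^{\de|v-v_*|^2}$ into positive exponents. To handle this I plan a velocity--internal-energy splitting: on the region where $\widehat{\De}I$ is small relative to $|v|^2$, the mass-weighted quadratic form remains coercive under small perturbation and the weight $\exp[\de |\tfrac{m_i}{m_i - m_j}\, v - \tfrac{m_j}{m_i - m_j}\, v_*|^2]$ is absorbed; on the complementary region where the internal energy is large, the required decay in both $v$ and $I$ is recovered instead from the cross-section factor $E_{ij}^{-(\eta + \de_i + \de_j)/2}$ combined with the conservation identity $|v - v_*|^2 - C_1(m_i,m_j)\,\widehat{\De}I \sim |v' - v'_*|^2 \geq 0$.
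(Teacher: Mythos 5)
Your high-level outline (integrate out the Dirac deltas via Carleman-type changes of variables, absorb the weight ratio through energy conservation, and split into cases to control the cross-section singularity) matches the overall architecture of the paper's proof, which establishes \eqref{boundk} through the four auxiliary Lemmas \ref{leK}--\ref{leKmm}. But there is a concrete error that would derail your argument: the claim that $k^3_{ij}$ has Gaussian decay $\exp[-c(|v-v_*|^2 + I + I_*)]$. After interchanging $Z_*$ and $Z'$ in \eqref{Gk3}, the only Maxwellians present are $M'_j M'_{j*}$, which carry exponential decay in $|v'|^2, |v'_*|^2, I', I'_*$ (the species-$j$ post-collision variables). The modified conservation law $m_i|v|^2/2 + m_j|v'|^2/2 + I + I' = m_i|v_*|^2/2 + m_j|v'_*|^2/2 + I_* + I'_*$ does \emph{not} force $|v'|, |v'_*|, I', I'_*$ to grow with $I$ or $I_*$; large $I$ can be compensated entirely by large $I_*$. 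So no exponential decay in $(I, I_*)$ survives. The actual decay in $I$ and $I_*$ for $k^3_{ij}$ is only polynomial and comes from the algebraic cross-section factor $\widetilde E_{ij}^{-(\eta + \delta_i\chi_{\{i>p\}} + \delta_j\chi_{\{j>p\}})/2}$, handled via \eqref{boundtildeE} and \eqref{boundI}.

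This matters because in the Poly--Mono case ($i>p$, $j\leq p$) the remaining algebraic factor is $(II_*)^{\delta_i/4-1/2}\,\widetilde E_{ij}^{-\delta_i/2}$ with $\widetilde E_{ij} = \mu_{ij}|\tilde u|^2/2 + I$. If you try to allocate the power $\widetilde E_{ij}^{-\delta_i/2}$ purely among $(I)^{-\psi}(I_*)^{-(\delta_i/2-\psi)}$, the $I_*$-integral diverges for \emph{every} choice of $\psi \in [0, \delta_i/2]$: the exponent on $I_*$ is then $\delta_i/4 - 1/2 - (\delta_i/2 - \psi) = \psi - \delta_i/4 - 1/2$, which makes the integrand scale like $I_*^{-1}$ near $\infty$ at best, and there is no exponential to rescue it. The paper resolves this by trading part of the cross-section decay into the $|u|$-variable (so $\widetilde E_{ij}^{-\delta_i/2}$ contributes a $|u|$-singularity that is integrated via the change $dI_* \propto |u|\,d\Phi$ near \eqref{k3smallI*}), which is exactly why the lemmas only yield the weaker $O(|v|^{-1/2})$ decay rather than the monatomic $O(|v|^{-1})$. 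Your proposal does not contain this step, and without it the Poly--Mono estimate for $k^3_{ij}$ fails. A similar polynomial bookkeeping (rather than Gaussian absorption) is also needed in the Mono--Poly case for $k^2_{ij}$, and your plan should explicitly separate the treatment of $(I',I'_*)$ (which do carry Gaussian weight) from $(I,I_*)$ (which do not).
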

Estimate \eqref{boundk} on $K_{ij}$ is relatively coarse since in the proof of nonlinear estimates only the boundedness of $\int k^{1,2,3}(Z,Z_*)\frac{w_{i\be}(Z)}{w_{j\be}(Z_*)}dZ_*$ are needed. However, in the linear $L^\infty$ estimates, we require subtler properties of $\int k^{1,2,3}(Z,Z_*)\frac{w_{i\be}(Z)}{w_{j\be}(Z_*)}dZ_*$ including its decay on $v$ and $I$ with additional weights.  At the end of Section \ref{Section3} we give a detailed study on $K_{ij}$ containing four lemmas which imply \eqref{reKij} and \eqref{boundk}.\\

Second, we use polynomial equivalency on the collision frequency $\nu_i$ defined in \eqref{Defnu}. The proof is given in the second part of Section \ref{Section3}.
\begin{lemma}\label{lenu}
	There exist positive constants $C$ and $\nu_0$ depending on $\de_i$, $m_i$ and $\eta$ such that
	\begin{align}
		\nu_0\leq\frac{1}{C}(1+|v|+\sqrt{I}\chi_{\{i>p\}})^{1-\eta}\leq &\nu_i(v,I) \leq C(1+|v|+\sqrt{I}\chi_{\{i>p\}})^{1-\eta},\quad 1\leq i\leq n. \label{nu}
	\end{align}
\end{lemma}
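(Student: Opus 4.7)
The plan is to rewrite $\nu_i$ in the Borgnakke--Larsen form, then obtain matching upper and lower bounds by exploiting the equivalent bound
\begin{equation*}
\tfrac{1}{C}\bigl(\tfrac{\mu_{ij}}{2}|u|^2+I\chi_{\{i>p\}}+I_*\chi_{\{j>p\}}\bigr)^{\frac{1-\eta}{2}}\leq B_{ij}\leq C\bigl(\tfrac{\mu_{ij}}{2}|u|^2+I\chi_{\{i>p\}}+I_*\chi_{\{j>p\}}\bigr)^{\frac{1-\eta}{2}}
\end{equation*}
combined with the Gaussian/exponential decay of $M_{j*}$. Concretely, integrating out the Dirac masses in $W_{ij}$ via the Borgnakke--Larsen change of variables (exactly as was done to rewrite $Q_{ij}$ earlier), each of the four cases in the definition \eqref{Defnu} of $\nu_i$ reduces to
\begin{equation*}
\nu_i(v,I)=\sum_{j=1}^{n}\int B_{ij}\,M_{j*}\,\omega_{ij}(R,r,I,I_*)\,dv_*\,d\sigma\,dR\,dr\,dI_*,
\end{equation*}
where the weight $\omega_{ij}$ is a finite product of factors of the form $R^{1/2}(1-R)^{\de_j/2-1}$, $r^{\de_i/2-1}(1-r)^{\de_j/2-1}$ and powers of $I,I_*$ (with $dR,dr,dI_*$ present only when the corresponding species is polyatomic), and the associated $R,r$ integrals are finite Beta integrals depending only on $\de_i,\de_j$.

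For the \emph{upper bound}, I would use subadditivity of $x\mapsto x^{(1-\eta)/2}$ (since $0\le (1-\eta)/2<1/2$) to split
\begin{equation*}
B_{ij}\leq C\bigl(|v|^{1-\eta}+|v_*|^{1-\eta}+I^{(1-\eta)/2}\chi_{\{i>p\}}+I_*^{(1-\eta)/2}\chi_{\{j>p\}}\bigr).
\end{equation*}
The $|v|^{1-\eta}$ and $I^{(1-\eta)/2}\chi_{\{i>p\}}$ terms come out of the $(v_*,I_*,R,r,\sigma)$ integral and reproduce the claimed majorant $(1+|v|+\sqrt{I}\chi_{\{i>p\}})^{1-\eta}$, while the $|v_*|^{1-\eta}$ and $I_*^{(1-\eta)/2}$ terms are absorbed against the Gaussian/exponential tail of $M_{j*}$ and yield finite constants depending only on $m_i,m_j,\de_i,\de_j,\eta$.

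For the \emph{lower bound}, the argument splits into regimes. When $|v|$ and $I$ are both bounded (say $\le 1$), restricting the integral to a fixed compact set of $(v_*,I_*,R,r,\sigma)$ on which $B_{ij}M_{j*}$ is uniformly positive yields a constant lower bound $\nu_i\geq\nu_0>0$. When $|v|$ is large, I restrict to the set $\{|v_*|\le |v|/2\}$ so that $|v-v_*|\geq|v|/2$, giving $B_{ij}\ge \frac{1}{C}|v|^{1-\eta}$; integration of $M_{j*}$ over this set is bounded below by a positive constant, yielding $\nu_i\gtrsim|v|^{1-\eta}$. When $i>p$ and $I$ is large, the term $I^{(1-\eta)/2}$ is already a pointwise lower bound of $B_{ij}$ and pulls out of the integral directly, giving $\nu_i\gtrsim I^{(1-\eta)/2}$. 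Combining these three regimes gives $\nu_i\gtrsim(1+|v|+\sqrt{I}\chi_{\{i>p\}})^{1-\eta}$, as desired.

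The main technical care is bookkeeping the four interaction types in \eqref{Defnu}: the mono-poly and poly-mono cases involve partial $(R,I_*)$ or $(r,I)$ integrations and one must verify that the Beta-type weights are integrable (which holds since $\de_j/2-1\geq 0$ and $1/2>-1$), so that the finite Borgnakke--Larsen normalization enters only as a species-dependent constant. Once this normalization is handled, the upper/lower bound arguments above apply uniformly in $i,j$, and the constants $C$ and $\nu_0$ depend only on $\de_i,m_i,\eta$ as claimed.
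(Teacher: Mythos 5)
Your proposal is correct and follows essentially the same strategy as the paper, just presented in a slightly different parametrization. The paper works directly with the kernel $W_{ij}$ (containing Dirac masses) and integrates out the constraints via the change of variables \eqref{changeofvariables}, obtaining for each of the four interaction types a bound of the form $\nu_{ij}\lesssim \int e^{-m_j|v_*|^2/2-I_*/2}(|u|^2+I\chi_{\{i>p\}}+I_*\chi_{\{j>p\}})^{(1-\eta)/2}\,dv_*\,dI_*$, which it controls via the multiplicative split $|u|^2+I+I_*\le(1+|v_*|^2)(1+I_*)(1+|v|^2+I)$; you instead pass to the Borgnakke--Larsen variables and use subadditivity of $x\mapsto x^{(1-\eta)/2}$ to split $B_{ij}$, then absorb the $v_*,I_*$ powers into the Maxwellian tail. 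Both give the same constants. For the lower bound, the paper uses one inequality $(|u|^2+I+I_*)^{(1-\eta)/2}\ge c(||v|-|v_*||+\sqrt I)^{1-\eta}$ together with the indicator split $\chi_{\{|v|\ge1\}}\chi_{\{|v_*|\le1/2\}}+\chi_{\{|v|<1\}}\chi_{\{|v_*|>2\}}$, while you split into three regimes ($|v|,I$ bounded; $|v|$ large with $|v_*|\le|v|/2$; $I$ large); this is a small repackaging of the same idea, and you do need to observe at the end (as you do implicitly) that $\max\{1,|v|^{1-\eta},I^{(1-\eta)/2}\}\gtrsim(1+|v|+\sqrt I)^{1-\eta}$ by subadditivity again. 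One technical point that the paper handles but that your sketch only implicitly covers: in the paper's parametrization the lower-bound integrals over $I',I'_*$ carry the constraint $I'+I'_*\le E_{ij}$, which is dealt with via $\chi_{\{I'+I'_*<E_{ij}\}}\ge\chi_{\{I'<E_{ij}/4\}}\chi_{\{I'_*<E_{ij}/4\}}$; in your Borgnakke--Larsen form this constraint is automatically encoded in $(R,r)\in[0,1]^2$ and the Beta weights are integrable since $\de_i,\de_j\ge2$, so your formulation sidesteps that step, which is a mild simplification.
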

Third, regarding the nonlinear problem, we have the following bound for nonlinear operator $\Ga_i$. See the last part of Section \ref{Section3} for a proof of this lemma.
\begin{lemma}\label{lebound}
	Let $\Ga_i$ be defined in \eqref{DefLGa}. It holds that
	\begin{align}
		\left|w_i\Ga_i(f,f) \right|&\leq C\|(w_jf_j)(t)\|^2_\infty,\label{estGa}\\
		\|w_i\Ga_i(f,f) \|_{L^2_x}&\leq C\nu_i\sum^n_{j=1}\|(w_jf_j)(t,Z_j)\|_{L^2_x}^2+C\nu_i\sum^n_{j=1}\|(w_jf_j)(t)\|^2_\infty,\label{estGaL2}\\
		\|w_i\Ga_i(f,f) \|_{L^1_x}&\leq C\nu_i\sum^p_{j=1}\|(w_jf_j)(t,Z_j)\|_{L^2_x}^2,\label{estGaL1}
	\end{align}
for a constant $C>0$ depending on $\de_i$, $m_i$ and $\eta$.	
\end{lemma}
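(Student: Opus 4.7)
Split $\Ga_i=M_i^{-1/2}\sum_{j=1}^n[Q_{ij}^+(M^{1/2}f,M^{1/2}f)-Q_{ij}^-(M^{1/2}f,M^{1/2}f)]$ and treat the four interaction types (mono-mono, mono-poly, poly-mono, poly-poly) uniformly, handling gain and loss pieces in parallel. Two ingredients drive the argument: first, the energy conservation enforced by $\bm{\de}_1(\cdots)$ in \eqref{DefW} yields a Gaussian bound on $M_i^{1/2}(Z')M_j^{1/2}(Z'_*)$ in terms of the pre-collision variables $(v,v_*,I,I_*)$, even in the dissimilar-mass case, at the cost of a polynomial factor that is harmless in the hard-potential regime; second, the polynomial weight $w_{i\be}$ is submultiplicative, $w_{i\be}(Z)\le C\,w_{i\be}(Z')w_{j\be}(Z'_*)w_{j\be}^{-1}(Z_*)e^{\ep(|v|^2+|v_*|^2+I+I_*)}$ for small $\ep$, with the exponential slack absorbed by the Maxwellians.

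\textbf{Pointwise bound \eqref{estGa}.} The loss part reads $\Ga_{ij}^-(Z)=f_i(Z)\int B_{ij}\,M_j^{1/2}(Z_*)\,f_j(Z_*)\,d\mu(\cdots)$. Inserting $|f_j(Z_*)|\le w_{j\be}^{-1}(Z_*)\|w_{j\be}f_j\|_\infty$ pulls $\|w_{j\be}f_j\|_\infty$ outside the integral; the remaining integral $\int B_{ij}\,M_j^{1/2}(Z_*)\,w_{j\be}^{-1}(Z_*)\,d\mu$ is bounded using Lemma \ref{lenu} and the Gaussian decay of $M_j^{1/2}$. Multiplying by $w_{i\be}(Z)$ and using $|w_{i\be}(Z)f_i(Z)|\le\|w_{i\be}f_i\|_\infty$ gives the loss contribution to \eqref{estGa}. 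For the gain part, $\Ga_{ij}^+(Z)=M_i^{-1/2}(Z)\int B_{ij}\,M_i^{1/2}(Z')M_j^{1/2}(Z'_*)\,f_i(Z')f_j(Z'_*)\,d\mu(\cdots)$; the Gaussian bound on $M_i^{-1/2}(Z)M_i^{1/2}(Z')M_j^{1/2}(Z'_*)$ together with $|f_i(Z')|\le w_{i\be}^{-1}(Z')\|w_{i\be}f_i\|_\infty$ and the analogous bound for $f_j(Z'_*)$ reorganize the integrand into the same structural form as the loss contribution, closing \eqref{estGa}.

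\textbf{$L^2_x$, $L^1_x$ bounds and main obstacle.} Both \eqref{estGaL2} and \eqref{estGaL1} follow by upgrading the pointwise bilinear bound to spatial norms. For \eqref{estGaL2}, one factor of $w_jf_j$ is kept in $L^\infty_{x,v,I}$ while the other is sent to $L^2_x$; Minkowski in $Z_*$ handles the collisional integral, and the asymmetric distribution of norms produces the two sums on the right. For \eqref{estGaL1}, Cauchy-Schwarz in $x$ sends both factors to $L^2_x$, and AM-GM absorbs the cross-product into $C\nu_i\sum_j\|w_jf_j\|_{L^2_x}^2$. The principal technical difficulty is the polyatomic gain term: the Borgnakke-Larsen Jacobian $R^{1/2}(1-R)^{(\de_i+\de_j)/2-1}r^{\de_i/2-1}(1-r)^{\de_j/2-1}$ combined with the cross-section singularity $E_{ij}^{-(\eta+\de_i\chi_{\{i>p\}}+\de_j\chi_{\{j>p\}})/2}$ in \eqref{si} can blow up when any of $I,I',I_*,I'_*$ becomes small, and integrability must be secured by redistributing the singularity onto the currently large internal-energy variables via the conservation law, in the spirit of the strategy explained before Lemma \ref{leKij}.
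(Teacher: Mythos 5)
Your proposal follows essentially the same route as the paper: first a pointwise weighted bilinear bound on the gain and loss parts of each $\Ga_{ij}$ (gain controlled through the Maxwellian ratio coming from conservation of energy, the weight inequality $w_{i\be}(Z)\le C\,w_{i\be}(Z')w_{j\be}(Z'_*)$, and integration of $I',I'_*$ over $\{I'+I'_*\le E_{ij}\}$ against the cross-section bound \eqref{si}, which is all the paper needs rather than the finer singularity-splitting used for the $K_{ij}$ kernels), and then Minkowski plus H\"older in $x$ distributing the two factors as $L^2_x\times L^\infty_x$ for \eqref{estGaL2} and $L^2_x\times L^2_x$ with AM--GM for \eqref{estGaL1}, exactly as in the paper's proof via Lemma \ref{Gabound}. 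One remark: the loss-term integral you bound is comparable to $\nu_i$, not uniformly bounded, so your argument (like the paper's own) yields $|w_i\Ga_i(f,f)|\le C\nu_i\sum_j\|w_jf_j\|^2_\infty$, which is the form actually used later in \eqref{SGa}; the literal \eqref{estGa} without the $\nu_i$ factor is not what either argument produces.
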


When considering the solution to the nonlinear problem, it is written in terms of the semigroup generated by the linearized Botlzmann equation given by
  \begin{eqnarray}\label{Lineareq}
	\pa_tf+v\cdot \na_x f+L f=0,   \quad &\dis f(t=0)=f_0.
\end{eqnarray}

To perform the $L^2\cap L^\infty$ approach on the linear equation, it is necessary to establish the $L^2$ decay for the solutions to \eqref{Lineareq} which is proved in Section \ref{Section4}. 
\begin{proposition}\label{leL2}
	Let $f$ be a solution to the linearized Boltzmann equation \eqref{Lineareq}. Then, there exist positive constants $\la$ and $C$ such that
	\begin{align}\label{L2decayR3}
		\|f(t)\|\leq C(1+t)^{-\frac{3}{4}}\|f_0\|_{L^2_{v,I}(L^1_x\cap L^2_x)},
	\end{align}
	when $\Omega = \R^3$, and
	\begin{align}\label{L2decaytorus}
		\|f(t)\|\leq Ce^{-\la t}\|f_0\|,
	\end{align}
	if $(M_{i0}, J_0, E_0) = (0,0,0)$ when $\Omega = \T^3$, where the constants $\la,C>0$ depend only on $\de_i$, $m_i$ and $\eta$.   Recall that $M_{i0}$, $J_0$ and $E_0$ are defined in \eqref{M1}, \eqref{M2}, \eqref{J} and \eqref{E}.
\end{proposition}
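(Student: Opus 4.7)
The plan is to carry out a spatial Fourier analysis combined with the hypocoercivity framework of Duan, which is well suited to both geometries in a unified way. Writing $f = \mathbf{P} f + (I-\mathbf{P})f$ where $\mathbf{P}$ is the orthogonal projection onto $\ker L = \mathrm{span}\{\phi_1,\dots,\phi_{n+4}\}$ from \eqref{Defphi}, I would parametrise the macroscopic part as
\begin{equation*}
\mathbf{P} f(t,x,v,I) = \sum_{i=1}^{n} a_i(t,x)\,\phi_i/\|\phi_i\| + b(t,x)\cdot\phi_{n+1}/\|\phi_{n+1}\|+ c(t,x)\,\phi_{n+4}/\|\phi_{n+4}\|,
\end{equation*}
so that the macroscopic coefficients $(a_i,b,c)$ are the $n+4$ quantities mentioned after \eqref{eqabc} in the strategy. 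Plugging this into \eqref{Lineareq} and taking inner products with well-chosen velocity moments of the basis produces a closed linear balance law for $(a_i,b,c)$ with source terms depending on $(I-\mathbf{P})f$; this is the perturbed Euler-type system the introduction advertises.

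Next, I would take the Fourier transform in $x$ and work frequency-by-frequency. The coercivity of $L$ on $(\ker L)^\perp$ combined with the polynomial-equivalent collision frequency from Lemma~\ref{lenu} gives, for each $k$,
\begin{equation*}
\tfrac{1}{2}\partial_t \|\hat f(t,k)\|_{v,I}^2 + \lambda_0\|(I-\mathbf{P})\hat f(t,k)\|_{v,I}^2 \le 0,
\end{equation*}
which dissipates only the microscopic part. To recover dissipation of the macroscopic coefficients I would build a Lyapunov functional of the form
\begin{equation*}
\mathcal{E}_k(t) := \|\hat f(t,k)\|_{v,I}^2 + \kappa\, \frac{1}{1+|k|^2}\,\mathrm{Re}\,\mathcal{I}_k(t),
\end{equation*}
where $\mathcal{I}_k$ is a sum of cross products such as $\langle \mathrm{i}k\,\hat b_j, \hat a_i\rangle$, $\langle \mathrm{i}k\, \hat c, \hat b\rangle$ and $\langle \mathrm{i}k\cdot (\mathrm{moments\ of}\ (I-\mathbf{P})\hat f), \hat b\rangle$, designed so that its time derivative produces the missing macroscopic dissipation $\frac{|k|^2}{1+|k|^2}\bigl(\sum|\hat a_i|^2 + |\hat b|^2 + |\hat c|^2\bigr)$ plus lower-order terms absorbed by the microscopic dissipation. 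Choosing $\kappa$ small enough to keep $\mathcal{E}_k \sim \|\hat f(t,k)\|_{v,I}^2$, I would arrive at the hypocoercive bound
\begin{equation*}
\partial_t \mathcal{E}_k(t) + \lambda\,\frac{|k|^2}{1+|k|^2}\,\mathcal{E}_k(t) \le 0.
\end{equation*}

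For the torus case, $k\in\mathbb{Z}^3$; the assumption $(M_{i0},J_0,E_0)=0$ together with the conservation laws \eqref{M1}--\eqref{E} forces $\hat f(t,0) = 0$ for all $t$, so $|k|\ge 1$ and the prefactor $|k|^2/(1+|k|^2) \ge \tfrac12$, yielding \eqref{L2decaytorus} after summing in $k$. For $\Omega=\R^3$, I would split the frequency integral into $|k|\le 1$ and $|k|\ge 1$: on the high regime the same uniform lower bound of the prefactor gives exponential decay, while on the low regime Grönwall yields $|\hat f(t,k)|^2 \lesssim e^{-\lambda|k|^2 t}|\hat f_0(k)|^2$ and the Hausdorff--Young bound $\|\hat f_0\|_{L^\infty_k L^2_{v,I}} \le \|f_0\|_{L^2_{v,I} L^1_x}$ combined with $\int_{|k|\le 1} e^{-\lambda|k|^2 t}\,dk \lesssim (1+t)^{-3/2}$ produces the $(1+t)^{-3/4}$ rate stated in \eqref{L2decayR3}.

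The main obstacle is the construction of $\mathcal{I}_k$: the dissimilar masses and the extra internal-energy variable break the symmetries used in the single-species or equal-mass case, so the local macroscopic equations for $(a_i,b,c)$ mix species through the collision invariants $m$ and $\mathbb{I}$. I expect to need a careful diagonalisation (or at least a Gram-type positivity argument) of the $(n+4)\times(n+4)$ matrix governing the macroscopic subsystem, using the structural assumption $\delta_i \ge 2$ and the non-degeneracy of the basis \eqref{Defphi}, to verify that the corrector $\mathcal{I}_k$ indeed generates coercive $\frac{|k|^2}{1+|k|^2}(|\hat a|^2+|\hat b|^2+|\hat c|^2)$ dissipation rather than sign-indefinite terms.
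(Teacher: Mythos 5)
Your proposal follows essentially the same route as the paper: moment equations for the $n+4$ macroscopic quantities, spatial Fourier transform, a hypocoercive Lyapunov functional built from cross terms such as $\langle \rmi k\hat a,\hat b\rangle$ and pairings of $\rmi k\hat b$, $\rmi k\hat c$ with moments of $(I-P)\hat f$, leading to $\pa_t\CE+\la\frac{|k|^2}{1+|k|^2}\CE\leq 0$, then frequency splitting with $\|\hat f_0\|_{L^\infty_kL^2_{v,I}}\leq\|f_0\|_{L^2_{v,I}L^1_x}$ for $\R^3$ and the conservation laws for $\T^3$. One small correction: on the torus the vanishing of $(M_{i0},J_0,E_0)$ only forces the macroscopic projection of the zero mode to vanish, i.e. $\int_{\T^3}(a_i,b,c)\,dx=0$, not $\hat f(t,0)=0$ itself; the microscopic zero mode is instead killed by the coercivity of $L$, which is how the paper closes the exponential decay.
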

The $L^\infty$ solution is constructed with an additional weight $w_\be$. Denote the function $h=w_\be f$ and the operator $K_{w_\be}h=(K_{1{w_\be}}h,\cdots,K_{n{w_\be}})={w_\be}K\frac{h}{{w_\be}}$, where
\begin{equation*}
w_\be f:=(w_{1\be}f_1,\cdots,w_{n\be}f_n)^T, \quad {w_\be}K:=(w_{1\be}K_1,\cdots,w_{n\be}K_n)^T , \quad and \quad \frac{h}{{w_\be}}:=(\frac{h_1}{w_{1\be}},\cdots,\frac{h_n}{w_{n\be}})^T.
\end{equation*}
We rewrite the linearized Boltzmann equation in terms of $h$ as
\begin{align}\label{LBEh}
	\pa_th+v\cdot \na_x h+\nu h-K_{w_\be}h=0,\qquad h(t=0)=h_0=(h_{10},\cdots,h_{n0})^T=w_\be f_0.
\end{align}
Let $S_\be(t)=(S_{\be 1}(t),\cdots,S_{\be n}(t))^T$ be the solution operator solving \eqref{LBEh}, then the mild solution of \eqref{LBEh} is defined by
\begin{align}\label{mildlinear}
	h = S_\be(t)h_0 = \begin{pmatrix}
		e^{-\nu_1(v)t}h_{10}(x-vt,v)+\int_0^t e^{-\nu_1(v)(t-s)}(K_{1{w_\be}}h)(s,x-v(t-s),v)ds\\
		\vdots
		\\
		e^{-\nu_n(v,I)t}h_{n0}(x-vt,v,I)+\int_0^t e^{-\nu_n(v,I)(t-s)}(K_{n{w_\be}}h)(s,x-v(t-s),v,I)ds
	\end{pmatrix},
\end{align}
or, equivalently, componentwise
\begin{align}\label{mildh}
	h_i(t) = S_{\be i}(t)h_0&=e^{-\nu_i(Z)t}h_{i0}(x-vt,Z) \notag\\
	&\quad + \sum^n_{j=1}\int_0^t e^{-\nu_i(Z)(t-s)}\int_{\CZ_j}(k_{ij}^2-k^1_{ij})(Z,Z_*)\frac{w_{i\be}(Z)}{w_{j\be}(Z_*)}h_j(s,x-v(t-s),Z_*)dZ_* ds\notag\\
	&\quad-\sum^n_{j=1}\int_0^t e^{-\nu_i(Z)(t-s)}\int_{\CZ_i}k_{ij}^3(Z,Z_*)\frac{w_{i\be}(Z)}{w_{i\be}(Z_*)}h_i(s,x-v(t-s),Z_*)dZ_* ds,
\end{align}
for $1\leq i\leq n$ by \eqref{reKij}.
The final ingredient is the following $L^\infty$ decay property for $S_\be(t)h_0$, which is proved in Section \ref{Section5}.
\begin{proposition}\label{lelinearLinfty}
	Assume $\Omega = \R^3$ and let $S_\be(t)h_0=h$ be the solution for \eqref{LBEh}.  Then, for any $\be>6$ there exists a constant $C$ depending on $\de_i$, $m_i$ and $\eta$ such that
	\begin{align}\label{boundh}
		\|S_\be(t)h_0\|_{L^\infty_{v,I}(L^2_x\cap L^\infty_x)}=\|h(t)\|_{L^\infty_{v,I}(L^2_x\cap L^\infty_x)}\leq C(1+t)^{-\frac{3}{4}}\|h_0\|_{L^\infty_{v,I}(L^1_x\cap L^\infty_x)},
	\end{align}
	for any $t\geq 0$. If $\Omega = \T^3$ and $(M_{i0}, J_0, E_0) = (0,0,0)$, then for any $\be>6$ there exist constants $C$ and $\la$ such that
	\begin{align}\label{boundhT}
		\|S_\be(t)h_0\|_{\infty}=\|h(t)\|_{\infty}\leq Ce^{-\la t}\|h_0\|_\infty,
	\end{align}
	for any $t\geq 0$, where $\la,C$ depend on $\de_i$, $m_i$ and $\eta$.
\end{proposition}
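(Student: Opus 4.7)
The plan is to implement the double-Duhamel $L^{2}\cap L^{\infty}$ scheme, using Proposition~\ref{leL2} as the source of time decay and Lemmas~\ref{leKij}--\ref{lenu} to control the kernels and the collision frequency. Starting from \eqref{mildh}, I substitute the same formula for $h_{j}(s,x-v(t-s),Z_{*})$ back into the $K_{jw_{\be}}$ factor to obtain a three-term decomposition: a free-streaming term $e^{-\nu_{i}(Z)t}h_{i0}(x-vt,Z)$, a single-collision term involving the initial data $h_{0}$, and a double-collision term
\begin{align*}
\sum_{j,l}\int_{0}^{t}\!\!\int_{0}^{s}\!\!\int_{\CZ_{j}\times\CZ_{l}}\! e^{-\nu_{i}(Z)(t-s)}e^{-\nu_{j}(Z_{*})(s-\tau)}\,\widetilde k_{ij}(Z,Z_{*})\widetilde k_{jl}(Z_{*},Z_{**})\,h_{l}\bigl(\tau,x-v(t-s)-v_{*}(s-\tau),Z_{**}\bigr)\,dZ_{**}dZ_{*}d\tau\,ds,
\end{align*}
where $\widetilde k_{ij}$ collects the weighted pieces $\pm k^{1,2,3}_{ij}(Z,Z_{*})w_{i\be}(Z)/w_{j\be}(Z_{*})$ of \eqref{reKij}. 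The free-streaming and single-collision terms decay like $e^{-\nu_{0}t}\|h_{0}\|_{\infty}$ by \eqref{nu} and \eqref{boundk}, which is already stronger than the claimed $(1+t)^{-3/4}$ and $e^{-\la t}$ rates; all the work is in the double-collision piece.

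That piece is split according to parameters $N\gg 1$ and $\kappa\ll 1$ to be chosen. \textbf{Zone A} is the set where $|v|+\sqrt{I}\ge N$ or $|v_{*}|+\sqrt{I_{*}}\ge N$; here the refined pointwise decay $\int\widetilde k\,dZ_{*}=O(N^{-1/2})$ announced at the end of Section~\ref{Section3} (the decay in $v$ and $I$ described in the strategy subsection) yields a smallness factor $\eps(N)\to 0$ as $N\to\infty$, contributing at most $\eps(N)\|h\|_{\infty}$. \textbf{Zone B} is the short-time region $t-s\le\kappa$ or $s-\tau\le\kappa$, bounded by $C\kappa\|h\|_{\infty}$ through the uniform integrability \eqref{boundk}. \textbf{Zone C} is the truncated region $|v|,|v_{*}|\le N$, $I,I_{*}\le N^{2}$, $t-s,s-\tau\ge\kappa$; on this region I carry out the change of variable $v_{*}\mapsto y=x-v(t-s)-v_{*}(s-\tau)$, with $dv_{*}=(s-\tau)^{-3}dy$ and image in a ball of radius $O(N(s-\tau))$. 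Cauchy--Schwarz in $y$ followed by a second Cauchy--Schwarz in $Z_{**}$ on the truncation produces $C_{N,\kappa}(s-\tau)^{-3/2}\|h(\tau)\|_{L^{2}_{x,v,I}}$; Proposition~\ref{leL2} then converts this into $C_{N,\kappa}(1+\tau)^{-3/4}\|h_{0}\|_{L^{2}_{v,I}L^{1}_{x}}$ (respectively $C_{N,\kappa}e^{-\la\tau}\|h_{0}\|$ on the torus), and $\|h_{0}\|_{L^{2}_{v,I}L^{1}_{x}}\le C\|h_{0}\|_{L^{\infty}_{v,I}(L^{1}_{x}\cap L^{\infty}_{x})}$ holds once $\be>3$.

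Integrating in $s,\tau$ and assembling the three zones, I introduce $M(t):=\sup_{0\le s\le t}(1+s)^{3/4}\|h(s)\|_{L^{\infty}_{v,I}(L^{2}_{x}\cap L^{\infty}_{x})}$ for the whole-space case (respectively $M(t):=\sup_{0\le s\le t}e^{\la s}\|h(s)\|_{\infty}$ on the torus) and derive
\begin{align*}
M(t)\le C\|h_{0}\|_{L^{\infty}_{v,I}(L^{1}_{x}\cap L^{\infty}_{x})}+\bigl(\eps(N)+C\kappa\bigr)M(t).
\end{align*}
Choosing $N$ large and then $\kappa$ small so that $\eps(N)+C\kappa\le 1/2$ absorbs the right-hand side and yields \eqref{boundh}. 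The $L^{\infty}_{v,I}L^{2}_{x}$ part of $M(t)$ is produced by the same decomposition after taking $L^{2}_{x}$ before the kernel integration and applying Minkowski's inequality together with Proposition~\ref{leL2} directly; the torus bound \eqref{boundhT} follows identically, substituting the exponential $L^{2}$ decay \eqref{L2decaytorus} for the polynomial decay \eqref{L2decayR3} throughout.

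\textbf{The main obstacle} is Zone C, because the change of variables $v_{*}\mapsto y$ only acts in $v_{*}$ but must be compatible with the twelve pointwise kernel bounds for $k^{1,2,3}_{ij}$ across the four collision types. As emphasized in the strategy subsection, the dissimilar masses force the $k^{2}_{ij}$ weights to sit on the $\tfrac{m_{i}}{m_{i}-m_{j}}v-\tfrac{m_{j}}{m_{i}-m_{j}}v_{*}$ axis rather than on $v-v_{*}$, the internal-energy variable reduces the decay in $|v|$ from $|v|^{-1}$ to $|v|^{-1/2}$, and the polyatomic cross-section \eqref{si} is more singular than its mono-atomic counterpart. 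The truncation thresholds $N,\kappa$ must therefore be calibrated consistently against all twelve estimates; the refined versions of Lemma~\ref{leKij} at the end of Section~\ref{Section3} are designed to provide precisely the uniform-in-type decay bounds that make this calibration possible.
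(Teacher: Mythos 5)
Your overall architecture is the same as the paper's: double Duhamel iteration, a large-parameter region controlled by the pointwise kernel decay, a short-time region controlled by $\la$ (your $\kappa$), and a compact main region where a change of variable in $v_*$ feeds into the $L^2$ decay from Proposition~\ref{leL2}. However, the main region (your Zone C) has a genuine gap.

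First, your Zone A only imposes $|v|+\sqrt I\ge N$ or $|v_*|+\sqrt{I_*}\ge N$, so the complementary Zone C has \emph{no} bound on $Z_{**}$, and no bound on the relative velocity $|v_*-v_{**}|$ or $|m_jv_*-m_lv_{**}|$. The paper treats these as separate cases (its Cases 2 and 3) precisely so that in the final case \emph{all} of $v,v_*,v_{**},I,I_*,I_{**}$ are confined to a ball $\CZ^B$. Without that confinement the smooth-kernel approximation of \eqref{approx}--\eqref{approxk3} cannot be deployed.

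Second, and this is the decisive issue: your Cauchy--Schwarz ``in $Z_{**}$'' applied directly to the raw kernel $\widetilde k_{jl}(Z_*,Z_{**})$ requires an $L^2$ bound on the kernel in the second variable, but only $L^1$-type bounds (with decay) are established in Lemmas~\ref{leK}--\ref{leKmm}. In fact the bound proved for $k^1_{jl}$ in the poly-poly and mono-poly cases scales like $(I_{**})^{\de_l/4-1}$ near $I_{**}=0$; for $\de_l=2$ this is $(I_{**})^{-1/2}$, whose square $I_{**}^{-1}$ is \emph{not locally integrable}. So $\int |k^1_{jl}(Z_*,Z_{**})|^2\,dZ_{**}$ diverges even on a compact set, and the second Cauchy--Schwarz you invoke fails. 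The paper circumvents exactly this obstruction by replacing $k^{\al}_{jl}\,w_{j\be}/w_{l\be}$ with a smooth, compactly supported approximation $k^{\al}_{Njl}$ satisfying \eqref{approx}; the approximation error is a small $L^1$ quantity (contributing $O(N^{-1})$ to your $\eps(N)$ budget), while the approximated kernel is bounded pointwise, so the change of variable $y=x_1-v_*(s-s_1)$ followed by a single Cauchy--Schwarz over the \emph{bounded} set $\CZ^B_{**l}\times\R^3_y$ produces $\|f(s_1)\|_{L^2_{x,v,I}}$. This approximation step is the missing idea: without it, Zone C does not close, and the scheme cannot absorb the full-norm term $M(t)$ on the right-hand side. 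Incorporating the smooth truncation and expanding Zone A to also cover large relative velocities and large $I_{**}$ (as in the paper's Cases 2--3) would repair the argument.
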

With these results at hand we can prove the main result.
\begin{proof}[Proof of Theorem \ref{global}]
	Assume $x\in\Omega = \R^3$. Rewrite the Boltzmann equation \eqref{BE} in terms of $h=w_{\be}f$ as follows:
	\begin{align*}
		\pa_th+v\cdot \na_x h+\nu h=K_{w_\be}h+w\,\Ga\big(\frac{h}{w_\be},\frac{h}{w_\be}\big),   \quad &\dis h(t=0)=h_{0}={w_\be}f_{0},
	\end{align*}
	where $${w_\be}\Ga(\frac{h}{w_\be},\frac{h}{w_\be})=\begin{pmatrix}
		w_{1\be}\Ga_1 (\frac{h_1}{w_{1\be}},\frac{h_1}{w_{1\be}})\\
		\vdots
		\\
		w_{n\be}\Ga_n (\frac{h_n}{w_{n\be}},\frac{h_n}{w_{n\be}})
	\end{pmatrix}.$$
	Recalling the solution operator $S_\be(t)$ defined in \eqref{mildlinear} for the linear problem we have that
	\begin{align*}
		h&=S_\be(t)h_0+\int^t_0\left\{S_\be(t-s){w_\be}\Ga(\frac{h}{w_\be},\frac{h}{w_\be})(s)\right\}(s,x-v(t-s),Z)ds.
	\end{align*}
	We use \eqref{mildlinear} to further write
	\begin{align}\label{rehi}
		h_i(t,x,Z)&=S_{\be i}(t)h_0+\int^t_0e^{-\nu_i(Z)(t-s)}w_{i\be}\Ga_i (\frac{h_i}{w_{i\be}},\frac{h_i}{w_{i\be}})(s,x-v(t-s),Z)ds\notag\\
		&\qquad+\int^t_0\int^t_se^{-\nu_i(Z)(t-s_1)}K_{iw_\be}\left\{S_{\be}(s_1-s)w\Ga(\frac{h}{w},\frac{h}{w})(s)\right\}(s,x-v(t-s),Z)d{s_1}ds,
	\end{align}
	where $1\leq i \leq n$. We need to control the $L^\infty_{v,I}(L^2_x\cap L^\infty_x)$ norm for each term above.
	It holds by \eqref{boundh} that
	\begin{align}\label{Sh0}
		\|S_{\be i}(t)h_0\|_{L^\infty_{v,I}(L^2_x\cap L^\infty_x)}\leq C(1+t)^{-\frac{3}{4}}\|h_0\|_{L^\infty_{v,I}(L^1_x\cap L^\infty_x)}.
	\end{align}
	Moreover, we use \eqref{estGa} and \eqref{estGaL2} to obtain that
	\begin{align}\label{SGa}
		\Big\|\int^t_0&e^{-\nu_i(Z)(t-s)}w_{i\be}\Ga_i (\frac{h_i}{w_{i\be}},\frac{h_i}{w_{i\be}})(s,x-v(t-s),Z)ds\Big\|_{L^\infty_{v,I}(L^2_x\cap L^\infty_x)}\notag\\
		\leq& C\sup_{0\leq t<\infty}\|(1+t)^{\frac{3}{4}}h(t)\|^2_{L^\infty_{v,I}(L^2_x\cap L^\infty_x)}\int^t_0e^{-\nu_i(Z)(t-s)}\nu_i(Z)(1+s)^{-\frac{3}{2}}ds\notag\\
		\leq& C(1+t)^{-\frac{3}{4}}\sup_{0\leq t<\infty}\|(1+t)^{\frac{3}{4}}h(t)\|^2_{L^\infty_{v,I}(L^2_x\cap L^\infty_x)}.
	\end{align}
	For the last term on the right hand side of \eqref{rehi}, it follows from \eqref{reKij} that
	\begin{align}\label{KSGa}
		\int^t_0\int^t_s &e^{-\nu_i(Z)(t-s_1)}K_{iw_{\be}}\left\{S(s_1-s)w_{\be}\Ga(\frac{h}{w_{\be}},\frac{h}{w_{\be}})(s)\right\}(s,x-v(t-s),Z)d{s_1}ds\notag\\
		=&\sum^n_{j=1}\int^t_0\int^t_se^{-\nu_i(Z)(t-s_1)} \int_{\CZ_j}(k^2_{ij}-k^1_{ij})(Z,Z_*)\frac{w_{i\be}(v,I)}{w_{j\be}(v_*,I_*)}\notag\\
		&\qquad\qquad\qquad\times\left\{S_{\be j}(s_1-s)w_{\be}\Ga(\frac{h}{w_{\be}},\frac{h}{w_{\be}})(s)\right\}(s,x-v(t-s),Z_*)dZ_*d{s_1}ds\notag\\
		&+\sum^n_{j=1}\int^t_0\int^t_se^{-\nu_i(Z)(t-s_1)}\int_{\CZ_i}k^3_{ij}(Z,Z_{*})\frac{w_{i\be}(v,I)}{w_{i\be}(v_*,I_*)}\notag\\
		&\qquad\qquad\qquad\times\left\{S_{\be j}(s_1-s)w\Ga(\frac{h}{w_{\be}},\frac{h}{w_{\be}})(s)\right\}(s,x-v(t-s),Z_*)dZ_*d{s_1}ds.
	\end{align}
	Note that from \eqref{mildh}, for any $r\in \R$, we have the property
	\begin{align*}
		\frac{1}{w_{ri}(Z)}&S_{\be i}(t)h_0=e^{-\nu_i(Z)t}\frac{1}{w_{ri}(Z)}h_{i0}(x-vt,Z)\notag\\
		&+\sum^n_{j=1}\int_0^t e^{-\nu_i(Z)(t-s)}\int_{\CZ_j}(k_{ij}^2-k^1_{ij})(Z,Z_*)\frac{w_{i\be}(Z)}{w_{j\be}(Z_*)}\frac{w_{rj}(Z_*)}{w_{ri}(Z)}\frac{1}{w_{rj}(Z_*)}h_j(s,x-v(t-s),Z_*)dZ_* ds\notag\\
		&\qquad-\sum^n_{j=1}\int_0^t e^{-\nu_i(Z)(t-s)}\int_{\CZ_i}k_{ij}^3(Z,Z_*)\frac{w_{i\be}(Z)}{w_{i\be}(Z_*)}\frac{w_{rj}(Z_*)}{w_{ri}(Z)}\frac{1}{w_{rj}(Z_*)}h_i(s,x-v(t-s),Z_*)dZ_* ds,
	\end{align*}
	or, equivalently,
	\begin{align*}
		\frac{1}{w_{r}}S_{\be}(t)h_0=S_{\be-r}(t)(\frac{h_0}{w_r}).
	\end{align*}
	We obtain from the above property, \eqref{nu}, \eqref{boundk}, \eqref{boundh} and \eqref{estGa} that
	\begin{align*}
		\Big|\int^t_0&\int^t_s e^{-\nu_i(Z)(t-s_1)} \int_{\CZ_j}(k^2_{ij}-k^1_{ij})(Z,Z_*)\frac{w_{i\be}(v,I)}{w_{j\be}(v_*,I_*)}\\
		&\hspace{3cm}\times\left\{S_{\be j}(s_1-s)w_{\be}\Ga(\frac{h}{w_{\be}},\frac{h}{w_{\be}})(s)\right\}(s,x-v(t-s),Z_*)dZ_*d{s_1}ds\Big|\notag\\
		=&\Big|\int^t_0\int^t_se^{-\nu_i(Z)(t-s_1)} \int_{\CZ_j}(k^2_{ij}-k^1_{ij})(Z,Z_*)\frac{w_{i\be}(v,I)}{w_{j\be}(v_*,I_*)}w_{1-\eta,j}(Z_*)\notag\\
		&\hspace{3cm}\times\left\{S_{\be-1+\eta, j}(s_1-s)\frac{w_{\be}}{w_{1-\eta}}\Ga(\frac{h}{w_{\be}},\frac{h}{w_{\be}})(s)\right\}(s,x-v(t-s),Z_*)dZ_*d{s_1}ds\Big|\notag\\
		\leq&C\sup_{0\leq t<\infty}\|(1+t)^{\frac{3}{4}}h(t)\|^2_{L^\infty_{v,I}(L^2_x\cap L^\infty_x)}\int^t_0\int^t_se^{-\nu_i(Z)(t-s_1)}\nu_i(Z)\notag\\
		&\hspace{2cm}\times \int_{\CZ_j}|(k^2_{ij}-k^1_{ij})(Z,Z_*)|\frac{w_{i\be}(v,I)}{w_{j\be}(v_*,I_*)}\frac{\nu_j(Z_*)}{\nu_i(Z)}(1+s_1-s)^{-\frac{3}{4}}(1+s)^{-\frac{3}{2}}dZ_*d{s_1}ds\notag\\
		\leq&C\sup_{0\leq t<\infty}\|(1+t)^{\frac{3}{4}}h(t)\|^2_{L^\infty_{v,I}(L^2_x\cap L^\infty_x)}\int^t_0\int^{s_1}_0e^{-\nu_i(Z)(t-s_1)}\nu_i(Z)(1+s_1-s)^{-\frac{3}{4}}(1+s)^{-\frac{3}{2}}dsd{s_1}\notag\\
		\leq&C(1+t)^{-\frac{3}{4}}\sup_{0\leq t<\infty}\|(1+t)^{\frac{3}{4}}h(t)\|^2_{L^\infty_{v,I}(L^2_x\cap L^\infty_x)}.
	\end{align*}
	And similarly,
	\begin{align*}
		\Big|\int^t_0&\int^t_se^{-\nu_i(Z)(t-s_1)}\int_{\CZ_i}k^3_{ij}(Z,Z_{*})\frac{w_{i\be}(v,I)}{w_{i\be}(v_*,I_*)}\\
		&\hspace{3cm}\times\left\{S_{\be j}(s_1-s)w\Ga(\frac{h}{w_{\be}},\frac{h}{w_{\be}})(s)\right\}(s,x-v(t-s),Z_*)dZ_*d{s_1}ds\Big|\notag\\
		\leq&C(1+t)^{-\frac{3}{4}}\sup_{0\leq t<\infty}\|(1+t)^{\frac{3}{4}}h(t)\|^2_{L^\infty_{v,I}(L^2_x\cap L^\infty_x)}.
	\end{align*}
	These two inequalities above, together with \eqref{KSGa}, show that
	\begin{align}\label{KSG}
		\Big|\int^t_0&\int^t_se^{-\nu_i(Z)(t-s_1)}K_{iw_{\be}}\left\{S(s_1-s)w_{\be}\Ga(\frac{h}{w_{\be}},\frac{h}{w_{\be}})(s)\right\}(s,x-v(t-s),Z)d{s_1}ds\Big|\notag\\
		\leq&C(1+t)^{-\frac{3}{4}}\sup_{0\leq t<\infty}\|(1+t)^{\frac{3}{4}}h(t)\|^2_{L^\infty_{v,I}(L^2_x\cap L^\infty_x)}.
	\end{align}
	In the same way as described for \eqref{KSG}, it also holds that
		\begin{align}\label{KSGL2}
		\Big(\int_{\Omega}\Big|\int^t_0 & \int^t_se^{-\nu_i(Z)(t-s_1)}K_{iw_{\be}}\left\{S(s_1-s)w_{\be}\Ga(\frac{h}{w_{\be}},\frac{h}{w_{\be}})(s)\right\}(s,x-v(t-s),Z)d{s_1}ds\Big|^2dx\Big)^\frac{1}{2}\notag\\
		\leq&C(1+t)^{-\frac{3}{4}}\sup_{0\leq t<\infty}\|(1+t)^{\frac{3}{4}}h(t)\|^2_{L^\infty_{v,I}(L^2_x\cap L^\infty_x)}.
	\end{align}
	We collect \eqref{rehi}, \eqref{Sh0}, \eqref{SGa}, \eqref{KSG} and \eqref{KSGL2} to obtain that
	\begin{align*}
		\sup_{0\leq t<\infty}\|(1+t)^{\frac{3}{4}}h_i(t)\|_{L^\infty_{v,I}(L^2_x\cap L^\infty_x)}\leq C\|h_0\|_{L^\infty_{v,I}(L^1_x\cap L^\infty_x)}+C\sup_{0\leq t<\infty}\|(1+t)^{\frac{3}{4}}h(t)\|^2_{L^\infty_{v,I}(L^2_x\cap L^\infty_x)},
	\end{align*}
	which yields \eqref{GE} by taking summation on $i$ and assuming $
	\sum^n_{i=1}\left\|w_{i\be} f_{i0}\right\|_{\infty}\leq \eps
	$ for a sufficiently small $\eps>0$ depending only on $\de_i$, $m_i$ and $\eta$.  In the case $\Omega = \T^3$, we can obtain \eqref{GET} using \eqref{L2decaytorus}, \eqref{boundhT}, and analogous arguments. In this way Theorem \ref{global} is proved.
\end{proof}

\section{Pointwise Bounds for Collision Operators}\label{Section3}
In this section, we study three collision operators $\nu_i$, $K_{ij}$ and $\Ga_i$ induced by the linearization. For later use, we not only need $K$ to be compact, but also prove the decay properties in velocity and internal energy, where the proof makes use of a wide range of changes of variables which would be applied in the estimates for $\nu_i$ and $\Ga_i$.  Let us start the analysis with the operator $K_{ij}$ first. 

\subsection{Decay property of the linearized operator}
The linear operator $K$ behaves differently in the four cases of collisions, so they will be approached separately.  We start with Poly-Poly collision where all the interacting distribution functions have an internal energy variable.  Consequently, we can establish more general bounds that can be used in the other cases.
\begin{lemma}[Poly-Poly Collision]\label{leK}
	For $i,j>p,$ the linearized operator defined in \eqref{DefKij} can be written as
	\begin{align*}
		\dis K_{ij}f=\int_{\R^3\times \R_+}(k^2_{ij}(v,I,v_*,I_*)f_j(v_*,I_*)+k^3_{ij}(v,I,v_*,I_*)f_i(v_*,I_*)-k^1_{ij}(v,I,v_*,I_*)f_j(v_*,I_*) )dv_*dI_*,
	\end{align*}
where $k^{1,2,3}_{ij}$ satisfies the following decay properties:
\begin{align}
	&\int_{\R^3\times\R_+} k^1_{ij}(v,I,v_*,I_*)\frac{w_{i\be}(v,I)}{w_{j\be}(v_*,I_*)}e^\frac{\de|v-v_*|^2}{64}(1+I_*)^{\frac{1}{8}} dv_*dI_*\leq \frac{C}{1+|v|^{\frac{1}{2}}+I^\frac{1}{8}},\label{kpp}\\
	&\int_{\R^3\times\R_+} k^2_{ij}(v,I,v_*,I_*)\frac{w_{i\be}(v,I)}{w_{j\be}(v_*,I_*)}e^\frac{\de|m_iv-m_jv_*|^2}{64}(1+I_*)^{\frac{1}{8}} dv_*dI_*\leq \frac{C}{1+|v|^{\frac{1}{2}}+I^\frac{1}{8}},\label{k2pp}\\
	&\int_{\R^3\times\R_+} k^3_{ij}(v,I,v_*,I_*)\frac{w_{i\be}(v,I)}{w_{i\be}(v_*,I_*)}e^\frac{\de|v-v_*|^2}{64}(1+I_*)^{\frac{1}{8}} dv_*dI_*\leq \frac{C}{1+|v|^{\frac{1}{2}}+I^\frac{1}{8}}.\label{k3pp}
\end{align}
Here $\dis \de=\min_{1\leq i,j\leq n}\{m_i,\frac{(\sqrt{m_i}-\sqrt{m_j})^2}{16(m_i-m_j)^2},\frac{1}{m_i^2}\}.$
\end{lemma}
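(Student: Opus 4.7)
\textbf{Proof proposal for Lemma~\ref{leK}.} The plan is to realize each of the three contributions to $K_{ij}$ as an integral operator with an explicit kernel, and then extract the pointwise decay by combining the Gaussian factors in the Maxwellians with the polynomial decay of $\si_{ij}$ in the total collisional energy $E_{ij}$. Starting from \eqref{DefKij} with $i,j>p$, the term coming from $-f_{j*}/(M_{j*})^{1/2}$ directly yields the loss kernel
\[
k^1_{ij}(v,I,v_*,I_*) = (M_{j*})^{1/2} \int \frac{(M'_iM'_{j*})^{1/2}}{(II')^{\de_i/4-1/2}(I_*I'_*)^{\de_j/4-1/2}} W_{ij}\, dv' dv'_* dI' dI'_*,
\]
after integrating against the delta measures in $W_{ij}$. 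For the two gain terms a Carleman-type change of variables is used: relabel $(v'_*,I'_*)\mapsto(v_*,I_*)$ (respectively $(v',I')\mapsto(v_*,I_*)$) and integrate out the remaining variables using the conservation constraints, producing kernels $k^2_{ij}$ and $k^3_{ij}$ acting on $f_j$ and $f_i$ respectively. This already gives the decomposition \eqref{reKij} in the Poly-Poly case.

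The loss kernel $k^1_{ij}$ is the simplest: the explicit Gaussian $(M_{j*})^{1/2}$ in $(v_*,I_*)$, together with the polynomial bound on $B_{ij}$ coming from \eqref{si}, produces both the $e^{\de|v-v_*|^2/64}$ factor and the decay in $(v,I)$ after splitting the $v_*$-integration into $\{|v_*|\leq|v|/2\}$ (where $M_{j*}$ provides Gaussian decay in $v$) and its complement (where half of $M_{j*}$ is traded for the weight $e^{\de|v-v_*|^2/64}$). A similar argument works for $k^3_{ij}$: since the $f$-index is preserved, the product of the surviving Maxwellian factors can be written as $\exp[-c(|v|^2+|v_*|^2+I+I_*)+\text{cross terms}]$ whose cross terms close into a non-degenerate Gaussian in $v-v_*$, and then the same splitting, combined with the polynomial ratio $w_{i\be}(v,I)/w_{i\be}(v_*,I_*)$, gives \eqref{k3pp}.

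The main obstacle is the estimate \eqref{k2pp} for $k^2_{ij}$. After the Carleman change of variables, the mass mismatch $m_i\ne m_j$ causes the Gaussian exponent to take the form
\[
-c_1\,\bigl|\tfrac{m_i}{m_i-m_j}v-\tfrac{m_j}{m_i-m_j}v_*\bigr|^2 \;-\; c_2\bigl(|v-v_*|^2 - C_1(m_i,m_j)\,\widehat{\De} I\bigr),
\]
where $\widehat{\De}I=(I'-I)+(I_*-I'_*)$ can be positive and therefore could, a priori, inflate the kernel. This is precisely why the weight in \eqref{k2pp} is chosen as $e^{\de|m_iv-m_jv_*|^2/64}$ rather than $e^{\de|v-v_*|^2/64}$: since $\de\leq(\sqrt{m_i}-\sqrt{m_j})^2/16(m_i-m_j)^2$, this weight is absorbed into $c_1$ while still leaving a non-degenerate Gaussian in the new variable $m_iv-m_jv_*$. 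The residual factor $e^{c_2C_1\widehat{\De}I}$ is then controlled by splitting into the six cases described after the discussion of $k^2_{ij}$ in the introduction, according to the sizes of $I,I_*,I',I'_*$ relative to $|v|$ and $E_{ij}$. In the regimes where the internal energies are small compared with the velocity part, the conservation of energy $|v-v_*|^2+I+I_*\sim|v'-v'_*|^2+I'+I'_*$ keeps $\widehat{\De}I$ below $|v-v_*|^2$ and the Gaussian decay survives; in the regimes where some internal energy is large, the polynomial decay $E_{ij}^{-(\eta+\de_i+\de_j)/2}$ of $\si_{ij}$ provides simultaneously integrability in $(v_*,I_*)$ and the factor $1/(1+I^{1/8})$.

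Finally, the decay rate $1/(1+|v|^{1/2})$ in $v$, weaker than the $1/(1+|v|)$ available in the monatomic setting, appears because in several of the six cases one must use Cauchy--Schwarz on the product of $B_{ij}$ with the surviving Gaussian, trading half a power of $|v|$ for integrability against the singular cross section in the internal energy variables. Carrying the polynomial weight ratio $w_{i\be}(v,I)/w_{j\be}(v_*,I_*)$ and the extra $(1+I_*)^{1/8}$ through this case analysis is the most delicate bookkeeping of the argument, and I expect the main technical hurdle to be verifying that the six regions cover all configurations consistently and that in each region the bound can be obtained uniformly in $\be\geq 0$.
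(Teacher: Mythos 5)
Your overall decomposition and your plan for $k^2_{ij}$ do coincide with the paper's route: the relabeling $(v'_*,I'_*)\leftrightarrow(v_*,I_*)$, the lower bound $m_i|v'|^2+m_j|v'_*|^2\ge(\sqrt{m_i}-\sqrt{m_j})^2|u_{ij}|^2+\tfrac{m_im_j}{(\sqrt{m_i}+\sqrt{m_j})^2}\big(|u|^2-2\tfrac{m_j-m_i}{m_im_j}\widehat{\De}I\big)$, which is exactly why the weight must be $e^{\de|m_iv-m_jv_*|^2/64}$, and a case analysis comparing $I,I_*$ with $\ka(1+|v|^2)$ combined with a positivity argument for the resulting quadratic form and a Grad-type integration in $v_*$. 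Two items are still missing even there: the degenerate case $m_i=m_j$, where $u_{ij}$ is undefined and the paper runs a separate $k^3$-type argument (this is also why $\de$ contains $1/m_i^2$), and the flexible splitting $\Psi$ of $\widehat{E}_{ij}^{-(\de_i+\de_j)/2}$, which is what makes the $I',I'_*,I_*$ integrals converge case by case.

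The genuine gap is in $k^3_{ij}$ (and, more mildly, in your description of $k^1_{ij}$). After the swap $(v',I')\leftrightarrow(v_*,I_*)$ the kernel is $k^3_{ij}(Z,Z_*)=\int (M'_jM'_{j*})^{1/2}(\cdots)\,W_{ij}(Z,Z'|Z_*,Z'_*)\,dZ'dZ'_*$: the only Maxwellians left are evaluated at the two $j$-species post-collisional states, and once the delta measures are resolved (see \eqref{Wk3} and \eqref{computev}) their exponent controls only $|V_\perp+\om_*|^2$ (an integration variable), the shifted parallel quantity $|u|-2|v|\cos\tilde{\theta}+2\De_* I/(m_i|u|)$, and $|u|^2$. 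There is no factor $\exp[-c(|v|^2+|v_*|^2+I+I_*)]$, so your proposed splitting of the $v_*$-integral into $\{|v_*|\le|v|/2\}$ and its complement has nothing to feed on, and it produces neither the $|v|^{-1/2}$ nor the $I^{-1/8}$ decay. In the paper the decay in $|v|$ comes from a Grad-type integration of the shifted parallel Gaussian over $v_*$, treating $|u|\ge|v|$ and $|u|\le|v|$ separately (\eqref{4boundk3}--\eqref{6boundk3}), and the decay in $I$ comes from the cross-section denominator via $\widetilde{E}_{ij}\ge I$, distributed over the regimes $I\lessgtr 1$, $I_*\lessgtr 1$ so that the $I_*,I',I'_*$ integrals remain finite (\eqref{boundtildeE}, \eqref{boundI}); none of this is in your sketch, and without it \eqref{k3pp} is unproved. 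A related slip occurs for $k^1_{ij}$: that kernel contains $(M'_iM'_{j*})^{1/2}$, not an explicit $(M_{j*})^{1/2}$; the Gaussian decay in all of $v,I,v_*,I_*$ is recovered by converting post- into pre-collisional Maxwellians through the collision invariance \eqref{changeM}, after which no splitting of the $v_*$-integral is needed.
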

\begin{proof}
We rewrite \eqref{DefKij} as
\begin{align}\label{Kij}
	K_{ij}f&=\int_{\CZ_j\times \CZ_i\times \CZ_j} \frac{ (M_{j*}M'_iM'_{j*})^{1/2}}{(II')^{\de_i/4-1/2}(I_*I'_*)^{\de_j/4-1/2}}\big(\frac{f'_{j*}}{(M'_{j*})^{1/2}}+\frac{f'_i}{(M'_i)^{1/2}}-\frac{f_{j*}}{(M_{j*})^{1/2}}\big) W_{ij}dZ_*dZ^\prime dZ^\prime_*\notag\\
	&=K^2_{ij}f+K^3_{ij}f-K^1_{ij}f.
\end{align}
First consider $k^1_{ij}(Z,Z_*)$. It is straightforward to obtain that
\begin{align*}
	K^1_{ij}f&=\int_{\CZ_j\times \CZ_i\times \CZ_j} \frac{ (M_{j*}M'_iM'_{j*})^{1/2}}{(II')^{\de_i/4-1/2}(I_*I'_*)^{\de_j/4-1/2}}\frac{f_{j*}}{(M_{j*})^{1/2}} W_{ij}dZ_*dZ^\prime dZ^\prime_*\notag\\
	&=\int_{\CZ_j} k^1_{ij}(Z,Z_*)f_{j}(t,x,Z_*)dZ^\prime_*,  
\end{align*}
with
\begin{align}\label{rek1ij}
	k^1_{ij}(Z,Z_*)&=\int_{\CZ_i\times \CZ_j} \frac{ (M'_iM'_{j*})^{1/2}}{(II')^{\de_i/4-1/2}(I_*I'_*)^{\de_j/4-1/2}}W_{ij}(Z,Z_*|Z',Z'_*)dZ^\prime dZ^\prime_*.  
\end{align}
Denoting $V_{ij}=\frac{m_i v+m_j v_*}{m_i+m_j}$, $V_{ij}^\prime=\frac{m_i v'+m_j v'_*}{m_i+m_j}$, we have
\begin{align*}
	k^1_{ij}(Z,Z_*)&=\int_{\CZ_i\times \CZ_j} \frac{ (M'_iM'_{j*})^{1/2}}{(II')^{\de_i/4-1/2}(I_*I'_*)^{\de_j/4-1/2}}(m_i+m_j)^2m_im_j(I)^{\de_i/2-1}(I_*)^{\de_j/2-1}\si_{ij}\frac{|u|}{|u'|}\notag\\
	&\qquad\times\bm{\de}_3(v+v_*-v^\prime-v^\prime_*)\bm{\de}_1(\frac{|v|^2}{2}+\frac{|v_*|^2}{2}-\frac{|v^\prime|^2}{2}-\frac{|v^\prime_*|^2}{2}+(I-I^\prime)\chi_{\{i>p\}}+(I_*-I^\prime_*)\chi_{\{j>p\}})dZ^\prime dZ^\prime_*\notag\\
	&=\int_{\CZ_i\times \CZ_j} \frac{ (M'_iM'_{j*})^{1/2}}{(II')^{\de_i/4-1/2}(I_*I'_*)^{\de_j/4-1/2}}(I)^{\de_i/2-1}(I_*)^{\de_j/2-1}\si_{ij}\frac{|u|}{|u'|^2}\notag\\
	&\qquad\times\bm{\de}_3(V_{ij}-V'_{ij})\bm{\de}_1(\sqrt{|u|^2-\frac{2\De I}{\mu_{ij}}}-|u^\prime|)dZ^\prime dZ^\prime_*.  
\end{align*}
Letting $\om=\frac{u^\prime}{|u^\prime|}$ and noticing that
\begin{align}\label{changeofvariables}
	dv'dv'_*=|u'|^2dV'_{ij}d|u'|d\om,\end{align} for $p+1\leq i,j\leq n$, it holds that
\begin{align}\label{rek1}
	k^1_{ij}(v,I,v_*,I_*)&=\int_{(\R^3)^2\times (\R_+)^2} \frac{ (M'_iM'_{j*})^{1/2}}{(II')^{\de_i/4-1/2}(I_*I'_*)^{\de_j/4-1/2}}(I)^{\de_i/2-1}(I_*)^{\de_j/2-1}\si_{ij}\frac{|u|}{|u'|^2}\notag\\
	&\qquad\times\bm{\de}_3(V_{ij}-V'_{ij})\bm{\de}_1(\sqrt{|u|^2-\frac{2\De I}{\mu_{ij}}}-|u^\prime|)|u'|^2d|u^\prime|d\om dV^\prime_{ij} dI^\prime dI^\prime_*\notag\\
	&=\int_{\S^2\times (\R_+)^2} \frac{ (M'_iM'_{j*})^{1/2}}{(I')^{\de_i/4-1/2}(I'_*)^{\de_j/4-1/2}}(I)^{\de_i/4-1/2}(I_*)^{\de_j/4-1/2}\si_{ij}|u|d\om  dI^\prime dI^\prime_*,
\end{align}
which, after combining with \eqref{si} and the fact that
\begin{align}\label{changeM}
	\frac{MM_*}{(I)^{\de_i/2-1}(I_*)^{\de_j/2-1}}=\frac{M'M'_*}{(I')^{\de_i/2-1}(I'_*)^{\de_j/2-1}},
\end{align} yields
\begin{align}\label{Gk1}
	k^1_{ij}(v,I,v_*,I_*)&\leq Ce^{-\frac{m_i|v|^2}{4}-\frac{m_j|v_*|^2}{4}-\frac{I}{2}-\frac{I_*}{2}}\notag\\
	&\qquad\times\int_{\S^2\times (\R_+)^2} (I)^{\de_i/4-1/2}(I_*)^{\de_j/4-1/2}\sqrt{|u|^2-\frac{2\De I}{\mu_{ij}}}\frac{(I')^{\de_i/2-1}(I'_*)^{\de_j/2-1}}{E_{ij}^{(\eta+\de_i+\de_j)/2}}d\om  dI^\prime dI^\prime_*\notag\\
	&\leq Ce^{-\frac{m_i|v|^2}{8}-\frac{m_j|v_*|^2}{8}-\frac{I}{4}-\frac{I_*}{4}}(I)^{\de_i/4-1/2}(I_*)^{\de_j/4-1/2}\notag\\
	&\qquad\times\int_{\S^2\times (\R_+)^2} e^{-\frac{m_i|v'|^2}{8}-\frac{m_j|v'_*|^2}{8}-\frac{I'}{4}-\frac{I'_*}{4}}\frac{(I')^{\de_i/2-1}(I'_*)^{\de_j/2-1}}{E_{ij}^{(\eta+\de_i+\de_j-1)/2}}d\om  dI^\prime dI^\prime_*.
\end{align}
By $E_{ij}=E'_{ij}$, one gets that
$$
E_{ij}^{(\de_i+\de_j)/2}\geq (I_*)^{1/2}(I^\prime)^{\de_i/2-1/4}( I^\prime_*)^{\de_j/2-1/4}.
$$
Gathering the previous two inequalities, we to obtain that
\begin{align}\label{k1pp}
	k^1_{ij}(v,I,v_*,I_*)&\leq Ce^{-\frac{m_i|v|^2}{8}-\frac{m_j|v_*|^2}{8}-\frac{I}{4}-\frac{I_*}{4}}(I)^{\de_i/4-1/2}(I_*)^{\de_j/4-1/2}E_{ij}^{(1-\eta)/2}\notag\\
	&\qquad\times\int_{ (\R_+)^2} e^{-\frac{m_i|v'|^2}{8}-\frac{m_j|v'_*|^2}{8}-\frac{I'}{4}-\frac{I'_*}{4}}\frac{(I')^{\de_i/2-1}(I'_*)^{\de_j/2-1}}{(I_*)^{1/2}(I^\prime)^{\de_i/2-1/4}( I^\prime_*)^{\de_j/2-1/4}} dI^\prime dI^\prime_*\notag\\
	&\leq Ce^{-\frac{m_i|v|^2}{16}-\frac{m_j|v_*|^2}{16}-\frac{I}{8}-\frac{I_*}{8}}(I_*)^{\de_j/4-1}.
\end{align}
Using $e^\frac{\de|v-v_*|^2}{64}\leq e^\frac{m_i|v|^2+m_j|v_*|^2}{32}$, note that $\de\leq\min_{1\leq i\leq n}m_i$, for the case $i,j>p$, it holds by \eqref{k1pp} that
\begin{align}\label{kpp1}
	\int_{\R^3\times\R_+}& k^1_{ij}(v,I,v_*,I_*)\frac{w_{i\be}(v,I)}{w_{j\be}(v_*,I_*)}e^\frac{\de|v-v_*|^2}{64}(1+I_*)^{\frac{1}{8}} dv_*dI_*\notag\\
	\leq&C\int_{\R^3\times\R_+}e^{-\frac{m_i|v|^2}{32}-\frac{m_j|v_*|^2}{32}-\frac{I}{16}-\frac{I_*}{16}}(I_*)^{\de_j/4-1}(1+I_*)^{\frac{1}{8}}dv_*dI_* \leq \frac{C}{1+|v|+I^\frac{1}{8}},
\end{align}
which shows \eqref{kpp}.  We continue with the estimation of $k^3_{ij}(Z,Z_*)$.

Let us derive a general representation for $k^3_{ij}$ for all $1\leq i,j\leq n$. Recall from \eqref{Kij} that
\begin{align*}
	K^3_{ij}f&=\int_{\CZ_j\times \CZ_i\times \CZ_j} \frac{ (M_{j*}M'_iM'_{j*})^{1/2}}{(II')^{\de_i/4-1/2}(I_*I'_*)^{\de_j/4-1/2}}\frac{f'_i}{(M'_i)^{1/2}}W_{ij}(Z,Z_*|Z',Z'_*)dZ_*dZ^\prime dZ^\prime_*.
\end{align*}
We interchange $Z_*$ and $Z'$ to get that
\begin{align*}
	K^3_{ij}f&=\int_{\CZ_i\times \CZ_j\times \CZ_j} \frac{ (M'_{j}M_{i*}M'_{j*})^{1/2}}{(II_*)^{\de_i/4-1/2}(I'I'_*)^{\de_j/4-1/2}}\frac{f_{i*}}{(M_{i*})^{1/2}}W_{ij}(Z,Z'|Z_*,Z'_*)dZ_*dZ^\prime dZ^\prime_*,
\end{align*}
which yields
\begin{align}\label{Gk3}
	k^3_{ij}=\int_{(\CZ_j)^2} \frac{ (M'_{j}M'_{j*})^{1/2}}{(II_*)^{\de_i/4-1/2}(I'I'_*)^{\de_j/4-1/2}}W_{ij}(Z,Z'|Z_*,Z'_*)dZ^\prime dZ^\prime_*.
\end{align}
Let us introduce
\begin{align}\label{notationk3}
	&u=v-v_*,\qquad u'=v'-v'_*,\qquad \tilde{u}=v-v',\qquad u_*=v_*-v'_*,\notag\\
	&\text{and}\qquad\De_* I=(I_*-I)\chi_{\{i>p\}}+(I'_*-I')\chi_{\{j>p\}},\qquad \zeta_+=(v_*-v^\prime)\cdot\frac{u}{|u|}.
\end{align}
 Using the definition of $W_{ij}$ in \eqref{DefW} one has that
\begin{align}\label{Wk3}
	W_{ij}(Z,Z'|Z_*,Z'_*)=&\frac{(m_i+m_j)^2}{m_j^2}(I)^{\de_i/2-1}(I')^{\de_j/2-1}\si_{ij}{(|\tilde{u}|,\frac{\tilde{u}\cdot u_*}{|\tilde{u}||u_*|},I,I',I_*,I^\prime_*)}\frac{|\tilde{u}|}{|u_*||u|}\notag\\
	&\times\bm{\de}_3(\frac{m_i}{m_j}u+u')\bm{\de}_1(\zeta_+-\frac{m_i-m_j}{2m_j}|u|-\frac{\De_* I}{m_i|u|}).
\end{align}
Notice the momentum and energy conservation induced by \eqref{Wk3}
\begin{align}\label{conservationk3}
	m_iv+m_jv'&=m_iv_*+m_jv'_*,\qquad\text{and}\notag\\
	m_i\frac{|v|^2}{2}+m_j\frac{|v'|^2}{2}+I\chi_{\{i>p\}}+I'\chi_{\{j>p\}}&=m_i\frac{|v_*|^2}{2}+m_j\frac{|v'_*|^2}{2}+I_*\chi_{\{i>p\}}+I'_*\chi_{\{j>p\}}.
\end{align}
Let us particularize to $i,j>p$. We combine the above identity with the fact that 
\begin{align}\label{cv}
	dv'dv'_*=du'd(v'_*-v)=du'd\zeta_+d\om_*,\qquad \om_*:=v'_*-v-\zeta_+ n,\qquad n:=\frac{u}{|u|},
\end{align} to get that
\begin{align}\label{k3}
	k^3_{ij}=&k^3_{ij}(v,I,v_*,I_*)\notag\\
	=&\int_{\R^3\times\R_+\times(\R^3)^{\perp n}\times (\R_+)^2} (M'_{j}M'_{j*})^{1/2}\frac{(m_i+m_j)^2}{m_j^2}\big(\frac{I}{I_*}\big)^{\de_i/4-1/2}\big(\frac{I'}{I'_*}\big)^{\de_j/4-1/2}\si_{ij}{(|\tilde{u}|,\frac{u'\cdot u_*}{|u'||u_*|},I,I',I_*,I^\prime_*)}\notag\\
	&\qquad\times \frac{|\tilde{u}|}{|u_*||u|} \bm{\de}_3(\frac{m_i}{m_j}u+u')\bm{\de}_1(\zeta_+-\frac{m_i-m_j}{2m_j}|u|-\frac{\De_* I}{m_i|u|})du^\prime d\zeta_+ d\om_* dI^\prime dI^\prime_*\notag\\
	=&\frac{(m_i+m_j)^2}{m_j^2}\big(\frac{I}{I_*}\big)^{\de_i/4-1/2}\notag\\
	&\qquad\times \int_{(\R^3)^{\perp n}\times (\R_+)^2} (M'_{j}M'_{j*})^{1/2}\big(\frac{I'}{I'_*}\big)^{\de_j/4-1/2}\si_{ij}{(|\tilde{u}|,\frac{u'\cdot u_*}{|u'||u_*|},I,I',I_*,I^\prime_*)}\frac{|\tilde{u}|}{|u_*||u|} d\om_* dI^\prime dI^\prime_*,
\end{align} 
where $(\R^3)^{\perp n}$ denotes the plane generated by $\om_*\cdot n=0$.
It follows from \eqref{si} that
 \begin{align*}
 	\si_{ij}{(|\tilde{u}|,\frac{u'\cdot u_*}{|u'||u_*|},I,I',I_*,I^\prime_*)}\leq C\sqrt{|\tilde{u}|^2-\frac{2\De_* I}{\mu_{ij}}}\frac{(I_* )^{\de_i/2-1}(I^\prime_*)^{\de_j/2-1}}{|\tilde{u}|\widetilde{E}_{ij}^{(\eta+\de_i\chi_{\{i>p\}}+\de_j\chi_{\{j>p\}})/2}},
 \end{align*} 
where $\widetilde{E}_{ij}=\frac{\mu_{ij}}{2}|\tilde{u}|^2+I\chi_{\{i>p\}}+I'\chi_{\{j>p\}}.$
Substituting  the above equality into \eqref{k3} we obtain that
\begin{align}\label{2boundk3}
	k^3_{ij}(v,I,v_*,I_*)\leq&C(II_*)^{\de_i/4-1/2}\frac{1}{|u|}\notag\\
	&\times \int_{(\R^3)^{\perp n}\times (\R_+)^2} e^{-\frac{m_j|v'|^2}{4}-\frac{m_j|v'_*|^2}{4}-\frac{I'}{2}-\frac{I'_*}{2}}(I'I^\prime_*)^{\de_j/2-1}\frac{1}{\widetilde{E}_{ij}^{(\eta+\de_i+\de_j)/2}} d\om_* dI^\prime dI^\prime_*.
\end{align}
Using the pointwise estimates
\begin{align}\label{controle}
e^\frac{\de|v-v_*|^2}{64}\leq e^\frac{\de m^2_j|v'-v'_*|^2}{64 m^2_i}\leq e^\frac{m_j|v'|^2+m_j|v'_*|^2}{32},
\end{align}
and
\begin{equation*}
\frac{w^2_{i\be}(v,I)}{w^2_{i\be}(v_*,I_*)}\leq C (1+\big|m_i|v|^2-m_i|v_*|^2+I-I_*\big|^2)^\be\leq C e^\frac{m_j|v'|^2+m_j|v'_*|^2+2I'+2I'_*}{32},
\end{equation*}
one further obtains that
\begin{align*}
	k^3_{ij}(v,I,&v_*,I_*)\frac{w_{i\be}(v,I)}{w_{i\be}(v_*,I_*)}e^\frac{\de|v-v_*|^2}{64}(1+I_*)^{\frac{1}{8}} \leq C(II_*)^{\de_i/4-1/2}\frac{1}{|u|} \\
	&\times\int_{(\R^3)^{\perp n}\times (\R_+)^2} e^{-\frac{m_j|v'|^2}{8}-\frac{m_j|v'_*|^2}{8}-\frac{I'}{4}-\frac{I'_*}{4}}(I'I^\prime_*)^{\de_j/2-1}(1+I_*)^{1/8}\frac{1}{\widetilde{E}_{ij}^{(\eta+\de_i+\de_j)/2}} d\om_* dI^\prime dI^\prime_*.
\end{align*}
Define
\begin{align}\label{notationsk3}
V=\frac{v+v_*}{2},\qquad V_{\shortparallel}:=(V\cdot n)n,\qquad V:=V_\shortparallel+V_\perp,\qquad \cos\tilde{\theta}=n\cdot\frac{v}{|v|}\,,
\end{align} 
and $\zeta_-:=\frac{\De_* I}{m_i|u|}-\frac{m_i-m_j}{2m_j}|u|$.  A direct calculation gives that
\begin{align}\label{computev}
	\frac{1}{8}(|v^\prime|^2+|v^\prime_*|^2)&=\frac{1}{8}(|v_*-\zeta_- n+\om_*|^2+|v-\zeta_+ n+\om_*|^2)\notag\\
	&= \frac{1}{4}\big(|V-\frac{\De_* I}{m_i|u|} n+\om_*|^2+\frac{m_i^2}{4m_j^2}|u|\big) \notag\\
	&=\frac{1}{4}\left( |V_\perp+\om_*|^2+\frac{(|u|-2|v|\cos\tilde{\theta}+2\frac{\De_* I}{m_i|u|})^2}{4}+\frac{m_i^2|u|^2}{4m_j^2} \right).
\end{align} 
The combination of the previouus two relations shows that
\begin{align}\label{3boundk3}
	k^3_{ij}&(v,I,v_*,I_*)\frac{w_{i\be}(v,I)}{w_{i\be}(v_*,I_*)}e^\frac{\de|v-v_*|^2}{64}(1+I_*)^{\frac{1}{8}}\notag\\ \leq&C(II_*)^{\de_i/4-1/2}\frac{1}{|u|} \int_{(\R^3)^{\perp n}\times (\R_+)^2}\exp[-\frac{m_j}{4}\big( |V_\perp+\om_*|^2+\frac{(|u|-2|v|\cos\tilde{\theta}+2\frac{\De_* I}{m_i|u|})^2}{4}+\frac{m_i^2|u|^2}{4m_j^2} \big)]\notag\\
	&\qquad\qquad\qquad\qquad\qquad\times e^{-\frac{I'}{4}-\frac{I'_*}{4}}(I'I^\prime_*)^{\de_j/2-1}(1+I_*)^{1/8}\frac{1}{\widetilde{E}_{ij}^{(\eta+\de_i+\de_j)/2}} d\om_* dI^\prime dI^\prime_*.
\end{align}
Then, we use the fact that \begin{align*}
	\widetilde{E}_{ij}=\frac{\mu_{ij}}{2}|\tilde{u}|^2+I+I'\geq C|\tilde{u}|\sqrt{\frac{\mu_{ij}}{2}|\tilde{u}|^2+I+I'-I_*-I^\prime_*}=C|\tilde{u}||u_*|\geq C|\om_*|^2
\end{align*}
to get that
\begin{align}\label{1boundk3}
	k^3_{ij}&(v,I,v_*,I_*)\frac{w_{i\be}(v,I)}{w_{i\be}(v_*,I_*)}e^\frac{\de|v-v_*|^2}{64}(1+I_*)^{\frac{1}{8}}\notag\\ \leq&C(II_*)^{\de_i/4-1/2}\frac{1}{|u|} \int_{(\R^3)^{\perp n}\times (\R_+)^2}\exp[-\frac{m_j}{4}\big( |V_\perp+\om_*|^2+\frac{(|u|-2|v|\cos\tilde{\theta}+2\frac{\De_* I}{m_i|u|})^2}{4}+\frac{m_i^2|u|^2}{4m_j^2} \big)]\frac{1}{|\om_*|^\eta}\notag\\
	&\qquad\qquad\qquad\qquad\qquad\times e^{-\frac{I'}{4}-\frac{I'_*}{4}}(I'I^\prime_*)^{\de_j/2-1}(1+I_*)^{1/8}\frac{1}{\widetilde{E}_{ij}^{(\de_i+\de_j)/2}} d\om_* dI^\prime dI^\prime_*\notag\\ \leq&\frac{C}{|u|} \int_{ (\R_+)^2}\exp[-\frac{m_j}{4}\big( |V_\perp+\om_*|^2+\frac{(|u|-2|v|\cos\tilde{\theta}+2\frac{\De_* I}{m_i|u|})^2}{4}+\frac{m_i^2|u|^2}{4m_j^2} \big)]\notag\\
	&\qquad\qquad\qquad\qquad\qquad\qquad\times e^{-\frac{I'}{4}-\frac{I'_*}{4}}[(II_*)^{\de_i/4-1/2}(I'I^\prime_*)^{\de_j/2-1}\frac{(1+I_*)^{1/8}}{\widetilde{E}_{ij}^{(\de_i+\de_j)/2}}]  dI^\prime dI^\prime_*.
\end{align}
Apply the identity $\widetilde{E}_{ij}=\frac{\mu_{ij}}{2}|\tilde{u}|^2+I+I'=\frac{\mu_{ij}}{2}|u_*|^2+I_*+I'_*$ to get
\begin{align}\label{boundtildeE}
	\frac{1}{\widetilde{E}_{ij}^{(\de_i+\de_j)/2}}\leq& \frac{1}{(I)^{\de_i/4-1/2}(I_*)^{\de_i/4}(I'I'_*)^{\de_j/4+1/4}}\chi_{\{I\leq 1,I_*\leq 1\}}+\frac{1}{(I_*)^{(\de_i+\de_j)/2}}\chi_{\{I\leq 1,I_*\geq 1\}}\notag\\
	&\qquad+\frac{1}{(I)^{(\de_i+\de_j)/2}}\chi_{\{I\geq 1,I_*\leq 1\}}+\frac{1}{(I)^{(\de_i+\de_j-1)/4}(I_*)^{(\de_i+\de_j+1)/4}}\chi_{\{I\geq 1,I_*\geq 1\}},
\end{align}
which implies that
\begin{align}\label{boundI}
	\int_{ (\R_+)^3}&e^{-\frac{I'}{4}-\frac{I'_*}{4}}(II_*)^{\de_i/4-1/2}(I'I^\prime_*)^{\de_j/2-1}\frac{(1+I_*)^{1/8}}{\widetilde{E}_{ij}^{(\de_i+\de_j)/2}}dI^\prime dI^\prime_*dI_*\notag\\
	\leq&\int_{ (\R_+)^3}e^{-\frac{I'}{4}-\frac{I'_*}{4}}\big\{ \frac{(I'I'_*)^{\de_j/4-3/4}}{(I_*)^{1/2}}\chi_{\{I\leq 1,I_*\leq 1\}}+\frac{(I)^{\de_i/4-1/2}(I'I^\prime_*)^{\de_j/2-1}}{(I_*)^{\de_i/4+\de_j/2+1/2}}\chi_{\{I\leq 1,I_*\geq 1\}}\notag\\
	&+\frac{(I_*)^{\de_i/4-1/2}(I'I^\prime_*)^{\de_j/2-1}(1+I_*)^{1/8}}{(I)^{\de_i/4+\de_j/2+1/2}}\chi_{\{I\geq 1,I_*\leq 1\}}+\frac{(I'I^\prime_*)^{\de_j/2-1}(1+I_*)^{1/8}}{(I)^{\de_j/4+1/4}(I_*)^{\de_j/4+3/4}}\chi_{\{I\geq 1,I_*\geq 1\}}\big\}dI^\prime dI^\prime_*dI_*\notag\\
	\leq& \frac{C}{1+I^{1/2}}.
\end{align}
Now we can bound the integral \eqref{1boundk3}. It follows from \eqref{1boundk3} and \eqref{boundI} that
\begin{align}\label{4boundk3}
	\int_{\R^3\times (\R_+)^3}&k^3_{ij}(v,I,v_*,I_*)\frac{w_{i\be}(v,I)}{w_{i\be}(v_*,I_*)}e^\frac{\de|v-v_*|^2}{64}(1+I_*)^{\frac{1}{8}}dI^\prime dI^\prime_*dI_*dv_*\notag\\ \leq&\int_{ \R^3\times (\R_+)^3}\frac{C}{|u|} e^{-\frac{m_j}{4}\frac{m_i^2|u|^2}{4m_j^2} }e^{-\frac{I'}{4}-\frac{I'_*}{4}}[(II_*)^{\de_i/4-1/2}(I'I^\prime_*)^{\de_j/2-1}\frac{(1+I_*)^{1/8}}{\widetilde{E}_{ij}^{(\de_i+\de_j)/2}}]  dI^\prime dI^\prime_*dI_*dv_*\notag\\ \leq&\frac{C}{1+I^{1/2}}.
\end{align}
We seek to obtain the decay in $v$ as well. For $|u|\geq |v|$ we have from \eqref{1boundk3} and \eqref{boundI} that
\begin{align}\label{5boundk3}
	\int_{\R^3\times (\R_+)^3}&k^3_{ij}(v,I,v_*,I_*)\frac{w_{i\be}(v,I)}{w_{i\be}(v_*,I_*)}e^\frac{\de|v-v_*|^2}{64}(1+I_*)^{\frac{1}{8}}dI^\prime dI^\prime_*dI_*dv_*\notag\\ \leq&\int_{ \R^3\times (\R_+)^3}\frac{C}{|v|} e^{-\frac{m_j}{4}\frac{m_i^2|u|^2}{4m_j^2} }e^{-\frac{I'}{4}-\frac{I'_*}{4}}[(II_*)^{\de_i/4-1/2}(I'I^\prime_*)^{\de_j/2-1}\frac{(1+I_*)^{1/8}}{\widetilde{E}_{ij}^{(\de_i+\de_j)/2}}]  dI^\prime dI^\prime_*dI_*dv_*\notag\\ \leq&\frac{C}{|v|(1+I^{1/2})}.
\end{align}
For $|u|\leq |v|$ we set $r=|u|$ and use $v\cdot(v-v_*)=r|v|\cos\theta$ to get that
\begin{align}\label{6boundk3}
	\int_{\R^3\times (\R_+)^3}&k^3_{ij}(v,I,v_*,I_*)\frac{w_{i\be}(v,I)}{w_{i\be}(v_*,I_*)}e^\frac{\de|v-v_*|^2}{64}(1+I_*)^{\frac{1}{8}}dI^\prime dI^\prime_*dI_*dv_*\notag\\ \leq&\int_{  (\R_+)^3}\Big[\int_0^{|v|}\int_0^\pi r\exp[-m_j[(r-2|v|\cos\theta+2\frac{\De_* I}{m_ir})^2+\frac{m_i^2r^2}{m_j}]/16]\sin\theta d\theta dr\Big]\notag\\
	&\qquad\times e^{-\frac{I'}{4}-\frac{I'_*}{4}}[(II_*)^{\de_i/4-1/2}(I'I^\prime_*)^{\de_j/2-1}\frac{(1+I_*)^{1/8}}{\widetilde{E}_{ij}^{(\de_i+\de_j)/2}}]  dI^\prime dI^\prime_*dI_*\notag\\ \leq&\frac{C}{|v|(1+I^{1/2})}.
\end{align}
Collecting the above three estimates it follows that
\begin{align}\label{kpp3}
	&\int_{\R^3\times (\R_+)^3}k^3_{ij}(v,I,v_*,I_*)\frac{w_{i\be}(v,I)}{w_{i\be}(v_*,I_*)}e^\frac{\de|v-v_*|^2}{64}(1+I_*)^{\frac{1}{8}}dI^\prime dI^\prime_*dI_*dv_*\leq\frac{C}{1+|v|+I^{1/2}},
\end{align}
which implies \eqref{k3pp}.  We now focus on the last case $k^2_{ij}$ by first assuming $m_i\neq m_j$. Recall \eqref{DefKij} and exchange $Z_*$ and $Z'_*$ to obtain that
\begin{align*}
	K^2_{ij}f&=\int_{\CZ_j\times \CZ_i\times \CZ_j} \frac{ (M_{j*}M'_iM'_{j*})^{1/2}}{(II')^{\de_i/4-1/2}(I_*I'_*)^{\de_j/4-1/2}}\frac{f'_{j*}}{(M'_{j*})^{1/2}}W_{ij}(Z,Z_*|Z',Z'_*)dZ_*dZ^\prime dZ^\prime_*\notag\\
	&=\int_{\CZ_j\times \CZ_i\times \CZ_j} \frac{ (M_{j*}M'_iM'_{j*})^{1/2}}{(II')^{\de_i/4-1/2}(I_*I'_*)^{\de_j/4-1/2}}\frac{f_{j*}}{(M_{j*})^{1/2}}W_{ij}(Z,Z'_*|Z',Z_*)dZ_*dZ^\prime dZ^\prime_*,
\end{align*}
which yields
\begin{align}\label{rek2ij}
	k^2_{ij}=k^2_{ij}(Z,Z_*)&=\int_{\CZ_i\times \CZ_j} \frac{ (M'_iM'_{j*})^{1/2}}{(II')^{\de_i/4-1/2}(I_*I'_*)^{\de_j/4-1/2}}W_{ij}(Z,Z'_*|Z',Z_*)dZ^\prime dZ^\prime_*.
\end{align}
Denote
\begin{align*}
\dis u_{ij}&=\frac{m_iv-m_jv_*}{m_i-m_j}, \qquad\dis u'_{ij}=\frac{m_iv'-m_jv'_*}{m_i-m_j},\qquad u=v-v_*,\qquad u'=v'-v'_*,\\
\hat{u}&=v'_* - v,\quad \bar{u}=v_*-v', \qquad  \text{and} \qquad \widehat{\De} I=(I'-I)\chi_{\{i>p\}}+(I_*-I'_*)\chi_{\{j>p\}}.
\end{align*}
From the definition of $W_{ij}$ in \eqref{DefW}, it holds that
\begin{align}\label{Wk2}
	W_{ij}(Z,Z'_*|Z',Z_*)=\frac{(m_i+m_j)^2}{(m_i-m_j)^2}&(I)^{\de_i/2-1}(I'_*)^{\de_j/2-1}\si_{ij}{(|\hat{u}|,\frac{\hat{u}\cdot \bar{u}}{|\hat{u}||\bar{u}|},I,I'_*,I',I_*)}\frac{|\hat{u}|}{|u'||\bar{u}|}\notag\\
	&\times\bm{\de}_3(u_{ij}-u'_{ij})\bm{\de}_1(|u'|-\sqrt{|u|^2-2\frac{m_j-m_i}{m_im_j}\widehat{\De} I}).
\end{align}
Such form holds for all $1\leq i,j\leq n$.  In the following we treat the case of Poly-Poly interactions $i,j>p$.  First, note that 
\begin{align*}
	dv'dv'_*=|u'|^2d|u'|du'_{ij}d\om, 
\end{align*}
where $\om:=\frac{u'}{|u'|}$.  It follows that
\begin{align}\label{rek2}
	k^2_{ij}(Z,Z_*)&=\frac{(m_i+m_j)^2}{(m_i-m_j)^2}\int_{\S^2\times (\R_+)^2}  (M'_iM'_{j*})^{1/2}\left(\frac{I}{I'}\right)^{\de_i/4-1/2}\left(\frac{I'_*}{I_*}\right)^{\de_j/4-1/2}\notag\\
	&\qquad\qquad\qquad\qquad\times\si_{ij}{(|\hat{u}|,\frac{\hat{u}\cdot \bar{u}}{|\hat{u}||\bar{u}|},I,I'_*,I',I_*)}\frac{|\hat{u}||u'|}{|\bar{u}|}d\om dI'dI'_*.
\end{align}
Moreover, define $\hat{\om}:=\frac{\bar{u}}{|\bar{u}|}$ and $\bar{\om}:=\frac{\hat{u}}{|\hat{u}|}$. Similar calculations as in \eqref{rek2} show that
\begin{align}
	k^2_{ij}(Z,Z_*)&=\int_{\S^2\times (\R_+)^2}  (M'_iM'_{j*})^{1/2}\left(\frac{I}{I'}\right)^{\de_i/4-1/2}\left(\frac{I'_*}{I_*}\right)^{\de_j/4-1/2}\si_{ij}{(|\hat{u}|,\frac{\hat{u}\cdot \bar{u}}{|\hat{u}||\bar{u}|},I,I'_*,I',I_*)}\hat{u}d\hat{\om} dI'dI'_*\label{rek21}\\
	&=\int_{\S^2\times (\R_+)^2}  (M'_iM'_{j*})^{1/2}\left(\frac{I'}{I}\right)^{\de_i/4-1/2}\left(\frac{I_*}{I'_*}\right)^{\de_j/4-1/2}\si_{ij}{(|\bar{u}|,\frac{\hat{u}\cdot \bar{u}}{|\hat{u}||\bar{u}|},I',I_*,I,I'_*)}\bar{u}d\bar{\om} dI'dI'_*\label{rek22}.
\end{align}
Notice that, from \eqref{si}, it yields
\begin{align*}
	\si_{ij}{(|\hat{u}|,\frac{\hat{u}\cdot \bar{u}}{|\hat{u}||\bar{u}|},I,I'_*,I',I_*)}\frac{|\hat{u}||u'|}{|\bar{u}|}\leq C\frac{(I^\prime )^{\de_i/2-1}(I_*)^{\de_j/2-1}}{\widehat{E}_{ij}^{(\eta+\de_i\chi_{\{i>p\}}+\de_j\chi_{\{j>p\}})/2}}|u'|
\end{align*}
with
\begin{equation*}
\widehat{E}_{ij}=\frac{\mu_{ij}}{2}|\hat{u}|^2+I\chi_{\{i>p\}}+I'_*\chi_{\{j>p\}}=\frac{\mu_{ij}}{2}|\bar{u}|^2+I'\chi_{\{i>p\}}+I_*\chi_{\{j>p\}}.
\end{equation*}
In order to control the $M'_iM'_{j*}$ in the integral, a straightforward calculation shows that
\begin{align}\label{boundv}
	m_i|v'|^2+m_j|v'_*|^2&=(\sqrt{m_i}-\sqrt{m_j})^2(|u'_{ij}|^2+\frac{m_im_j}{(m_i-m_j)^2}|u'|^2)+2\sqrt{m_im_j}(u'_{ij}-\frac{\sqrt{m_im_j}}{(m_i-m_j)}u')^2\notag\\
	&\geq (\sqrt{m_i}-\sqrt{m_j})^2(|u'_{ij}|^2+\frac{m_im_j}{(m_i-m_j)^2}|u'|^2)\notag\\
	&= (\sqrt{m_i}-\sqrt{m_j})^2|u_{ij}|^2+\frac{m_im_j}{(\sqrt{m_i}+\sqrt{m_j})^2}(|u|^2-2\frac{m_j-m_i}{m_im_j}\widehat{\De} I).
\end{align}
In order to bound $k^2_{ij}$, we split the integral into three cases. First, if $\min\{\hat{u},\bar{u}\}\geq 1$, then $|\hat{u}||\bar{u}|\geq \max\{\hat{u},\bar{u}\}\geq |u|$. One can further get
\begin{align*}
		\si_{ij}{(|\hat{u}|,\frac{\hat{u}\cdot \bar{u}}{|\hat{u}||\bar{u}|},I,I'_*,I',I_*)}\frac{|\hat{u}|}{|u'||\bar{u}|}\leq C\frac{(I^\prime )^{\de_i/2-1}(I_*)^{\de_j/2-1}}{\widehat{E}_{ij}^{(\de_i+\de_j)/2}}\frac{1}{(|\hat{u}||\bar{u}|)^{\eta/2}}\frac{1}{|u'|}\leq C\frac{(I^\prime )^{\de_i/2-1}(I_*)^{\de_j/2-1}}{\widehat{E}_{ij}^{(\de_i+\de_j)/2}}\frac{1}{|u|^{\eta/2}|u'|},
\end{align*}
which, together with \eqref{rek2} and \eqref{boundv}, yields
\begin{align}\label{boundk21}
	k^2_{ij}(Z,Z_*)&\leq C\int_{\S^2\times (\R_+)^2}  (M'_iM'_{j*})^{1/2}\left(II'\right)^{\de_i/4-1/2}\left(I'_*I_*\right)^{\de_j/4-1/2}{\widehat{E}_{ij}^{(\de_i+\de_j)/2}}\frac{1}{|u|^{\eta/2}}\frac{1}{|u'|}d\om dI'dI'_*\notag\\
	&\leq C\int_{\S^2\times (\R_+)^2} \exp[-\frac{(\sqrt{m_i}-\sqrt{m_j})^2}{8}|u_{ij}|^2-\frac{m_im_j}{8(\sqrt{m_i}+\sqrt{m_j})^2}(|u|^2-2\frac{m_j-m_i}{m_im_j}\widehat{\De} I)] \notag\\
	&\qquad\qquad\times e^{-\frac{I'}{2}-\frac{I'_*}{2}}(I)^{\de_i/4-1/2}(I')^{\de_i/2-1}(I_*)^{\de_j/4-1/2}(I'_*)^{\de_j/2-1}\frac{1}{\widehat{E}_{ij}^{(\de_i+\de_j)/2}}\frac{1}{|u|^{\eta/2}}d\om dI'dI'_*.
\end{align}
Second, if $\min\{\hat{u},\bar{u}\}<1$ and $|\hat{u}|=\max\{\hat{u},\bar{u}\}$, then $|\hat{u}|=\max\{\hat{u},\bar{u}\}\geq |u|$ and $|\bar{u}|<1$. It holds that
\begin{align*}
	\si_{ij}{(|\hat{u}|,\frac{\hat{u}\cdot \bar{u}}{|\hat{u}||\bar{u}|},I,I'_*,I',I_*)}|\hat{u}|\leq C\frac{(I^\prime )^{\de_i/2-1}(I_*)^{\de_j/2-1}}{\widehat{E}_{ij}^{(\de_i+\de_j)/2}}\frac{1}{|\hat{u}|^{\eta}}|\bar{u}|\leq C\frac{(I^\prime )^{\de_i/2-1}(I_*)^{\de_j/2-1}}{\widehat{E}_{ij}^{(\de_i+\de_j)/2}}\frac{1}{|u|^{\eta}}.
\end{align*}
We combine the above estimate with \eqref{rek21} and \eqref{boundv} to get that
\begin{align}\label{boundk22}
	k^2_{ij}(Z,Z_*)&\leq C\int_{\S^2\times (\R_+)^2} \exp[-\frac{(\sqrt{m_i}-\sqrt{m_j})^2}{8}|u_{ij}|^2-\frac{m_im_j}{8(\sqrt{m_i}+\sqrt{m_j})^2}(|u|^2-2\frac{m_j-m_i}{m_im_j}\widehat{\De} I)] \notag\\
	&\qquad\qquad\times e^{-\frac{I'}{2}-\frac{I'_*}{2}}(I)^{\de_i/4-1/2}(I')^{\de_i/2-1}(I_*)^{\de_j/4-1/2}(I'_*)^{\de_j/2-1}\frac{1}{\widehat{E}_{ij}^{(\de_i+\de_j)/2}}\frac{1}{|u|^{\eta}}d\hat{\om} dI'dI'_*.
\end{align}
Third, if $\min\{\hat{u},\bar{u}\}<1$ and $|\bar{u}|=\max\{\hat{u},\bar{u}\}$, then $|\bar{u}|=\max\{\hat{u},\bar{u}\}\geq |u|$ and $|\hat{u}|<1$. It holds that
\begin{align*}
	\si_{ij}{(|\bar{u}|,\frac{\hat{u}\cdot \bar{u}}{|\hat{u}||\bar{u}|},I',I_*,I,I'_*)}\bar{u}\leq C\frac{(I )^{\de_i/2-1}(I'_*)^{\de_j/2-1}}{\widehat{E}_{ij}^{(\de_i+\de_j)/2}}\frac{1}{|\bar{u}|^{\eta}}|\hat{u}|\leq C\frac{(I )^{\de_i/2-1}(I'_*)^{\de_j/2-1}}{\widehat{E}_{ij}^{(\de_i+\de_j)/2}}\frac{1}{|u|^{\eta}},
\end{align*}
which, combined with \eqref{rek22} and \eqref{boundv}, it shows that
\begin{align}\label{boundk23}
	k^2_{ij}(Z,Z_*)&\leq C\int_{\S^2\times (\R_+)^2} \exp[-\frac{(\sqrt{m_i}-\sqrt{m_j})^2}{8}|u_{ij}|^2-\frac{m_im_j}{8(\sqrt{m_i}+\sqrt{m_j})^2}(|u|^2-2\frac{m_j-m_i}{m_im_j}(I'-I+I_*-I'_*))] \notag\\
	&\qquad\qquad\times e^{-\frac{I'}{2}-\frac{I'_*}{2}}(I')^{\de_i/2-1}(I)^{\de_i/4-1/2}(I_*)^{\de_j/4-1/2}(I'_*)^{\de_j/2-1}\frac{1}{\widehat{E}_{ij}^{(\de_i+\de_j)/2}}\frac{1}{|u|^{\eta}}d\bar{\om} dI'dI'_*.
\end{align}
Combining \eqref{boundk21}, \eqref{boundk22} and \eqref{boundk23}, and using the fact that
\begin{align*}
	\frac{1}{\widehat{E}_{ij}^{(\de_i+\de_j)/2}}\leq C\Psi(I,I_*,I',I'_*),
\end{align*}
where
\begin{align}\label{DefPsi}
	\Psi(I,I_*,I',I'_*):=(I)^{-\psi_1}(I_*)^{-\psi_2}(I')^{-\psi_3}(I'_*)^{-\psi_4},
\end{align}
and $\psi_i>0$ can be arbitrarily chosen such that $\psi_1+\psi_2+\psi_3+\psi_4=(\de_i+\de_j)/2,$ we obtain that
\begin{align*}
	&k^2_{ij}(Z,Z_*)\leq C\int_{(\R_+)^2} \exp[-\frac{(\sqrt{m_i}-\sqrt{m_j})^2}{8}|u_{ij}|^2-\frac{m_im_j}{8(\sqrt{m_i}+\sqrt{m_j})^2}(|u|^2-2\frac{m_j-m_i}{m_im_j}(I'-I+I_*-I'_*))] \notag\\
	&\qquad\qquad\qquad\quad \times (1+\frac{1}{|u|^{\eta}}) e^{-\frac{I'}{2}-\frac{I'_*}{2}}(I')^{\de_i/2-1}(I)^{\de_i/4-1/2}(I_*)^{\de_j/4-1/2}(I'_*)^{\de_j/2-1}\Psi(I,I_*,I',I'_*)dI'dI'_*,
\end{align*}
which, together with
\begin{align*}
\frac{w^2_{i\be}(v,I)}{w^2_{j\be}(v_*,I_*)}\leq C (1+\big|m_i|v|^2-m_j|v_*|^2+2I-2I_*\big|^2)^\be\leq C e^\frac{m_i|v'|^2+m_j|v'_*|^2+2I'+2I'_*}{32},
\end{align*}
gives that
\begin{align}\label{boundk2}
	k^2_{ij}&(v,I,v_*,I_*)\frac{w_{i\be}(v,I)}{w_{j\be}(v_*,I_*)}e^\frac{\de|m_iv-m_jv_*|^2}{64}(1+I_*)^{1/8}\notag\\
	\leq& C\int_{(\R_+)^2} \exp[-\frac{(\sqrt{m_i}-\sqrt{m_j})^2}{16}|u_{ij}|^2-\frac{m_im_j}{16(\sqrt{m_i}+\sqrt{m_j})^2}(|u|^2-2\frac{m_j-m_i}{m_im_j}(I'-I+I_*-I'_*))]\notag\\
	&\quad\times (1+\frac{1}{|u|^{\eta}}) e^{-\frac{I'}{4}-\frac{I'_*}{4}}(I')^{\de_i/2-1}(I)^{\de_i/4-1/2}(I_*)^{\de_j/4-1/2}(I'_*)^{\de_j/2-1}\Psi(I,I_*,I',I'_*)(1+I_*)^{1/8}dI'dI'_*.
\end{align}
We consider \eqref{boundk2} in six possible cases. 

\smallskip
\noindent{\it Case 1. } $I\geq \ka(|v|^2+1)$  where is a small constant $\ka>0$ that will be determined after the last case is computed, and either $m_j>m_i$, $I\geq I_*$ or $m_j<m_i$, $I\leq I_*$.  In this case we choose $\psi_i$ in \eqref{DefPsi} such that
\begin{align*}
	\Psi=&\frac{1}{(I)^{\de_i/2+\de_j/2}}\chi_{\{I_*\leq1\}}+\frac{1}{(I)^{\de_i/2+\de_j/4-3/4}(I_*)^{\de_j/4+3/4}}\chi_{\{I_*>1\}}.
\end{align*} Notice $\Psi$ is independent of $u$. It follows from \eqref{boundk2} and our choice of $\Psi$ that
\begin{align}\label{k2case1}
	\int_{\R^3\times \R_+}&k^2_{ij}(v,I,v_*,I_*)\frac{w_{i\be}(v,I)}{w_{j\be}(v_*,I_*)}e^\frac{\de|m_iv-m_jv_*|^2}{64}(1+I_*)^{1/8}dv_*dI_*\notag\\
	\leq& C\int_{\R^3\times (\R_+)^3} \exp[-\frac{m_im_j}{16(\sqrt{m_i}+\sqrt{m_j})^2}|u|^2](1+\frac{1}{|u|^{\eta}}) \notag\\
	&\qquad\times e^{-\frac{I'}{4}-\frac{I'_*}{4}}(I')^{\de_i/2-1}(I)^{\de_i/4-1/2}(I_*)^{\de_j/4-1/2}(I'_*)^{\de_j/2-1}\Psi(I,I_*,I',I'_*)(1+I_*)^{1/8}dudI_*dI'dI'_*\notag\\
	\leq& C\int_{(\R_+)^3} e^{-\frac{I'}{2}-\frac{I'_*}{2}}(I')^{\de_i/2-1}(I)^{\de_i/4-1/2}(I_*)^{\de_j/4-1/2}(I'_*)^{\de_j/2-1}\Psi(I,I_*,I',I'_*)(1+I_*)^{1/8}dI_*dI'dI'_*.
\end{align}
The combination of the above two inequalities gives
\begin{align*}
	\int_{\R^3\times \R_+}&k^2_{ij}(v,I,v_*,I_*)\frac{w_{i\be}(v,I)}{w_{j\be}(v_*,I_*)}e^\frac{\de|m_iv-m_jv_*|^2}{64}(1+I_*)^{1/8}dv_*dI_*\notag\\
	\leq& C\int_{(\R_+)^3} e^{-\frac{I'}{2}-\frac{I'_*}{2}}(I')^{\de_i/2-1}(I'_*)^{\de_j/2-1}(I)^{-\de_i/4-\de_j/2-1/2}(I_*)^{\de_j/4-1/2}(1+I_*)^{1/8}\chi_{\{I_*\leq1\}}dI_*dI'dI'_*\notag\\
	&+C\int_{(\R_+)^3} e^{-\frac{I'}{2}-\frac{I'_*}{2}}(I')^{\de_i/2-1}(I'_*)^{\de_j/2-1}(I)^{-\de_i/4-\de_j/4+1/4}(I_*)^{-5/4}(1+I_*)^{1/8}\chi_{\{I_*\geq1\}}dI_*dI'dI'_*\notag\\
	\leq& \frac{C}{I^{1/2}}\leq \frac{C}{1+|v|+I^{1/4}}.
\end{align*}
\noindent{\it Case 2. } $I\geq \ka(|v|^2+1)$, $m_j<m_i$, $I\geq I_*$, or $I_*\geq \ka(|v|^2+1)$, $m_j<m_i$, $I\geq I_*$.  Observing that $\{I_*\geq \ka(|v|^2+1),I\geq I_*\}$ implies $I\geq \ka(|v|^2+1)$, we only need to prove the case $I\geq \ka(|v|^2+1)$, $m_j<m_i$, $I\geq I_*$.
It follows from \eqref{boundk2} and $\dis \frac{1}{|u|^\eta}\leq C_\eta+\frac{C_\eta}{|u|}$ that
\begin{align}\label{k2case2}
	k^2_{ij}&(v,I,v_*,I_*)\frac{w_{i\be}(v,I)}{w_{j\be}(v_*,I_*)}e^\frac{\de|m_iv-m_jv_*|^2}{64}(1+I_*)^{1/8}\notag\\
	\leq& C\int_{(\R_+)^2} \exp[-\frac{(\sqrt{m_i}-\sqrt{m_j})^2}{16}|u_{ij}|^2-\frac{m_im_j}{16(\sqrt{m_i}+\sqrt{m_j})^2}(|u|^2-2\frac{m_j-m_i}{m_im_j}(I'-I+I_*-I'_*))]\notag\\
	&\times (1+\frac{1}{|u|}) e^{-\frac{I'}{4}-\frac{I'_*}{4}}(I')^{\de_i/2-1}(I)^{\de_i/4-1/2}(I_*)^{\de_j/4-1/2}(I'_*)^{\de_j/2-1}\Psi(I,I_*,I',I'_*)(1+I_*)^{1/8}dI'dI'_*\notag\\
	\leq& C\int_{(\R_+)^2} \exp[-\frac{(\sqrt{m_i}-\sqrt{m_j})^2}{16}|u_{ij}|^2]\notag\\
	&\qquad\quad\times  e^{-\frac{I'}{4}-\frac{I'_*}{4}}(I')^{\de_i/2-1}(I)^{\de_i/4-1/2}(I_*)^{\de_j/4-1/2}(I'_*)^{\de_j/2-1}\Psi(I,I_*,I',I'_*)(1+I_*)^{1/8}dI'dI'_*\notag\\
	&+ C\int_{(\R_+)^2} \exp[-\frac{m_im_j}{16(\sqrt{m_i}+\sqrt{m_j})^2}(|u|^2-2\frac{m_j-m_i}{m_im_j}(I'-I+I_*-I'_*))]\frac{1}{|u|}\notag\\
	&\qquad\quad\times  e^{-\frac{I'}{4}-\frac{I'_*}{4}}(I')^{\de_i/2-1}(I)^{\de_i/4-1/2}(I_*)^{\de_j/4-1/2}(I'_*)^{\de_j/2-1}\Psi(I,I_*,I',I'_*)(1+I_*)^{1/8}dI'dI'_*\notag\\
	=&J_1+J_2.
\end{align}
Integrating over $(v_*,I_*)$ one gets that
\begin{align}\label{J1J2}
	\int_{\R^3\times \R_+}&k^2_{ij}(v,I,v_*,I_*)\frac{w_{i\be}(v,I)}{w_{j\be}(v_*,I_*)}e^\frac{\de|m_iv-m_jv_*|^2}{64}(1+I_*)^{1/8}dv_*dI_*\notag\\
	\leq&\int_{\R^3\times \R_+}J_1dv_*dI_*+\int_{\R^3\times \R_+}J_2dv_*dI_*.
\end{align}
For $J_1$, we choose $\Psi=\frac{1}{(I)^{\de_i/2+\de_j/2}}$ so that
\begin{align}\label{J1}
	\int_{\R^3\times \R_+}&J_1dv_*dI_*\notag\\
	\leq&C\int_{\R^3\times(\R_+)^3} \exp[-\frac{(\sqrt{m_i}-\sqrt{m_j})^2}{16}|u_{ij}|^2]e^{-\frac{I'}{4}-\frac{I'_*}{4}}\notag\\
	&\qquad\times  (I')^{\de_i/2-1}(I)^{\de_i/4-1/2}(I_*)^{\de_j/4-1/2}(I'_*)^{\de_j/2-1}\Psi(I,I_*,I',I'_*)(1+I_*)^{1/8}du_{ij}dI_*dI'dI'_* \notag\\
	\leq& C\int_{(\R_+)^2} e^{-\frac{I'}{4}-\frac{I'_*}{4}}(I')^{\de_i/2-1}(I)^{-\de_i/4-\de_j/2-1/2}[\int_0^I(I_*)^{\de_j/4-1/2}(1+I_*)^{1/8}dI_*](I'_*)^{\de_j/2-1}dI'dI'_*\notag\\
	\leq& CI^{-\de_i/4-\de_j/4+1/8} \leq \frac{C}{1+|v|+I^{1/4}}.
\end{align}
In the latter inequality we used the condition $I\geq \ka(|v|^2+1)\geq \ka$.  Regarding the $J_2$ term, we apply the change to spherical coordinates to obtain that
\begin{align}\label{J2case2}
	\int_{\R^3\times \R_+}&J_2dv_*dI_*\notag\\
	\leq&C\int_{(\R_+)^3}\int^\infty_{\sqrt{2\frac{m_j-m_i}{m_im_j}(I'-I+I_*-I'_*)}}\exp[-\frac{m_im_j}{16(\sqrt{m_i}+\sqrt{m_j})^2}(r^2-2\frac{m_j-m_i}{m_im_j}(I'-I+I_*-I'_*))]rdr\notag\\
	&\qquad\times  e^{-\frac{I'}{4}-\frac{I'_*}{4}}(I')^{\de_i/2-1}(I)^{\de_i/4-1/2}(I_*)^{\de_j/4-1/2}(I'_*)^{\de_j/2-1}\Psi(I,I_*,I',I'_*,r)(1+I_*)^{1/8}dI_*dI'dI'_*,
\end{align}
which, together with the same $\Psi$ as in $J_1$ and the fact that
\begin{align*}
&\int^\infty_{\sqrt{2\frac{m_j-m_i}{m_im_j}(I'-I+I_*-I'_*)}}\exp[-\frac{m_im_j}{16(\sqrt{m_i}+\sqrt{m_j})^2}(r^2-2\frac{m_j-m_i}{m_im_j}(I'-I+I_*-I'_*))]rdr\notag\\
&\leq\int^\infty_{0}\exp[-\frac{m_im_j}{16(\sqrt{m_i}+\sqrt{m_j})^2}(r^2-2\frac{m_j-m_i}{m_im_j}(I'-I+I_*-I'_*))]d(r^2-2\frac{m_j-m_i}{m_im_j}(I'-I+I_*-I'_*)) \leq C,
\end{align*}
yields that
\begin{align*}
	\int_{\R^3\times \R_+}&J_2dv_*dI_*\notag\\
	\leq&C\int_{(\R_+)^3}e^{-\frac{I'}{4}-\frac{I'_*}{4}}(I')^{\de_i/2-1}(I)^{\de_i/4-1/2}(I_*)^{\de_j/4-1/2}(I'_*)^{\de_j/2-1}\frac{(1+I_*)^{1/8}}{(I)^{\de_i/2+\de_j/2}}dI_*dI'dI'_*.
\end{align*}
Then, similar arguments as in $J_1$ show that
\begin{align}\label{J2}
	&\int_{\R^3\times \R_+}J_2dv_*dI_*\leq \frac{C}{1+|v|+I^{1/4}}.
\end{align}
It follows from \eqref{J1J2}, \eqref{J1} and \eqref{J2} that
\begin{align*}
	&\int_{\R^3\times \R_+}k^2_{ij}(v,I,v_*,I_*)\frac{w_{i\be}(v,I)}{w_{j\be}(v_*,I_*)}e^\frac{\de|m_iv-m_jv_*|^2}{64}(1+I_*)^{1/8}dv_*dI_*\leq \frac{C}{1+|v|+I^{1/4}}.
\end{align*}

\medskip
\noindent{\it Case 3. } $I\geq \ka(|v|^2+1)$, $m_j>m_i$, $I\leq I_*$.  In this case notice that \eqref{k2case2} still holds with the same $J_1$ and $J_2$.  We choose $\Psi=\frac{1}{(I_*)^{\de_i/2+\de_j/2}}$ for $J_1$ to get that
\begin{align*}
	\int_{\R^3\times \R_+}&J_1dv_*dI_*\notag\\
	\leq&C\int_{\R^3\times(\R_+)^3} \exp[-\frac{(\sqrt{m_i}-\sqrt{m_j})^2}{16}|u_{ij}|^2]e^{-\frac{I'}{4}-\frac{I'_*}{4}}\notag\\
	&\qquad\times  (I')^{\de_i/2-1}(I)^{\de_i/4-1/2}(I_*)^{\de_j/4-1/2}(I'_*)^{\de_j/2-1}\Psi(I,I_*,I',I'_*)(1+I_*)^{1/8}du_{ij}dI_*dI'dI'_* \notag\\
	\leq& C\int_{(\R_+)^2} e^{-\frac{I'}{4}-\frac{I'_*}{4}}(I')^{\de_i/2-1}(I)^{\de_i/4-1/2}[\int_I^\infty (I_*)^{-\de_i/2-\de_j/4-1/2}(1+I_*)^{1/8}dI_*](I'_*)^{\de_j/2-1}dI'dI'_*\notag\\
	\leq& CI^{-\de_i/4-\de_j/4+1/8} \leq \frac{C}{1+|v|+I^{1/4}}.
\end{align*} 
Moreover, similar calculation as in \eqref{J2case2} and the same $\Psi$ give
\begin{align*}
	\int_{\R^3\times \R_+}&J_2dv_*dI_*\notag\\
	\leq&C\int_{(\R_+)^3}e^{-\frac{I'}{4}-\frac{I'_*}{4}}(I')^{\de_i/2-1}(I)^{\de_i/4-1/2}(I_*)^{\de_j/4-1/2}(I'_*)^{\de_j/2-1}\frac{(1+I_*)^{1/8}}{(I_*)^{\de_i/2+\de_j/2}}dI_*dI'dI'_*\notag\\
	\leq& \frac{C}{1+|v|+I^{1/4}}.
\end{align*}
Hence, \eqref{k2pp} holds after using the previous two estimates.

\medskip
\noindent{\it Case 4. } $I_*\geq \ka(|v|^2+1)$, and either $m_j>m_i$, $I\geq I_*$ or $m_j<m_i$, $I\leq I_*$.  Notice that \eqref{k2case1} still holds. We let
\begin{align*}
	\Psi=&\frac{1}{(I)^{\de_i/4}(I_*)^{\de_i/4+\de_j/2}}\chi_{\{I>1\}}+\frac{1}{(I_*)^{\de_i/2+\de_j/2}}\chi_{\{I\leq1\}},
\end{align*}
to obtain that
\begin{align*}
	\int_{\R^3\times \R_+}&k^2_{ij}(v,I,v_*,I_*)\frac{w_{i\be}(v,I)}{w_{j\be}(v_*,I_*)}e^\frac{\de|m_iv-m_jv_*|^2}{64}(1+I_*)^{1/8}dv_*dI_*\notag\\
	& \leq  C\int_{\R_+} (I)^{-1/2}(I_*)^{-\de_i/4-\de_j/4-1/2}(1+I_*)^{1/8}\chi_{\{I\geq1\}}dI_*\notag\\
	&\quad + C\int_{\R_+}(I)^{\de_i/4-1/2}(I_*)^{-\de_i/2-\de_j/4-1/2}(1+I_*)^{1/8}\chi_{\{I\leq1\}}dI_*\notag\\
	&\leq \frac{C}{1+|v|+I^{1/4}}.
\end{align*}

\medskip
\noindent{\it Case 5. } $I_*\geq \ka(|v|^2+1)$, $m_j>m_i$, $I\leq I_*$.  We use \eqref{k2case2} again. For $J_1$ set
\begin{align*}
	\Psi=&\frac{1}{(I_*)^{\de_i/2+\de_j/2}}\chi_{\{I\leq1\}}+\frac{1}{(I)^{\de_i/4}(I_*)^{\de_i/4+\de_j/2}}\chi_{\{I>1\}}
\end{align*}
to get that
\begin{align}\label{boundJ1}
	\int_{\R^3\times \R_+}&J_1dv_*dI_*\notag\\
	\leq& C(I)^{\de_i/4-1/2}\chi_{\{I\leq1\}}[\int_\ka^\infty (I_*)^{-\de_i/4-\de_j/4-1/2}(1+I_*)^{1/8}dI_*](1+|v|^2)^{-\de_i/4}\notag\\
	&+C(I)^{-1/2}\chi_{\{I\geq1\}}[\int_\ka^\infty (I_*)^{-\de_i/8-\de_j/4-1/2}(1+I_*)^{1/8}dI_*](1+|v|^2)^{-\de_i/8}\notag\\
	\leq& \frac{C}{1+|v|^{1/2}+I^{1/4}}.
\end{align}
Similarly, using the same $\Psi$ and the argument in \eqref{J2case2}, it holds that
\begin{align*}
	&\int_{\R^3\times \R_+}J_2dv_*dI_*
	\leq\frac{C}{1+|v|^{1/2}+I^{1/4}}.
\end{align*}
Then one gets \eqref{k2pp} by the previous two estimates.

\smallskip
\noindent{\it Case 6. } $I\leq \ka(|v|^2+1)$ and $I_*\leq \ka(|v|^2+1)$.  In this last case, we determine the small number $\ka>0$ introduce in {\it Case 1}. A direct calculation shows that
\begin{align}\label{reuij}
	&|u_{ij}|^2=\big|\frac{m_i}{m_i-m_j}v-\frac{m_j}{m_i-m_j}v_*\big|^2\notag\\
	=&\frac{(m_i+m_j)^2}{4(m_i-m_j)^2}|(v+v_*)^\perp|^2+\frac{(m_i+m_j)^2}{4(m_i-m_j)^2}\frac{\big||v|^2-|v_*|^2\big|^2}{|v-v_*|^2}+\frac{m_i+m_j}{2(m_i-m_j)}(|v|^2-|v_*|^2)+\frac{1}{4}|v-v_*|^2,
\end{align}
where $(v+v_*)^\perp:=v+v_*-(v+v_*)\cdot\frac{v-v_*}{|v-v_*|}\, \frac{v-v_*}{|v-v_*|}.$ 
We combine the above identity and \eqref{boundk2} to obtain
\begin{align}\label{boundk2c6}
	k^2_{ij}&(v,I,v_*,I_*)\frac{w_{i\be}(v,I)}{w_{j\be}(v_*,I_*)}e^\frac{\de|m_iv-m_jv_*|^2}{64}(1+I_*)^{1/8}\notag\\
	\leq& C\int_{(\R_+)^2} \exp[-\frac{(m_i+m_j)^2}{64(m_i-m_j)^2}\frac{\big||v|^2-|v_*|^2\big|^2}{|v-v_*|^2}-\frac{m_i+m_j}{32(m_i-m_j)}(|v|^2-|v_*|^2)-\frac{1}{64}|v-v_*|^2\notag\\
	&\qquad\qquad\qquad-\frac{m_im_j}{16(\sqrt{m_i}+\sqrt{m_j})^2}(|u|^2-2\frac{m_j-m_i}{m_im_j}(I'-I+I_*-I'_*))]\notag\\
	&\times (1+\frac{1}{|u|^{\eta}}) e^{-\frac{I'}{4}-\frac{I'_*}{4}}(I')^{\de_i/2-1}(I)^{\de_i/4-1/2}(I_*)^{\de_j/4-1/2}(I'_*)^{\de_j/2-1}\Psi(I,I_*,I',I'_*)(1+I_*)^{1/8}dI'dI'_*.
\end{align}
To simplify the notations introduce
\begin{align}\label{defAB}
A_1=\frac{m_i+m_j}{8(m_i-m_j)},\qquad A_2=\frac{1}{8},\qquad B=\frac{m_im_j}{16(\sqrt{m_i}+\sqrt{m_j})^2}.
\end{align}
It is straightforward to get that 
\begin{align*}
	\frac{m_im_j}{16(\sqrt{m_i}+\sqrt{m_j})^2}\times2\frac{m_j-m_i}{m_im_j}(I'-I'_*)-\frac{1}{8}(I'+I'_*)=-\frac{\sqrt{m_i}}{4(\sqrt{m_i}+\sqrt{m_j})}I'-\frac{\sqrt{m_j}}{4(\sqrt{m_i}+\sqrt{m_j})}I'_*,
 \end{align*}
which, together with \eqref{boundk2c6} and our assumption $\max\{I,I_*\}\leq \ka(|v|^2+1)$, yields
\begin{align}\label{boundk2c61}
	k^2_{ij}&(v,I,v_*,I_*)\frac{w_{i\be}(v,I)}{w_{j\be}(v_*,I_*)}e^\frac{\de|m_iv-m_jv_*|^2}{64}(1+I_*)^{1/8}\notag\\
	\leq& C\int_{(\R_+)^2} \exp[-A_1^2\frac{\big||v|^2-|v_*|^2\big|^2}{|v-v_*|^2}-2A_1A_2(|v|^2-|v_*|^2)-(A_2^2+B)|v-v_*|^2+\frac{\ka}{4}|v|^2]\notag\\
	&\times (1+\frac{1}{|u|^{\eta}}) e^{-\frac{I'}{8}-\frac{I'_*}{8}}(I')^{\de_i/2-1}(I)^{\de_i/4-1/2}(I_*)^{\de_j/4-1/2}(I'_*)^{\de_j/2-1}\Psi(I,I_*,I',I'_*)(1+I_*)^{1/8}dI'dI'_*.
\end{align}
Note that 
\begin{align}\label{rev}
|v|^2=\frac{1}{4}|(v+v_*)^\perp|^2+\frac{1}{4}\frac{\big||v|^2-|v_*|^2\big|^2}{|v-v_*|^2}+\frac{1}{2}(|v|^2-|v_*|^2)+\frac{1}{4}|v-v_*|^2.	
\end{align}
Then, we have that
\begin{align*}
	k^2_{ij}&(v,I,v_*,I_*)\frac{w_{i\be}(v,I)}{w_{j\be}(v_*,I_*)}e^\frac{\de|m_iv-m_jv_*|^2}{64}(1+I_*)^{1/8}\notag\\
	\leq& C\int_{(\R_+)^2} \exp[-(A_1^2-\ka)\frac{\big||v|^2-|v_*|^2\big|^2}{|v-v_*|^2}-2(A_1A_2-\ka)(|v|^2-|v_*|^2)-(A_2^2+B-\ka)|v-v_*|^2]\notag\\
	&\times (1+\frac{1}{|u|^{\eta}}) e^{-\frac{I'}{8}-\frac{I'_*}{8}}(I')^{\de_i/2-1}(I)^{\de_i/4-1/2}(I_*)^{\de_j/4-1/2}(I'_*)^{\de_j/2-1}\Psi(I,I_*,I',I'_*)(1+I_*)^{1/8}dI'dI'_*.
\end{align*}
Letting $\dis M=1+\frac{B}{2A^2_2}$ and choosing $\ka>0$ such that
\begin{align*}
&(A_1^2-\ka)\frac{\big||v|^2-|v_*|^2\big|^2}{|v-v_*|^2}+2(A_1A_2-\ka)(|v|^2-|v_*|^2)+(A_2^2+B-\ka)|v-v_*|^2\notag\\
\geq& (A_1^2-\frac{A_1^2}{M}-2\ka)\frac{\big||v|^2-|v_*|^2\big|^2}{|v-v_*|^2}+(A_2^2-MA^2_2+B-2\ka)|v-v_*|^2\notag\\
=& ((1-(1+32B)^{-1})A^2_1-2\ka)\frac{\big||v|^2-|v_*|^2\big|^2}{|v-v_*|^2}+(\frac{B}{2}-\ka)|v-v_*|^2\notag\\
\geq& \frac{1}{2}(1-(1+32B)^{-1})A^2_1\frac{\big||v|^2-|v_*|^2\big|^2}{|v-v_*|^2}+\frac{B}{4}|v-v_*|^2,
\end{align*}
it follows that there exists a constant 
$$c=\frac{1}{2}\min\{(1-(1+32B)^{-1})A^2_1,\frac{B}{2}\}>0,$$
 such that 
\begin{multline}\label{positivity}
	(A_1^2-\ka)\frac{\big||v|^2-|v_*|^2\big|^2}{|v-v_*|^2}+2(A_1A_2-\ka)(|v|^2-|v_*|^2)+(A_2^2+B-\ka)|v-v_*|^2\\
	>c\frac{\big||v|^2-|v_*|^2\big|^2}{|v-v_*|^2}+c|v-v_*|^2,
\end{multline}
which gives that
\begin{align}\label{boundk2pp1}
	k^2_{ij}&(v,I,v_*,I_*)\frac{w_{i\be}(v,I)}{w_{j\be}(v_*,I_*)}e^\frac{\de|m_iv-m_jv_*|^2}{64}(1+I_*)^{1/8}\notag
	\leq C\int_{(\R_+)^2} \exp[-c\frac{\big||v|^2-|v_*|^2\big|^2}{|v-v_*|^2}-c|v-v_*|^2]\notag\\
	&\times (1+\frac{1}{|u|^{\eta}}) e^{-\frac{I'}{8}-\frac{I'_*}{8}}(I')^{\de_i/2-1}(I)^{\de_i/4-1/2}(I_*)^{\de_j/4-1/2}(I'_*)^{\de_j/2-1}\Psi(I,I_*,I',I'_*)(1+I_*)^{1/8}dI'dI'_*.
\end{align} 
Integrate in $(v_*, I_*)$ and use the following estimate (see Lemma 7 in \cite{Guo}) 
\begin{align}\label{propertyv}
\int_{\R^3} &\exp[-c\frac{\big||v|^2-|v_*|^2\big|^2}{|v-v_*|^2}-c|v-v_*|^2](1+\frac{1}{|u|^{\eta}})dv_*\notag \leq \int_{\R^3} \exp[-c\frac{|v\cdot u|^2}{|u|^2}-c|u|^2](1+\frac{1}{|u|^{\eta}})dv_*\notag\\
	\leq &\int_{\R^3} \exp[-c'|u|^2-c'|v\cdot u|](1+\frac{1}{|u|^{\eta}})du
	\notag\\
	\leq &C\min\Big\{1,\int_{\R^2}(1+\frac{1}{|u_\perp|^{\eta}})\exp[-c'|u_
	\perp|^2]du_\perp\int_{\R} \exp[-c'|v||u_\para|]du_\para\Big\}\quad(u_\para=u\cdot \frac{v}{|v|}\, \frac{v}{|v|},\ u_\perp=u-u_\para)\notag\\
	\leq &\frac{C}{1+|v|},
\end{align}
to obtain that
\begin{align*}
	\int_{\R^3\times \R_+}&k^2_{ij}(v,I,v_*,I_*)\frac{w_{i\be}(v,I)}{w_{j\be}(v_*,I_*)}e^\frac{\de|m_iv-m_jv_*|^2}{64}(1+I_*)^{1/8}dv_*dI_*\notag\\
	\leq& \frac{C}{1+|v|}\int_{(\R_+)^3} e^{-\frac{I'}{8}-\frac{I'_*}{8}}(I')^{\de_i/2-1}(I)^{\de_i/4-1/2}(I_*)^{\de_j/4-1/2}(I'_*)^{\de_j/2-1}\Psi(I,I_*,I',I'_*)(1+I_*)^{1/8}dI_*dI'dI'_*.
\end{align*}
Then, we set
\begin{align*}
\Psi(I,I_*,I',I'_*)=& \frac{1}{(I_*)^{1/2}(I')^{\de_j/2+1/4}(I'_*)^{\de_j/2+1/4}}\chi_{\{I\leq 1,I_*\leq 1\}}+\frac{1}{(I_*)^{(\de_i+\de_j)/2}}\chi_{\{I\leq 1,I_*> 1\}}\notag\\
&\qquad+\frac{1}{(I)^{(\de_i+\de_j)/2}}\chi_{\{I> 1,I_*\leq 1\}}+\frac{1}{(I)^{\de_i/2-1/4}(I_*)^{\de_j/2+1/4}}\chi_{\{I> 1,I_*> 1\}}
\end{align*}
to get that
\begin{align*}
	&\int_{\R^3\times \R_+}k^2_{ij}(v,I,v_*,I_*)\frac{w_{i\be}(v,I)}{w_{j\be}(v_*,I_*)}e^\frac{\de|m_iv-m_jv_*|^2}{64}(1+I_*)^{1/8}dv_*dI_*
	\leq \frac{C}{1+|v|+I^{1/2}}.
\end{align*}
Consequenlty, \eqref{k2pp} is proved by collecting the above six cases for $m_i\neq m_j$.  Now, for the situation $m_i=m_j$ we use the structure given in \eqref{k3} to obtain that
\begin{align*}
	k^2_{ij}&(v,I,v_*,I_*)\notag\\
	=&C\big(\frac{I}{I'}\big)^{\de_i/4-1/2}\int_{(\R^3)^{\perp n}\times (\R_+)^2} (M'_{i}M'_{j*})^{1/2}\big(\frac{I'_*}{I_*}\big)^{\de_j/4-1/2}\si_{ij}{(|\hat{u}|,\frac{\hat{u}\cdot \bar{u}}{|\hat{u}||\bar{u}|},I,I'_*,I',I_*)}\frac{|\hat{u}|}{|\bar{u}||u|} d\om' dI^\prime dI^\prime_*,
\end{align*} 
with
\begin{align}\label{k2velocity}
	v'=v+\om'-\frac{\De I}{m_i|u|},\qquad\text{and}\qquad v'_*=v_*+\om'-\frac{\De I}{m_i|u|}.
\end{align}
This yields that
\begin{align}\label{k2mimj}
	k^2_{ij}&(v,I,v_*,I_*)\frac{w_{i\be}(v,I)}{w_{j\be}(v_*,I_*)}e^\frac{\de|v-v_*|^2}{64}(1+I_*)^{\frac{1}{8}}\notag\\ \leq&C(I)^{\de_i/4-1/2}(I_*)^{\de_j/4-1/2}\frac{1}{|u|} \int_{(\R^3)^{\perp n}\times (\R_+)^2}\exp[-\frac{m_j}{4}\big( |V_\perp+\om'|^2+\frac{(|u|-2|v|\cos\tilde{\theta}+2\frac{\De I}{m_i|u|})^2}{4}+\frac{|u|^2}{4} \big)]\notag\\
	&\qquad\qquad\qquad\qquad\qquad\times e^{-\frac{I'}{4}-\frac{I'_*}{4}}(I')^{\de_i/2-1}(I^\prime_*)^{\de_j/2-1}(1+I_*)^{1/8}\frac{1}{\widehat{E}_{ij}^{(\eta+\de_i+\de_j)/2}} d\om' dI^\prime dI^\prime_*,
\end{align}
by similar arguments leading to \eqref{3boundk3}. Then we use the fact
\begin{align*}
	\frac{1}{\widehat{E}_{ij}^{(\de_i+\de_j)/2}}\leq& \frac{1}{(I)^{\de_i/4-1/2}(I_*)^{\de_j/4}(I')^{\de_i/4+1/4}(I'_*)^{\de_j/4+1/4}}\chi_{\{I\leq 1,I_*\leq 1\}}+\frac{1}{(I_*)^{(\de_i+\de_j)/2}}\chi_{\{I\leq 1,I_*\geq 1\}}\notag\\
	&\qquad+\frac{1}{(I)^{(\de_i+\de_j)/2}}\chi_{\{I\geq 1,I_*\leq 1\}}+\frac{1}{(I)^{(\de_i+\de_j-1)/4}(I_*)^{(\de_i+\de_j+1)/4}}\chi_{\{I\geq 1,I_*\geq 1\}},
\end{align*}
and perform similar calculations as in \eqref{boundI}, \eqref{4boundk3}, \eqref{5boundk3}, \eqref{6boundk3} and \eqref{kpp3}, together with
$$
\de|m_iv-m_jv_*|^2\leq \de \max_{1\leq i\leq n}{m_i^2}|v-v_*|^2,\ m_i=m_j,
$$
to obtain \eqref{k2pp}.  In this way the proof of Lemma \ref{leK} is completed.  
\end{proof}

In case of Mono-Poly collision, the internal energy variables reduce to only $I_*$ and $I'_*$. However, such reduction implies non-integrability. For example, on the estimates for $k^2_{ij}$ in the proof of Lemma \ref{leK}, if we let $I$, $I'$ vanish, the polynomial part for $I_*$ and $I'_*$ in \eqref{boundk2} becomes
$$
(I_*)^{\de_j/4-1/2}(I'_*)^{\de_j/2-1}\Psi(I_*,I'_*)(1+I_*)^{1/8}=(I_*)^{\de_j/4-1/2}(I'_*)^{\de_j/2-1}\frac{(1+I_*)^{1/8}}{(I_*)^{\psi_2}(I'_*)^{\de_j/2-\psi_2}},\quad 0\leq\psi_2\leq\frac{\de_j}{2}.
$$
Observe that for the whole range of $\psi_2$, the integrand on $I_*$ is not integrable for large $I_*$. Thus, in this case, we need a variation of the approach to handle this problem.
\begin{lemma}[Mono-Poly Collision]\label{leKmp}
	For $i\leq p$ and $j> p$, the linearized operator defined in \eqref{DefKij} can be written as
	\begin{align*}
			&\dis 	K_{ij}f=	K_{ij}f(v)=\int_{\R^3\times \R_+}(k^2_{ij}(v,v_*,I_*)-k^1_{ij}(v,v_*,I_*))f_j(v_*,I_*)dv_*dI_*+\int_{\R^3}k^3_{ij}(v,v_*)f_i(v_*)dv_*,
	\end{align*}
	with
	\begin{align}
		&\int_{\R^3\times\R_+} k^1_{ij}(v,v_*,I_*)\frac{w_{i\be}(v)}{w_{j\be}(v_*,I_*)}e^\frac{\de|v-v_*|^2}{64}(1+I_*)^{\frac{1}{8}} dv_*dI_*\leq \frac{C}{1+|v|^{\frac{1}{2}}},\label{kmp}\\
		&\int_{\R^3\times\R_+} k^2_{ij}(v,v_*,I_*)\frac{w_{i\be}(v)}{w_{j\be}(v_*,I_*)}e^\frac{\de|m_iv-m_jv_*|^2}{64}(1+I_*)^{\frac{1}{8}} dv_*dI_*\leq \frac{C}{1+|v|^{\frac{1}{2}}},\label{k2mp}\\
		&\int_{\R^3} k^3_{ij}(v,v_*)\frac{w_{i\be}(v)}{w_{i\be}(v_*)}e^\frac{\de|v-v_*|^2}{64} dv_*\leq \frac{C}{1+|v|^{\frac{1}{2}}},\label{k3mp}
	\end{align}
	where $\dis \de=\min_{1\leq i,j\leq n}\{m_i,\frac{(\sqrt{m_i}-\sqrt{m_j})^2}{16(m_i-m_j)^2}\}.$
\end{lemma}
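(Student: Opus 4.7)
The plan is to mirror the six-case analysis of Lemma~\ref{leK}, taking account of the structural simplifications (no $I, I'$ integration, and no pre/post-collision internal energy for species $i$) and one genuine difficulty (reduced internal-energy degrees of freedom relative to the Poly-Poly case).

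For the representations of $k^1_{ij}$, $k^2_{ij}$ and $k^3_{ij}$ I would repeat the derivations of \eqref{rek1ij}, \eqref{Gk3} and \eqref{rek2ij}, where now $Z=v$, $Z'=v'$, $\CZ_i=\R^3$, and the collision kernel $W_{ij}$ from \eqref{DefW} is missing the $(I)^{\de_i/2-1}$ factor and the $(I-I')\chi_{\{i>p\}}$ term inside $\bm{\de}_1$. For $k^1_{ij}$, the analog of \eqref{Gk1}--\eqref{kpp1} immediately yields a bound of the form $Ce^{-c(|v|^2+|v_*|^2+I_*)}(I_*)^{\de_j/4-1}$, which, integrated against $\frac{w_{i\be}(v)}{w_{j\be}(v_*,I_*)}e^{\de|v-v_*|^2/64}(1+I_*)^{1/8}$, gives \eqref{kmp}. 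For $k^3_{ij}$, after the swap $Z_*\leftrightarrow Z'$, only $v'$ and the $j$-species internal energies $I_*, I'_*$ remain as integration variables; the parallel/perpendicular splitting used in \eqref{k3}--\eqref{kpp3} then carries over verbatim and produces \eqref{k3mp}, the internal-energy decay simply being absent from the final bound.

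The real obstacle is $k^2_{ij}$. After exchanging $Z_*\leftrightarrow Z'_*$ and invoking \eqref{Wk2}, the polynomial weight in the integrand is $(I_*)^{\de_j/4-1/2}(I'_*)^{\de_j/2-1}\Psi$, where $\Psi\leq C/\widehat{E}_{ij}^{\de_j/2}$ can be distributed between the two internal energies; however, as noted in the remarks preceding the statement, no single choice $\Psi(I_*, I'_*)=(I_*)^{-\psi_2}(I'_*)^{-(\de_j/2-\psi_2)}$ yields $I_*$-integrability near both $0$ and $\infty$ at the same time. The remedy is to retain the full $\eta/2$ singularity from \eqref{si}: rather than discarding $1/\widehat{E}_{ij}^{\eta/2}$ as a harmless $1/|u|^{\eta/2}$ factor as in the Poly-Poly case, I would keep $1/\widehat{E}_{ij}^{(\eta+\de_j)/2}$ intact and use $\widehat{E}_{ij}\geq I_*$ in the regime of large $I_*$ to produce a weight $(I_*)^{-\de_j/4-(\eta+1)/2}$ that is integrable at infinity for every $\de_j\geq 2$. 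With this in hand, the six-case split of Lemma~\ref{leK} carries over: for $I_*\leq \ka(|v|^2+1)$, the positivity identity \eqref{positivity} combined with \eqref{boundk2pp1} and the integral bound \eqref{propertyv} delivers $C/(1+|v|)$ decay; for $I_*\geq \ka(|v|^2+1)$, the singular cross section converts the large $I_*$ into $|v|$-decay via $\widehat{E}_{ij}\geq \ka(|v|^2+1)$. The equal-mass case $m_i=m_j$ is handled separately by the representation parallel to \eqref{k2mimj}.

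The main technical difficulty I anticipate is the careful bookkeeping in the large-$I_*$ regime, where the available powers of $\widehat{E}_{ij}$ must be split between securing $I_*$-integrability near infinity and extracting $|v|$-decay. The degradation from $O(|v|^{-1})$ in the classical monatomic theory to $O(|v|^{-1/2})$ in \eqref{k2mp} reflects precisely this trade-off, consistent with the authors' strategic comments; I expect the margin to be thinnest when $\de_j=2$ and $\eta$ is small, so I would verify that boundary case most carefully.
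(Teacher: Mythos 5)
Your treatments of $k^1_{ij}$ and $k^3_{ij}$ coincide with the paper's. The genuine gap is in $k^2_{ij}$ for $m_i\neq m_j$: the proposed remedy of keeping $\widehat{E}_{ij}^{-(\eta+\de_j)/2}$ intact and using $\widehat{E}_{ij}\geq I_*$ in the large-$I_*$ regime does not close the argument. A power count shows why: the integrand carries $(I_*)^{\de_j/4-1/2}(1+I_*)^{1/8}$, so even if you spend the \emph{entire} budget $(\eta+\de_j)/2$ on $I_*$-decay the resulting exponent is $\de_j/4-1/2+1/8-(\eta+\de_j)/2=-\de_j/4-3/8-\eta/2$, which for $\de_j=2$ equals $-7/8-\eta/2$ and is not integrable at $I_*=\infty$ whenever $\eta\leq 1/4$ (and $\eta$ ranges over $[0,1)$); moreover, spending everything on $I_*$ leaves no power of $\widehat{E}_{ij}$ for the $|v|$-decay you claim to extract from $\widehat{E}_{ij}\geq\ka(|v|^2+1)$. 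So the boundary case you flagged ($\de_j=2$, small $\eta$) is not merely thin — the approach fails there outright.

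The paper resolves the large-$I_*$ regime by a different mechanism, which is the idea your proposal is missing. When $m_i>m_j$, the exponential factor $\exp[-\frac{m_im_j}{16(\sqrt{m_i}+\sqrt{m_j})^2}(|u|^2-2\frac{m_j-m_i}{m_im_j}(I_*-I'_*))]$ itself decays exponentially in $I_*$, so there is no problem. When $m_i<m_j$ and $I_*\geq\max\{1,\ka(|v|^2+1)\}$ (the paper's Case 4), one takes $\Psi_{mp}=(I_*)^{-\de_j/2}$, converts the whole polynomial factor $(I_*)^{-\de_j/4-1/2}(1+I_*)^{1/8}$ \emph{pointwise} into $C(1+|v|)^{-1}$ via $I_*\geq\ka(1+|v|^2)$ and $\de_j\geq2$, and then carries out the $I_*$-integration on the exponential alone, using the kinematic constraint $|u|^2-2\frac{m_j-m_i}{m_im_j}(I_*-I'_*)=|u'|^2\geq 0$: this restricts $I_*\leq\frac{|u|^2}{A_4}+I'_*$ (with $A_4=2\frac{m_j-m_i}{m_im_j}$) and gives $\int \exp[-A_3(|u|^2-A_4(I_*-I'_*))]\,dI_*\leq C$ uniformly. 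It is this constraint-based integration — not the $\widehat{E}_{ij}^{-\eta/2}$ singularity of the cross section — that supplies $I_*$-integrability without any polynomial decay at infinity. You would also need the paper's device in the intermediate regime $1\leq I_*\leq\ka(|v|^2+1)$ (Case 3), where an artificial factor $e^{-I_*/8}$ is produced by absorbing $e^{+I_*/8}$ into $e^{\ka|v|^2/4}$ before invoking the positivity estimate \eqref{positivity} and \eqref{propertyv}; your sketch leaves that moderate-$I_*$ integral, whose upper limit grows like $|v|^2$, unaccounted for as well.
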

\begin{proof}
	We prove \eqref{kmp} first. Noticing that \eqref{changeofvariables} and \eqref{changeM} still hold, then similar arguments as in \eqref{Gk1} and \eqref{k1pp} give
	\begin{align}\label{k1mp}
		k^1_{ij}(v,v_*,I_*)&=\int_{\S^2\times \R_+} \frac{ (M'_iM'_{j*})^{1/2}}{(I'_*)^{\de_j/4-1/2}}(I_*)^{\de_j/4-1/2}\si_{ij}|u|d\om   dI^\prime_*\notag\\
		&\leq Ce^{-\frac{m_i|v|^2}{8}-\frac{m_j|v_*|^2}{8}-\frac{I_*}{4}}(I_*)^{\de_j/4-1/2}E_{ij}^{(1-\eta)/2}\int_{ \R_+} e^{-\frac{m_i|v'|^2}{8}-\frac{m_j|v'_*|^2}{8}-\frac{I'_*}{4}}\frac{(I'_*)^{\de_j/2-1}}{(I_*)^{1/2}( I^\prime_*)^{\de_j/2-1/2}} dI^\prime_*\notag\\
		&\leq Ce^{-\frac{m_i|v|^2}{16}-\frac{m_j|v_*|^2}{16}-\frac{I_*}{8}}(I_*)^{\de_j/4-1},
	\end{align}
	which yields \eqref{k1mp} by taking integral in $v_*$ and $I_*$.  Let us consider now the term $k^3_{ij}$ recalling that $\widetilde{E}_{ij}=\frac{\mu_{ij}}{2}|\tilde{u}|^2+I$ for Mono-Poly collisions.  Using the notations in \eqref{notationk3}, similar calculations made in \eqref{cv}, \eqref{k3} and \eqref{2boundk3} show that
	\begin{align*}
		k^3_{ij}(v,v_*)\leq&C\frac{1}{|u|}\int_{(\R^3)^{\perp n}\times (\R_+)^2} e^{-\frac{m_j|v'|^2}{4}-\frac{m_j|v'_*|^2}{4}-\frac{I'}{2}-\frac{I'_*}{2}}(I'I^\prime_*)^{\de_j/2-1}\frac{1}{\widetilde{E}_{ij}^{(\eta+\de_j)/2}} d\om_* dI^\prime dI^\prime_*,
	\end{align*}
	which, together with \eqref{controle}, \eqref{computev}, the fact that $\frac{w_{i\be}(v)}{w_{i\be}(v_*)}\leq C e^\frac{\de m_i|u|^2}{64m_j},$ and the arguments of \eqref{3boundk3} and \eqref{1boundk3}, yields
	\begin{align*}
		k^3_{ij}&(v,v_*)\frac{w_{i\be}(v)}{w_{i\be}(v_*)}e^\frac{\de|v-v_*|^2}{64}\notag\\
		 \leq&\frac{C}{|u|} \int_{ (\R_+)^2}\exp[-\frac{m_j}{4}\big(\frac{(|u|-2|v|\cos\tilde{\theta}+2\frac{\De_* I}{m_i|u|})^2}{4}+\frac{m_i^2|u|^2}{4m_j^2} \big)]e^{-\frac{I'}{4}-\frac{I'_*}{4}}\frac{(I'I^\prime_*)^{\de_j/2-1}}{\widetilde{E}_{ij}^{\de_j/2}} dI^\prime dI^\prime_*.
	\end{align*}
	Integrate in $v_*$ and use that 
$$\frac{(I'I^\prime_*)^{\de_j/2-1}}{\widetilde{E}_{ij}^{\de_j/2}}\leq C\frac{(I'I^\prime_*)^{\de_j/2-1}}{(I'I^\prime_*)^{\de_j/4}}\leq C(I'I^\prime_*)^{\de_j/4-1}
$$
	to get that
	\begin{align*}
		&\int_{\R^3}k^3_{ij}(v,v_*)\frac{w_{i\be}(v)}{w_{i\be}(v_*)}e^\frac{\de|v-v_*|^2}{64}dv_* \leq\frac{C}{1+|v|} \int_{ (\R_+)^2}e^{-\frac{I'}{4}-\frac{I'_*}{4}}(I'I^\prime_*)^{\de_j/4-1} dI^\prime dI^\prime_*\leq\frac{C}{1+|v|},
	\end{align*}
which gives \eqref{k3mp}.  In the first inequality above, we applied
	\begin{align}\label{kmp3}
		\int_{ \R^3}\frac{C}{|u|^\ga}\exp[-\frac{m_j}{4}\big(\frac{(|u|-2|v|\cos\tilde{\theta}+2\frac{\De_* I}{m_i|u|})^2}{4}+\frac{m_i^2|u|^2}{4m_j^2} \big)]dv_* \leq\frac{C}{1+|v|}, 
	\end{align}
	for any $\ga>-3$, which can be obtained by the calculations in \eqref{4boundk3}, \eqref{5boundk3} and \eqref{6boundk3}.  Next, considering the term $k^2_{ij}$, if $m_i\neq m_j$ we can use \eqref{Wk2} and similar arguments leading to \eqref{boundk2} to find that
	\begin{align}\label{boundk2pp}
		k^2_{ij}&(v,v_*,I_*)\frac{w_{i\be}(v)}{w_{j\be}(v_*,I_*)}e^\frac{\de|m_iv-m_jv_*|^2}{64}(1+I_*)^{1/8}\notag\\
		&\leq C\int_{\R_+} \exp[-\frac{(\sqrt{m_i}-\sqrt{m_j})^2}{16}|u_{ij}|^2-\frac{m_im_j}{16(\sqrt{m_i}+\sqrt{m_j})^2}(|u|^2-2\frac{m_j-m_i}{m_im_j}(I_*-I'_*))]\notag\\
		&\qquad\times (1+\frac{1}{|u|^{\eta}}) e^{-\frac{I'_*}{4}}(I_*)^{\de_j/4-1/2}(I'_*)^{\de_j/2-1}\Psi_{mp}(I_*,I'_*)(1+I_*)^{1/8}dI'_*,
	\end{align}
	where $\Psi_{mp}(I_*,I'_*):=(I_*)^{-\psi_{1*}}(I'_*)^{-\psi_{2*}}$ with $\psi_{i*}>0$ arbitrarily chosen such that $\psi_{1*}+\psi_{2*}=\de_j/2$.  We consider \eqref{boundk2pp} in four cases.
	
	\smallskip
	\noindent{\it Case 1. } $m_i>m_j$.  In this case, $k^2_{ij}$ enjoys exponential decay in both $I_*$ and $I'_*$ such that
	\begin{align*}
		k^2_{ij}&(v,v_*,I_*)\frac{w_{i\be}(v)}{w_{j\be}(v_*,I_*)}e^\frac{\de|m_iv-m_jv_*|^2}{64}(1+I_*)^{1/8}\notag\\
		&\leq C\int_{\R_+} \exp[-\frac{(\sqrt{m_i}-\sqrt{m_j})^2}{16}|u_{ij}|^2-\frac{m_im_j}{16(\sqrt{m_i}+\sqrt{m_j})^2}(|u|^2-2\frac{m_j-m_i}{m_im_j}I_*)]\notag\\
		&\qquad\times (1+\frac{1}{|u|^{\eta}}) e^{-\frac{I'_*}{8}}(I'_*)^{\de_j/2-1}\Psi_{mp}(I_*,I'_*)dI'_*.
	\end{align*}
	Then, by choosing
	$$
	\Psi_{mp}(I_*,I'_*)=\frac{1}{(I_*)^{1/2}(I'_*)^{\de_j/2-1/2}},
	$$
	and using \eqref{reuij}, \eqref{rev}, \eqref{positivity} with $\ka=0$, and similar calculations given in \eqref{boundk2pp1}, it holds that
	\begin{align*}
		&\int_{\R^3\times \R_+}k^2_{ij}(v,v_*,I_*)\frac{w_{i\be}(v)}{w_{j\be}(v_*,I_*)}e^\frac{\de|m_iv-m_jv_*|^2}{64}(1+I_*)^{1/8}dv_*dI_*
		\leq \frac{C}{1+|v|}.
	\end{align*}
	
	\medskip
	\noindent{\it Case 2. } $m_i<m_j$ and $I_*\leq 1$.  Under this condition, the term $\exp[\frac{m_im_j}{16(\sqrt{m_i}+\sqrt{m_j})^2}2\frac{m_j-m_i}{m_im_j}I_*]$ is bounded. Therefore,
	\begin{align*}
		k^2_{ij}&(v,v_*,I_*)\frac{w_{i\be}(v)}{w_{j\be}(v_*,I_*)}e^\frac{\de|m_iv-m_jv_*|^2}{64}(1+I_*)^{1/8}\notag\\
		& \leq C\int_{\R_+} \exp[-\frac{(\sqrt{m_i}-\sqrt{m_j})^2}{16}|u_{ij}|^2-\frac{m_im_j}{16(\sqrt{m_i}+\sqrt{m_j})^2}|u|^2]\notag\\
		&\qquad \times (1+\frac{1}{|u|^{\eta}}) e^{-\frac{I'_*}{8}}(I_*)^{\de_j/4-1/2}(I'_*)^{\de_j/2-1}\Psi_{mp}(I_*,I'_*)dI'_*.
	\end{align*}
	We use \eqref{positivity} and \eqref{propertyv}, and set
	$$
	\Psi_{mp}(I_*,I'_*)=\frac{1}{(I_*)^{1/2}(I'_*)^{\de_j/2-1/2}},
	$$
	to get that
	\begin{align*}
		&\int_{\R^3\times \R_+}k^2_{ij}(v,v_*,I_*)\frac{w_{i\be}(v)}{w_{j\be}(v_*,I_*)}e^\frac{\de|m_iv-m_jv_*|^2}{64}(1+I_*)^{1/8}dv_*dI_*
		\leq \frac{C}{1+|v|}.
	\end{align*}
\noindent{\it Case 3. } $m_i<m_j$, $I_*\geq 1$ and $I_*\leq \ka(|v|^2+1)$.  Notice that $I_*\leq \ka(|v|^2+1)$, thus \eqref{reuij} and \eqref{defAB} imply that
	\begin{align*}
		k^2_{ij}&(v,v_*,I_*)\frac{w_{i\be}(v)}{w_{j\be}(v_*,I_*)}e^\frac{\de|m_iv-m_jv_*|^2}{64}(1+I_*)^{1/8}\notag\\
		&\leq C\int_{\R_+} \exp[-\frac{(\sqrt{m_i}-\sqrt{m_j})^2}{16}|u_{ij}|^2-\frac{m_im_j}{16(\sqrt{m_i}+\sqrt{m_j})^2}(|u|^2-2\frac{m_j-m_i}{m_im_j}I_*)+\frac{I_*}{8}]\notag\\
		&\qquad\times (1+\frac{1}{|u|^{\eta}}) e^{-\frac{I'_*}{8}-\frac{I_*}{8}}(I_*)^{\de_j/4-1/2}(I'_*)^{\de_j/2-1}\Psi_{mp}(I_*,I'_*)(1+I_*)^{1/8}dI'_*\notag\\
		&\leq C\int_{\R_+} \exp[-A_1^2\frac{\big||v|^2-|v_*|^2\big|^2}{|v-v_*|^2}-2A_1A_2(|v|^2-|v_*|^2)-(A_2^2+B)|v-v_*|^2+\frac{\ka}{4}|v|^2]\notag\\
		&\qquad\times (1+\frac{1}{|u|^{\eta}}) e^{-\frac{I'_*}{8}-\frac{I_*}{8}}(I_*)^{\de_j/4-1/2}(I'_*)^{\de_j/2-1}\Psi_{mp}(I_*,I'_*)(1+I_*)^{1/8}dI'_*.
	\end{align*}
	We perform similar calculations as in \eqref{boundk2pp1}, select a sufficiently small $\ka$, and choose 
	$$
	\Psi_{mp}(I_*,I'_*)=\frac{1}{(I_*)^{1/2}(I'_*)^{\de_j/2-1/2}}
	$$
	to deduce that
	\begin{align*}
		\int_{\R^3\times \R_+}&k^2_{ij}(v,v_*,I_*)\frac{w_{i\be}(v)}{w_{j\be}(v_*,I_*)}e^\frac{\de|m_iv-m_jv_*|^2}{64}(1+I_*)^{1/8}dv_*dI_*\notag\\
		\leq& C\int_{\R^3} \exp[-c\frac{\big||v|^2-|v_*|^2\big|^2}{|v-v_*|^2}-c|v-v_*|^2](1+\frac{1}{|u|^{\eta}})dv_*\notag\\
		&\qquad \times  \int_{(\R_+)^2}e^{-\frac{I_*}{8}}e^{-\frac{I'_*}{8}}(I_*)^{\de_j/4-1}(I'_*)^{-1/2}(1+I_*)^{1/8}dI'_*dI_*\notag\\
		\leq& \frac{C}{1+|v|}.
	\end{align*}
	\noindent{\it Case 4. } $m_i<m_j$, $I_*\geq 1$ and $I_*\geq \ka(|v|^2+1)$.  In this case set $\Psi_{mp}(I_*,I'_*)=\frac{1}{(I_*)^{\de_j/2}}$.  Then, it holds by \eqref{boundk2pp} and $I_*\geq \ka(|v|^2+1)$ that
	\begin{align*}
		\int_{\R^3\times \R_+}&k^2_{ij}(v,v_*,I_*)\frac{w_{i\be}(v)}{w_{j\be}(v_*,I_*)}e^\frac{\de|m_iv-m_jv_*|^2}{64}(1+I_*)^{1/8}dv_*dI_*\notag\\
		&\leq C\int_{\R^3\times \R_+}\int_{\R_+} \exp[-\frac{(\sqrt{m_i}-\sqrt{m_j})^2}{16}|u_{ij}|^2-\frac{m_im_j}{16(\sqrt{m_i}+\sqrt{m_j})^2}(|u|^2-2\frac{m_j-m_i}{m_im_j}(I_*-I'_*))]\notag\\
		&\qquad\times (1+\frac{1}{|u|^{\eta}}) e^{-\frac{I'_*}{4}}(I_*)^{-\de_j/4-1/2}(I'_*)^{\de_j/2-1}(1+I_*)^{1/8}dv_*dI_*dI'_*\notag\\
		\leq\frac{C}{1+|v|}&\int_{\R^3\times \R_+}\exp[-\frac{(\sqrt{m_i}-\sqrt{m_j})^2}{16}|u_{ij}|^2]\int_{\R_+}\exp[-\frac{m_im_j}{16(\sqrt{m_i}+\sqrt{m_j})^2}(|u|^2-2\frac{m_j-m_i}{m_im_j}(I_*-I'_*))]dI_*\notag\\
		&\qquad\times (1+\frac{1}{|u|^{\eta}}) e^{-\frac{I'_*}{4}}(I'_*)^{\de_j/2-1}dv_*dI'_*.
	\end{align*}
	One can see from \eqref{Wk2} that $|u|^2-2\frac{m_j-m_i}{m_im_j}(I_*-I'_*)=|u'|^2\geq 0$, which, together with $I_*\geq 0$, shows that
	\begin{align*}
		\int_{\R_+}\exp[-\frac{m_im_j}{16(\sqrt{m_i}+\sqrt{m_j})^2}&(|u|^2-2\frac{m_j-m_i}{m_im_j}(I_*-I'_*))]dI_*\notag\\
		=&\int^{\frac{|u|^2}{A_4}+I'_*}_0\exp[-A_3(|u|^2-A_4(I_*-I'_*))]dI_* \leq C,
	\end{align*}
	where we denoted $A_3=\frac{m_im_j}{16(\sqrt{m_i}+\sqrt{m_j})^2}$ and $A_4=2\frac{m_j-m_i}{m_im_j}$.
	It follows from the previous two inequalities that
	\begin{align*}
		\int_{\R^3\times \R_+}&k^2_{ij}(v,v_*,I_*)\frac{w_{i\be}(v)}{w_{j\be}(v_*,I_*)}e^\frac{\de|m_iv-m_jv_*|^2}{64}(1+I_*)^{1/8}dv_*dI_*\notag\\
		\leq& \frac{C}{1+|v|} \int_{\R^3}\exp[-\frac{(\sqrt{m_i}-\sqrt{m_j})^2}{16}|u_{ij}|^2](1+\frac{1}{|u|^{\eta}}) dv_*\notag\\
		\leq&\frac{C}{1+|v|}\big\{\int_{|u|\leq 1}(1+\frac{1}{|u|^{\eta}}) dv_*+ \int_{|u|\geq 1}\exp[-\frac{(\sqrt{m_i}-\sqrt{m_j})^2}{16}|u_{ij}|^2] dv_*\big\}\notag \leq \frac{C}{1+|v|}.
	\end{align*}
	Therefore, \eqref{k2mp} holds by the above four cases for $m_i\neq m_j$.  If $m_i=m_j$ the structure is similar to $k^3_{ij}$. From similar arguments leading to \eqref{k2mimj}, one can obtain that
	\begin{align*}
		k^2_{ij}&(v,v_*,I_*)\frac{w_{i\be}(v)}{w_{j\be}(v_*,I_*)}e^\frac{\de|v-v_*|^2}{64}(1+I_*)^{\frac{1}{8}}\notag\\ &\leq C(I_*)^{\de_j/4-1/2}\frac{1}{|u|} \int_{(\R^3)^{\perp n}\times \R_+}\exp[-\frac{m_j}{4}\big( |V_\perp+\om'|^2+\frac{(|u|-2|v|\cos\tilde{\theta}+2\frac{\De I}{m_i|u|})^2}{4}+\frac{|u|^2}{4} \big)]\notag\\
		&\hspace{5cm}\times e^{-\frac{I'_*}{4}}(I^\prime_*)^{\de_j/2-1}(1+I_*)^{1/8}\frac{1}{\widehat{E}_{ij}^{(\eta+\de_j)/2}} d\om' dI^\prime_*,
	\end{align*}
	with the notations introduced in \eqref{k2velocity} and \eqref{notationsk3}.
	Then \eqref{k2mp} holds by using
	\begin{align*}
		\frac{1}{\widehat{E}_{ij}^{\de_j/2}}\leq& \frac{1}{(I_*)^{\de_j/4}(I'_*)^{\de_j/4}}\chi_{\{I_*\leq 1\}}+\frac{1}{(I_*)^{\de_j/2}}\chi_{\{I_*> 1\}},
	\end{align*}
	and integrating in $(v_*,I_*)$.
\end{proof}

The non-integrability in Poly-Mono collision occurs mainly in the estimates on $k^3_{ij}$, where the algebraic component for $I$ and $I_*$ in \eqref{1boundk3} becomes
$
(II_*)^{\de_i/4-1/2}\frac{(1+I_*)^{1/8}}{\widetilde{E}^{\de_i/2}_{ij}}.
$
If we use the same approach as in Lemma \ref{leK}, such component is bounded by 
$$
(II_*)^{\de_i/4-1/2}\frac{(1+I_*)^{1/8}}{(I)^{\psi}(I_*)^{\de_i/2-\psi}},\qquad 0\leq\psi\leq \frac{\de_i}{2},
$$
leading to an non-integrable integrand on $I_*$ over $[0,\infty)$.  Instead, we will use the dependency on $|u|$ in $\widetilde{E}^{\de_i/2}_{ij}$ to obtain integrability.   This procedure, however, will cause a slower algebraic decay in $v$ compared to the monatomic gas case, see \cite{Guo}.
\begin{lemma}[Poly-Mono Collision]\label{leKpm}
	For $i> p$ and $j\leq p$, the linearized operator defined in \eqref{DefKij} can be written as
	\begin{align*}
		&\dis 	K_{ij}f=	K_{ij}f(v,I)=\int_{\R^3}(k^2_{ij}(v,I,v_*)-k^1_{ij}(v,I,v_*))f_j(v_*)dv_*+\int_{\R^3\times \R_+}k^3_{ij}(v,I,v_*,I_*)f_i(v_*,I_*)dv_*dI_*,
	\end{align*}
	with
\begin{align}
	\int_{\R^3}& k^1_{ij}(v,I,v_*)\frac{w_{i\be}(v,I)}{w_{j\be}(v_*)}e^\frac{\de|v-v_*|^2}{64} dv_*\leq \frac{C}{1+|v|^{\frac{1}{2}}+I^\frac{1}{8}},\label{kpm}\\
	\int_{\R^3} &k^2_{ij}(v,I,v_*)\frac{w_{i\be}(v,I)}{w_{j\be}(v_*)}e^\frac{\de|m_iv-m_jv_*|^2}{64} dv_*\leq \frac{C}{1+|v|^{\frac{1}{2}}+I^\frac{1}{8}},\label{k2pm}\\
	\int_{\R^3\times\R_+} &k^3_{ij}(v,I,v_*,I_*)\frac{w_{i\be}(v,I)}{w_{i\be}(v_*,I_*)}e^\frac{\de|v-v_*|^2}{64}(1+I_*)^{\frac{1}{8}} dv_*dI_*\leq \frac{C}{1+|v|^{\frac{1}{2}}+I^\frac{1}{8}},\label{k3pm}
\end{align}
	where $\dis \de=\min_{1\leq i,j\leq n}\{m_i,\frac{(\sqrt{m_i}-\sqrt{m_j})^2}{16(m_i-m_j)^2}\}.$
\end{lemma}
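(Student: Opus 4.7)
The plan is to parallel the general strategy of Lemmas \ref{leK} and \ref{leKmp}: for each kernel $k^{1,2,3}_{ij}$ I would start from the Poly-Mono specialization of \eqref{DefKij}, use the Dirac constraints from \eqref{DefW} and the cross-section bound \eqref{si} to rewrite it as an explicit weighted integral, and then bound the resulting algebraic and exponential factors. The essential simplification compared to Poly-Poly is that $j\le p$ contributes no internal-energy variables, so $(I_*, I'_*)$ never appear in the original integrand. The essential new complication, concentrated in $k^3_{ij}$, is that after the swap $Z_*\leftrightarrow Z'$ the kernel depends on both polyatomic internal energies $I$ and $I_*$, while the denominator $\widetilde E_{ij}^{(\eta+\de_i)/2}$ from \eqref{si} depends only on $I$ and $|\tilde u|$; the naive allocation scheme of Lemma \ref{leK} then produces a non-integrable $I_*$-tail at infinity.

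The treatment of $k^1_{ij}$ mirrors \eqref{rek1ij}-\eqref{k1pp}: changing variables via \eqref{changeofvariables}, using \eqref{changeM} with the $j$-components absent, and integrating in $I'$ gives a pointwise Gaussian bound of the form $C e^{-m_i|v|^2/16-m_j|v_*|^2/16-I/8} I^{\al}$ for suitable $\al$, which after $v_*$-integration against $e^{\de|v-v_*|^2/64}w_{i\be}/w_{j\be}$ yields \eqref{kpm}. For $k^2_{ij}$ with $m_i\ne m_j$ I would follow \eqref{rek2}-\eqref{boundk2}, replacing the four-variable weight $\Psi$ of \eqref{DefPsi} by a two-variable factor $\Psi_{pm}(I,I')=I^{-\psi_1}(I')^{-\psi_2}$ with $\psi_1+\psi_2 = \de_i/2$. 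The six-case analysis of Lemma \ref{leK} collapses to a low-$I$ subregion $I\le\ka(1+|v|^2)$, handled by the positivity inequality \eqref{positivity} together with \eqref{reuij} and \eqref{rev}, and a large-$I$ subregion $I\ge\ka(1+|v|^2)$, handled by a direct allocation of $(\psi_1,\psi_2)$. The case $m_i=m_j$ uses the alternative representation \eqref{k2mimj}, and together these yield \eqref{k2pm}.

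The main obstacle is $k^3_{ij}$. Adapting \eqref{Gk3}-\eqref{1boundk3} to the Poly-Mono setting (where now $\widetilde E_{ij} = \tfrac{\mu_{ij}}{2}|\tilde u|^2 + I$, $\De_* I = I_*-I$, and no $dI'\,dI'_*$ integration is present), the pointwise bound becomes
\begin{equation*}
k^3_{ij}\,\frac{w_{i\be}(v,I)}{w_{i\be}(v_*,I_*)}e^{\de|v-v_*|^2/64}(1+I_*)^{1/8}\le \frac{C\,(II_*)^{\de_i/4-1/2}(1+I_*)^{1/8}}{|u|\,\widetilde E_{ij}^{(\eta+\de_i)/2}}\,\mathcal{E}(v,v_*,I,I_*),
\end{equation*}
where $\mathcal{E}$ denotes the product of $v'$-$v'_*$ Gaussians arising as in \eqref{computev}. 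Distributing $\widetilde E_{ij}^{(\eta+\de_i)/2}$ only among $I$ and $I_*$ as in Lemma \ref{leK} fails to produce an integrable $I_*$-tail at infinity, exactly as flagged in the discussion preceding the lemma statement. The remedy is to trade a half-power of $|\tilde u|$-decay for $I_*$-integrability through the conservation identity $\widetilde E_{ij} = \tfrac{\mu_{ij}}{2}|u_*|^2 + I_*$: in the subregion $I_*\ge 1$ we invoke the lower bound $\widetilde E_{ij}^{(\eta+\de_i)/2}\ge C\,|\tilde u|^{1/2}(I_*)^{(\eta+\de_i-1/2)/2}$, which renders the $I_*$-integrand finite at infinity while the $|\tilde u|^{-1/2}$ factor is absorbed in the $v_*$-integration via the spherical-coordinates calculation \eqref{kmp3}. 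In the complementary subregion $I_*\le 1$ the standard allocation of Lemma \ref{leK} applies. Combining the two subregions with the Gaussian control on $v',v'_*$ produces \eqref{k3pm}; because the trade $|\tilde u|\leftrightarrow I_*$ consumes half of the $|u|$-decay, the resulting rate in $v$ is only $(1+|v|)^{-1/2}$, strictly weaker than the $(1+|v|)^{-1}$ of the pure monatomic theory of \cite{Guo}, exactly as anticipated in the commentary preceding the lemma.
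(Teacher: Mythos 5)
Your handling of $k^1_{ij}$ and $k^2_{ij}$ is essentially the paper's own route (Gaussian pointwise bound via \eqref{changeofvariables}--\eqref{changeM} for $k^1_{ij}$; for $k^2_{ij}$ the split by the sign of $m_i-m_j$ and by $I$ versus $\ka(1+|v|^2)$, using \eqref{reuij}, \eqref{rev}, \eqref{positivity} and \eqref{propertyv}), so that part of the plan is sound. The genuine gap is in $k^3_{ij}$, which is precisely the new difficulty of this lemma, and your proposed remedy does not work. Power count: after the swap, the $I_*$-dependence of the integrand is $(I_*)^{\de_i/4-1/2}$ (from $(I/I_*)^{\de_i/4-1/2}$ times the factor $(I_*)^{\de_i/2-1}$ coming from \eqref{si}) multiplied by the weight $(1+I_*)^{1/8}$. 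With your interpolation $\widetilde E_{ij}^{(\eta+\de_i)/2}\geq C|\tilde u|^{1/2}(I_*)^{(\eta+\de_i-1/2)/2}$ the net $I_*$-exponent is $\de_i/4-1/2+1/8-(\eta+\de_i-1/2)/2=-\de_i/4-1/8-\eta/2$, which for $\de_i=2$ (allowed, e.g. diatomic species) and $\eta<3/4$ is larger than $-1$: the integral over $\{I_*\geq 1\}$ still diverges, so the claim that this ``renders the $I_*$-integrand finite at infinity'' is false. Worse, you have reserved no share of $\widetilde E_{ij}$ to cancel the growing factor $(I)^{\de_i/4-1/2}$ and to produce the required decay in $I$; a short bookkeeping shows that no static allocation of the exponent $(\eta+\de_i)/2$ among $|\tilde u|$, $I$ and $I_*$ can simultaneously yield an integrable $I_*$-tail, the $I$-decay, and the weight control when $\eta<1/2$, so the obstruction is structural, not a matter of tuning the split.

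The missing idea is that the Gaussian factor itself depends on $I_*$ through $\De_* I=I_*-I$ inside $\Phi=|u|-2|v|\cos\tilde\theta+2\frac{\De_* I}{m_i|u|}$, and this coupling, not the cross section, must supply the $I_*$-integrability. The paper splits at $I_*=|v|+1$ (not at $I_*=1$): on $\{I_*\leq |v|+1\}$ it integrates directly, the resulting power of $|v|$ from $\int_0^{|v|+1}$ being absorbed by the $(1+|v|)^{-1}$ produced by the $v_*$-integration as in \eqref{kmp3}, see \eqref{k3largeI*}; on $\{I_*\geq |v|+1\}$ it performs the change of variables $dI_*=\frac{m_i}{2}|u|\,d\Phi$, so that integrability comes from the Gaussian in $\Phi$ (the Jacobian $|u|$ cancelling the $1/|u|$ prefactor), while the non-integrable powers $(I_*)^{-3/4}$ and $(I_*)^{-1}$ from \eqref{controltildeEpm} are spent, via the constraint $I_*\geq |v|+1$, to generate the $(1+|v|)^{-1/2}$ and $I^{-1/4}$ decay; see \eqref{k3largeI*pm}--\eqref{k3smallI*}. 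This dynamic trade of velocity decay, integrability and internal-energy decay is exactly what your static interpolation of $\widetilde E_{ij}$ cannot reproduce, so as written your proof of \eqref{k3pm} does not close.
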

\begin{proof}
	Applying \eqref{changeofvariables} and \eqref{changeM} to the general bound for $k^1_{ij}$ in \eqref{Gk1} leads to
	\begin{align*}
		k^1_{ij}&(v,I,v_*)=\int_{\S^2\times \R_+} \frac{ (M'_iM'_{j*})^{1/2}}{(I')^{\de_i/4-1/2}}(I)^{\de_i/4-1/2}\si_{ij}|u|d\om  dI^\prime \notag\\
		&\leq Ce^{-\frac{m_i|v|^2}{8}-\frac{m_j|v_*|^2}{8}-\frac{I}{4}}(I)^{\de_i/4-1/2}\int_{\R_+} e^{-\frac{m_i|v'|^2}{8}-\frac{m_j|v'_*|^2}{8}-\frac{I'}{4}}\frac{(I')^{\de_i/2-1}\chi_{\{0\leq I'\leq E_{ij}\}}}{E_{ij}^{(\eta+\de_i-1)/2}} dI^\prime \notag\\
		&\leq Ce^{-\frac{m_i|v|^2}{8}-\frac{m_j|v_*|^2}{8}-\frac{I}{4}}(I)^{\de_i/4-1/2}E_{ij}^{(1-\eta)/2}\int_{0\leq I'\leq E_{ij}}\frac{(I')^{\de_i/2-1}}{E_{ij}^{\de_i/2}} dI^\prime\leq Ce^{-\frac{m_i|v|^2}{16}-\frac{m_j|v_*|^2}{16}-\frac{I}{8}},
	\end{align*}
	which yields \eqref{kpm} by taking integral in $v_*$.  Next, we bound $k^3_{ij}$ from the general representation \eqref{Gk3} with notations defined in \eqref{notationk3}, \eqref{Wk3} and \eqref{conservationk3}.
	 Using similar arguments in \eqref{cv}, \eqref{k3}, \eqref{2boundk3}, \eqref{controle}, together with \eqref{computev} and the fact that $$\frac{w^2_{i\be}(v,I)}{w^2_{i\be}(v_*,I_*)}\leq C \Big(1+\big|m_i|v|^2-m_i|v_*|^2+I-I_*\big|^2\Big)^\be\leq C e^\frac{m_j|v'|^2+m_j|v'_*|^2}{16},$$ we obtain that
	\begin{align*}
		\int_{\R^3\times (\R_+)^3}&k^3_{ij}(v,I,v_*,I_*)\frac{w_{i\be}(v,I)}{w_{i\be}(v_*,I_*)}e^\frac{\de|v-v_*|^2}{64}(1+I_*)^{\frac{1}{8}}dI^\prime dI^\prime_*dI_*dv_*\notag\\ \leq&\int_{ \R^3\times (\R_+)^3}\frac{C}{|u|}\exp[-\frac{m_j}{4}\big(\frac{(|u|-2|v|\cos\tilde{\theta}+2\frac{\De_* I}{m_i|u|})^2}{4}+\frac{m_i^2|u|^2}{4m_j^2} \big)](II_*)^{\de_i/4-1/2}\frac{(1+I_*)^{1/8}}{\widetilde{E}_{ij}^{\de_i/2}}dI_*dv_*.
	\end{align*}
	We split the integration domain in two pieces. First for $I_*\leq |v|+1$, it holds by $$(II_*)^{\de_i/4-1/2}\frac{(1+I_*)^{1/8}}{\widetilde{E}_{ij}^{\de_i/2}}\leq C(II_*)^{-1/2}(1+I_*)^{1/8}\chi_{\{I\geq1\}}+C(I_*)^{-3/4}|u|^{-1/2}(1+I_*)^{1/8}\chi_{\{I\leq1\}}$$
	that
	\begin{align}\label{k3largeI*}
		\int_{\R^3\times\R_+}k^3_{ij}(v,I,&v_*,I_*)\frac{w_{i\be}(v,I)}{w_{i\be}(v_*,I_*)}e^\frac{\de|v-v_*|^2}{64}(1+I_*)^{1/8}dv_*dI_*\notag\\
		\leq&\frac{1}{(1+|v|)I^{1/2}}\chi_{\{I\geq1\}}\int^{|v|+1}_0(I_*)^{-1/2}(1+I_*)^{1/8}dI_* \notag\\
	&\qquad+\frac{1}{(1+|v|)}\chi_{\{I\leq1\}}\int^{|v|+1}_0(I_*)^{-3/4}(1+I_*)^{1/8}dI_*\leq\frac{1}{1+|v|^{1/2}+I^{1/2}}.
	\end{align}
In the first inequality above, the decay in $v$ is obtained using the calculations in \eqref{4boundk3}, \eqref{5boundk3}, \eqref{6boundk3}.  Whereas, for $I_*\geq |v|+1$, it follows from 
	\begin{align}\label{controltildeEpm}
		(II_*)^{\de_i/4-1/2}\frac{(1+I_*)^{1/8}}{\widetilde{E}_{ij}^{\de_i/2}}\leq C(I)^{-1/4}(I_*)^{-3/4}(1+I_*)^{1/8}\chi_{\{I\geq1\}}+C(I_*)^{-1}(1+I_*)^{1/8}\chi_{\{I\leq1\}}
		\end{align}
	that
	\begin{align}\label{k3largeI*pm}
		&\int_{\R^3\times\R_+}k^3_{ij}(v,I,v_*,I_*)\frac{w_{i\be}(v,I)}{w_{i\be}(v_*,I_*)}e^\frac{\de|v-v_*|^2}{64}(1+I_*)^{1/8}dv_*dI_*\notag\\
		\leq C&(I)^{-1/4}\chi_{\{I\geq1\}}\int_{ \R^3\times\R_+}\frac{1}{|u|}\exp[-\frac{m_j}{4}\big(\frac{(|u|-2|v|\cos\tilde{\theta}+2\frac{\De_* I}{m_i|u|})^2}{4}+\frac{m_i^2|u|^2}{4m_j^2} \big)](I_*)^{-3/4}(1+I_*)^{1/8}dv_*dI_*\notag\\
		&+C\chi_{\{I\leq1\}}\int_{ \R^3\times\R_+}\frac{1}{|u|}\exp[-\frac{m_j}{4}\big(\frac{(|u|-2|v|\cos\tilde{\theta}+2\frac{\De_* I}{m_i|u|})^2}{4}+\frac{m_i^2|u|^2}{4m_j^2} \big)](I_*)^{-1}(1+I_*)^{1/8}dv_*dI_*.
	\end{align}
	We perform the change of variable $dI_*=\frac{m_i}{2}|u|d\Phi$ with $\Phi:=|u|-2|v|\cos\tilde{\theta}+2\frac{\De_* I}{m_i|u|}$ to obtain that
	\begin{align}\label{k3smallI*}
		\int_{\R^3\times\R_+}&k^3_{ij}(v,I,v_*,I_*)\frac{w_{i\be}(v,I)}{w_{i\be}(v_*,I_*)}e^\frac{\de|v-v_*|^2}{64}(1+I_*)^{1/8}dv_*dI_*\notag\\
		\leq&\frac{C}{1+I^{1/4}}\int_{ \R^3\times\R_+}\exp[-\frac{m_j}{4}\big(\frac{\Phi^2}{4}+\frac{m_i^2|u|^2}{4m_j^2} \big)](I_*)^{-3/4}(1+I_*)^{1/8}dud\Phi\notag\\
		\leq&\frac{C}{1+I^{1/4}}\frac{1}{(1+|v|)^{1/2}}\int_{ \R^3\times\R_+}\exp[-\frac{m_j}{4}\big(\frac{\Phi^2}{4}+\frac{m_i^2|u|^2}{4m_j^2} \big)]dud\Phi\leq\frac{C}{1+|v|^{1/2}+I^{1/4}}.
	\end{align}
	We combine \eqref{k3largeI*} and \eqref{k3smallI*} to get that
	\begin{align*}
		\int_{\R^3\times\R_+}&k^3_{ij}(v,I,v_*,I_*)\frac{w_{i\be}(v,I)}{w_{i\be}(v_*,I_*)}e^\frac{\de|v-v_*|^2}{64}(1+I_*)^{1/8}dv_*dI_*\notag\\
		\leq&\frac{C}{1+I^{1/4}}\int_{ \R^3\times\R_+}\exp[-\frac{m_j}{4}\big(\frac{\Phi^2}{4}+\frac{m_i^2|u|^2}{4m_j^2} \big)](I_*)^{-3/4}(1+I_*)^{1/8}dud\Phi\notag\\
		\leq&\frac{C}{1+I^{1/4}}\frac{1}{(1+|v|)^{1/2}}\int_{ \R^3\times\R_+}\exp[-\frac{m_j}{4}\big(\frac{\Phi^2}{4}+\frac{m_i^2|u|^2}{4m_j^2} \big)]dud\Phi\leq\frac{C}{1+|v|^{1/2}+I^{1/4}},
	\end{align*}
which, together with \eqref{k3largeI*}, yields \eqref{k3pm}.  Next, for $k^2_{ij}$ in the case $m_i\neq m_j$ we perform similar calculations leading to \eqref{boundk2} to obtain that
	\begin{align}\label{kpm2}
		k^2_{ij}(v,I,&v_*)\frac{w_{i\be}(v,I)}{w_{j\be}(v_*)}e^\frac{\de|m_iv-m_jv_*|^2}{64}\notag\\
		\leq& C\int_{\R_+} \exp[-\frac{(\sqrt{m_i}-\sqrt{m_j})^2}{16}|u_{ij}|^2-\frac{m_im_j}{16(\sqrt{m_i}+\sqrt{m_j})^2}(|u|^2-2\frac{m_j-m_i}{m_im_j}(I'-I))]\notag\\
		&\qquad\times (1+\frac{1}{|u|^{\eta}}) e^{-\frac{I'}{4}}(I')^{\de_i/2-1}(I)^{\de_i/4-1/2}\Psi_{pm}(I,I',|u|)dI',
	\end{align}
	where $\Psi_{pm}(I,I',|u|):=(I)^{-\psi'_{1}}(I')^{-\psi'_{2}}|u|^{-2\psi'_{3}}$ with $\psi'_{i}>0$ arbitrarily chosen such that $\psi'_{1}+\psi'_{2}+\psi'_{3}=\de_i/2$.  Now, in the case $m_j>m_i$, $k^2_{ij}$ decays in $I$ and $I'$ exponentially since
	\begin{align*}
		k^2_{ij}(v,I,&v_*)\frac{w_{i\be}(v,I)}{w_{j\be}(v_*)}e^\frac{\de|m_iv-m_jv_*|^2}{64}\notag\\
		&\leq C\int_{\R_+} \exp[-\frac{(\sqrt{m_i}-\sqrt{m_j})^2}{16}|u_{ij}|^2-\frac{m_im_j}{16(\sqrt{m_i}+\sqrt{m_j})^2}(|u|^2+2\frac{m_j-m_i}{m_im_j}I)]\notag\\
		&\qquad\times (1+\frac{1}{|u|^{\eta}})(I)^{\de_i/4-1/2} e^{-\frac{I'}{8}}(I')^{\de_i/2-1}\Psi_{pm}(I,I',|u|)dI'.
	\end{align*}
	Thus, choosing $\Psi_{pm}(I,I',|u|)=(I')^{-\de_i/2+1/2}|u|^{-1}$ we obtain that
	\begin{align*}
		\int_{\R^3}k^2_{ij}&(v,I,v_*)\frac{w_{i\be}(v,I)}{w_{j\be}(v_*)}e^\frac{\de|m_iv-m_jv_*|^2}{64}dv_*\notag\\
		\leq& \frac{C}{1+I} \int_{\R^3}\exp[-\frac{(\sqrt{m_i}-\sqrt{m_j})^2}{16}|u_{ij}|^2-\frac{m_im_j}{16(\sqrt{m_i}+\sqrt{m_j})^2}|u|^2](1+\frac{1}{|u|^2})dv_*.
	\end{align*}
	Then similar arguments as in \eqref{boundk2pp1} and \eqref{propertyv} yield
	\begin{align*}
		&\int_{\R^3}k^2_{ij}(v,I,v_*)\frac{w_{i\be}(v,I)}{w_{j\be}(v_*)}e^\frac{\de|m_iv-m_jv_*|^2}{64}dv_*
		\leq \frac{C}{1+|v|+I}.
	\end{align*}
In the case $m_j>m_i$ and $I\geq \ka(|v|^2+1)$ we set $\Psi_{pm}(I,I',|u|)=(I)^{-\de_i/2}$ and use \eqref{kpm2} to obtain that
	\begin{align*}
		\int_{\R^3}k^2_{ij}&(v,I,v_*)\frac{w_{i\be}(v,I)}{w_{j\be}(v_*)}e^\frac{\de|m_iv-m_jv_*|^2}{64}dv_*\notag\\
		&\leq C\int_{\R^3} \exp[-\frac{(\sqrt{m_i}-\sqrt{m_j})^2}{16}|u_{ij}|^2](1+\frac{1}{|u|^{\eta}})(I)^{-\de_i/4-1/2} dv_*\notag\\
		&\leq\frac{C}{1+|v|+I}\Big\{\int_{|u|\leq 1}(1+\frac{1}{|u|^{\eta}}) dv_*+\int_{|u|\geq1} \exp[-\frac{(\sqrt{m_i}-\sqrt{m_j})^2}{16}|u_{ij}|^2]dv_*\Big\} \leq \frac{C}{1+|v|+I}.
	\end{align*}
If $m_j>m_i$ and $I\leq \ka(|v|^2+1)$, by choosing $\ka$ to be small, we use the arguments in \eqref{boundk2c61} and \eqref{positivity} give that
	\begin{align*}
		k^2_{ij}(v,I,&v_*)\frac{w_{i\be}(v,I)}{w_{j\be}(v_*)}e^\frac{\de|m_iv-m_jv_*|^2}{64}\notag\\
		& \leq C\int_{\R_+} \exp[-c\frac{\big||v|^2-|v_*|^2\big|^2}{|v-v_*|^2}-c|v-v_*|^2] (1+\frac{1}{|u|^{\eta}}) e^{-\frac{I'}{8}}(I')^{\de_i/2-1}(I)^{\de_i/4-1/2}\Psi_{pm}(I,I',|u|)dI'.
	\end{align*} 
Choose
\begin{equation*}
\Psi_{pm}(I,I',|u|)=(I)^{-\de_i/2}\chi_{\{I>1\}}+(I')^{-\de_i/2-1/2}|u|^{-1}\chi_{\{I\leq1\}},
\end{equation*}
to obtain that
	\begin{align*}
		&\int_{\R^3}k^2_{ij}(v,I,v_*)\frac{w_{i\be}(v,I)}{w_{j\be}(v_*)}e^\frac{\de|m_iv-m_jv_*|^2}{64}dv_*\leq \frac{C}{1+|v|+I}.
	\end{align*}
Overall, given the aforementioned arguments, \eqref{k2pm} holds for $m_j\neq m_j$.  Finally, in the case $m_i=m_j$, recalling the notations in \eqref{k2velocity} and \eqref{notationsk3}, the similar structure to $k^3_{ij}$ yields
	\begin{align*}
		&k^2_{ij}(v,I,v_*)\frac{w_{i\be}(v,I)}{w_{j\be}(v_*)}e^\frac{\de|v-v_*|^2}{64}\leq \frac{C}{|u|}(I)^{\de_i/4-1/2} \\
		&\times\int_{(\R^3)^{\perp n}\times \R_+}\exp[-\frac{m_j}{4}\big( |V_\perp+\om'|^2+\frac{(|u|-2|v|\cos\tilde{\theta}+2\frac{\De I}{m_i|u|})^2}{4}+\frac{|u|^2}{4} \big)]e^{-\frac{I'}{4}}(I')^{\de_i/2-1}\frac{1}{\widehat{E}_{ij}^{(\eta+\de_i)/2}} d\om' dI^\prime.
	\end{align*}
	Therefore, \eqref{k2mp} holds using the estimate
	\begin{align*}
		\frac{1}{\widehat{E}_{ij}^{\de_i/2}}\leq& \frac{1}{(I')^{\de_j/4-1/8}|u|^{1/4}}\chi_{\{I\leq 1\}}+\frac{1}{(I)^{\de_j/2}}\chi_{\{I\geq 1\}},
	\end{align*}
and the arguments given in \eqref{4boundk3}, \eqref{5boundk3} and \eqref{6boundk3}.
\end{proof}
At this point the argument for the case Mono-Mono collisions is shorter since several ingredients of the proof are contained in the previous cases.
\begin{lemma}[Mono-Mono Collision]\label{leKmm}
	For $i,j\leq p$, the linearized operator $K_{ij}$ can be written as
	\begin{align*}
		&\dis 	K_{ij}f=	K_{ij}f(v)=\int_{\R^3}(k^2_{ij}(v,v_*)f_j(v_*)+k^3_{ij}(v,v_*)f_i(v_*)-k^1_{ij}(v,v_*)f_j(v_*) )dv_*,
	\end{align*}
	with
	\begin{align}
		&\int_{\R^3} k^1_{ij}(v,v_*)\frac{w_{i\be}(v)}{w_{j\be}(v_*)}e^\frac{\de|v-v_*|^2}{64} dv_*\leq \frac{C}{1+|v|^{\frac{1}{2}}},\label{kmm}\\
		&\int_{\R^3} k^2_{ij}(v,v_*)\frac{w_{i\be}(v)}{w_{j\be}(v_*)}e^\frac{\de|m_iv-m_jv_*|^2}{64} dv_*\leq \frac{C}{1+|v|^{\frac{1}{2}}},\label{k2mm}\\
		&\int_{\R^3} k^3_{ij}(v,v_*)\frac{w_{i\be}(v)}{w_{i\be}(v_*)}e^\frac{\de|v-v_*|^2}{64} dv_*\leq \frac{C}{1+|v|^{\frac{1}{2}}},\label{k3mm}
	\end{align}
	where $\dis \de=\min_{1\leq i,j\leq n}\{m_i,\frac{(\sqrt{m_i}-\sqrt{m_j})^2}{16(m_i-m_j)^2}\}.$
\end{lemma}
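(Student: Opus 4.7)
The plan is to exploit the fact that when $i,j\leq p$ both species are purely monatomic, so all internal energy variables $I,I_*,I',I'_*$ are absent and the integrals reduce to those already encountered in Lemmas \ref{leK}--\ref{leKpm} with the $I$-factors removed. Consequently the three bounds \eqref{kmm}--\eqref{k3mm} are strictly easier than the polyatomic analogues already proved, and in fact one can obtain decay of order $(1+|v|)^{-1}$ rather than the weaker rate $(1+|v|)^{-1/2}$ needed here.

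\textbf{Step 1 (the term $k^1_{ij}$).} Repeat the derivation leading to \eqref{rek1}, but with no $I$-integration. The Jacobian $dv'dv'_* = |u'|^2 dV'_{ij}d|u'|d\omega$ and the $\delta$-functions in \eqref{DefW} produce
\begin{equation*}
k^1_{ij}(v,v_*)=\int_{\S^2}(M'_i M'_{j*})^{1/2}\sigma_{ij}\,|u|\,d\omega.
\end{equation*}
Inserting \eqref{si} (with $\delta_i=\delta_j$ contributions removed) and using $M_iM_{j*}=M'_iM'_{j*}$, one obtains an exponential upper bound of the form $Ce^{-m_i|v|^2/16-m_j|v_*|^2/16}$, and \eqref{kmm} follows after integrating in $v_*$ against the Gaussian weight $e^{\delta|v-v_*|^2/64}\leq e^{(m_i|v|^2+m_j|v_*|^2)/32}$.

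\textbf{Step 2 (the term $k^3_{ij}$).} Use the general representation \eqref{Gk3}--\eqref{Wk3} with the $I$-factors removed; in particular the change of variables \eqref{cv} and the computation \eqref{computev} remain valid verbatim. One obtains
\begin{equation*}
k^3_{ij}(v,v_*)\leq \frac{C}{|u|}\int_{(\R^3)^{\perp n}}\exp\!\Bigl[-\tfrac{m_j}{4}\bigl(|V_\perp+\omega_*|^2+\tfrac{(|u|-2|v|\cos\tilde\theta)^2}{4}+\tfrac{m_i^2|u|^2}{4m_j^2}\bigr)\Bigr]\frac{d\omega_*}{|\omega_*|^\eta}.
\end{equation*}
After carrying $\frac{w_{i\beta}(v)}{w_{i\beta}(v_*)}e^{\delta|v-v_*|^2/64}$ through using \eqref{controle}, the two-dimensional integral in $\omega_*$ is finite, and the bound \eqref{k3mm} is obtained by repeating exactly the case-split $|u|\geq|v|$ vs.\ $|u|\leq|v|$ from \eqref{5boundk3}--\eqref{6boundk3}; in fact the resulting decay is $C/(1+|v|)$, which is stronger than what is claimed.

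\textbf{Step 3 (the term $k^2_{ij}$).} Split on whether $m_i=m_j$. If $m_i\neq m_j$, follow the derivation of \eqref{Wk2}--\eqref{boundk2} with all internal energy variables suppressed; this produces
\begin{equation*}
k^2_{ij}(v,v_*)\leq C\exp\!\Bigl[-\tfrac{(\sqrt{m_i}-\sqrt{m_j})^2}{16}|u_{ij}|^2-\tfrac{m_im_j}{16(\sqrt{m_i}+\sqrt{m_j})^2}|u|^2\Bigr]\Bigl(1+\tfrac{1}{|u|^\eta}\Bigr).
\end{equation*}
Multiplying by $w_{i\beta}(v)/w_{j\beta}(v_*)\,e^{\delta|m_iv-m_jv_*|^2/64}$ and applying the algebraic identities \eqref{reuij} and \eqref{rev} reduces matters precisely to \emph{Case 6} of Lemma \ref{leK}, and the positivity estimate \eqref{positivity} together with \eqref{propertyv} yields \eqref{k2mm}. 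If instead $m_i=m_j$, the representation \eqref{rek2ij} acquires the same structure as $k^3_{ij}$ (via the change of variables $dv'dv'_*=d\omega'd\zeta_+d\omega_*$ as in \eqref{cv}), so the estimate is identical to Step 2.

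The only point requiring any care is Step 3 in the subcase $m_i\neq m_j$, where the large coefficient in $|u_{ij}|^2=|\frac{m_i}{m_i-m_j}v-\frac{m_j}{m_i-m_j}v_*|^2$ must be controlled against the Gaussian weight $e^{\delta|m_iv-m_jv_*|^2/64}$; this is handled precisely by the choice $\delta\leq\frac{(\sqrt{m_i}-\sqrt{m_j})^2}{16(m_i-m_j)^2}$ already encoded in the hypothesis, so no genuine obstacle arises. The overall proof is therefore a direct specialisation of the three previous lemmas.
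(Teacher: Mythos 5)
Your proposal is correct and follows essentially the same route as the paper: the paper's proof of this lemma is likewise a direct specialisation of the earlier cases, bounding $k^1_{ij}$ via \eqref{changeofvariables}, \eqref{changeM} and \eqref{Gk1}, $k^3_{ij}$ via \eqref{cv}, \eqref{k3}, \eqref{2boundk3} with the same $|u|\gtrless|v|$ split, and $k^2_{ij}$ via \eqref{boundk2}, \eqref{positivity}, \eqref{propertyv} (and the $m_i=m_j$ structure of \eqref{k2mimj}), obtaining the same $C/(1+|v|)$ decay that is stronger than the stated $C/(1+|v|^{1/2})$.
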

\begin{proof}
	We apply change of variable \eqref{changeofvariables} and the property \eqref{changeM} to the bound of $k^1_{ij}$ in \eqref{Gk1} to obtain that
	\begin{align*}
		k^1_{ij}(v_*)&=\int_{\S^2} (M'_iM'_{j*})^{1/2}\si_{ij}|u|d\om\notag\\
		&\leq Ce^{-\frac{m_i|v|^2}{8}-\frac{m_j|v_*|^2}{8}}\int_{\S^2} e^{-\frac{m_i|v'|^2}{8}-\frac{m_j|v'_*|^2}{8}}E_{ij}^{(1-\eta)/2} d\om\leq Ce^{-\frac{m_i|v|^2}{16}-\frac{m_j|v_*|^2}{16}}, 
	\end{align*}
	which gives \eqref{kmm} by taking the integral in $v_*$. For the term $k^3_{ij}$ use similar calculations as in \eqref{cv}, \eqref{k3}, and \eqref{2boundk3}, together with the fact that $\frac{w_{i\be}(v)}{w_{i\be}(v_*)}\leq C e^\frac{\de|v-v_*|^2}{64}\leq C e^\frac{m_j|v'|^2+m_j|v'_*|^2}{16}$, to obtain that
	\begin{align*}
		&\int_{\R^3}k^3_{ij}(v,v_*)\frac{w_{i\be}(v)}{w_{i\be}(v_*)}e^\frac{\de|v-v_*|^2}{64}dv_*
		\leq\int_{ \R^3\times\R_+}\frac{C}{|u|}e^{-\frac{m_j}{4}\left(\frac{(|u|-2|v|\cos\tilde{\theta})^2}{4}+\frac{m_i^2|u|^2}{4m_j^2} \right)}dv_*\leq \frac{1}{1+|v|},
	\end{align*}
	which shows \eqref{k3mm}.  Next, for the term  $k^2_{ij}$ if $m_i\neq m_j$ we perform similar calculations as in \eqref{boundk2}, \eqref{boundk2c61}, and \eqref{positivity}, to obtain that
	\begin{align*}
		\int_{\R^3}k^2_{ij}&(v,v_*)\frac{w_{i\be}(v)}{w_{j\be}(v_*)}e^\frac{\de|m_iv-m_jv_*|^2}{64}dv_*\notag\\
		&\leq C\int_{\R^3} \exp[-\frac{(\sqrt{m_i}-\sqrt{m_j})^2}{16}|u_{ij}|^2-\frac{m_im_j}{16(\sqrt{m_i}+\sqrt{m_j})^2}|u|^2](1+\frac{1}{|u|^{\eta}})dv_*\notag\\
		&\leq C\int_{\R^3} \exp[-c\frac{\big||v|^2-|v_*|^2\big|^2}{|v-v_*|^2}-c|v-v_*|^2](1+\frac{1}{|u|^{\eta}})dv_* \leq \frac{C}{1+|v|}.
	\end{align*}
	Therefore, \eqref{k2mm} is proved for $m_i\neq m_j$.  Finally, if $m_i=m_j$, recalling \eqref{k2mimj}, we have that
	\begin{align*}
		\int_{\R^3}k^2_{ij}&(v,v_*)\frac{w_{i\be}(v)}{w_{j\be}(v_*)}e^\frac{\de|v-v_*|^2}{64}dv_*\notag\\
		 \leq&C\int_{\R^3}\frac{1}{|u|} \int_{(\R^3)^{\perp n}\times \R_+}\exp[-\frac{m_j}{4}\big( |V_\perp+\om'|^2+\frac{(|u|-2|v|\cos\tilde{\theta})^2}{4}+\frac{|u|^2}{4} \big)]\frac{1}{\widehat{E}_{ij}^{\eta/2}} d\om' dv_*\leq \frac{C}{1+|v|},
	\end{align*}
	where $\om'$, $V_\perp$ and $\cos\tilde{\theta}$ are defined in \eqref{k2velocity} and \eqref{notationsk3}.
\end{proof}
These bounds on $\int k^{1,2,3}(Z,Z_*)dZ_*$ lead to Lemma \ref{leKij}.
\begin{proof}[Proof of Lemma \ref{leKij}]
Estimate \eqref{boundk} is a direct consequence of Lemma \ref{leK} - Lemma \ref{leKmm}. Moreover, \eqref{reKij} holds from \eqref{rek1ij}, \eqref{Gk3} and \eqref{rek2ij}.
\end{proof}

\subsection{Estimates on collision frequency and nonlinear operator}
In this section, we establish the bounds on collision frequency $\nu_i$ and nonlinear operator $\Ga_i$, which correspond to Lemma \ref{lenu} and Lemma \ref{lebound} respectively.

\begin{proof}[Proof of Lemma \ref{lenu}]
	Recall the notations for $Z_i$ and $\CZ_i$ in \eqref{DefZ}
	and rewrite
	\begin{align}\label{renu}
		\nu_i(Z)=\sum^n_{j=1}\int_{\CZ_j\times \CZ_i\times \CZ_j} W_{ij}(Z,Z_*|Z',Z'_*)\frac{M_{j*}}{(I)^{\de_i/2-1}(I_*)^{\de_j/2-1}}dZ_*dZ^\prime dZ^\prime_*:=\sum^n_{j=1}\nu_{ij}(Z_i).
	\end{align}
	It follows from \eqref{DefW}, \eqref{si} and similar calculations as in \eqref{rek1} that
	\begin{align*}
		\nu_{ij}(Z_i)\leq&  C\int_{\CZ_j\times \CZ_i\times \CZ_j} \sqrt{|u|^2-\frac{2\De I}{\mu_{ij}}}\frac{(I^\prime )^{\de_i/2-1}(I^\prime_*)^{\de_j/2-1}}{E_{ij}^{(\eta+\de_i\chi_{\{i>p\}}+\de_j\chi_{\{j>p\}})/2}}\frac{1}{|u'|^2}M_{j*}\notag\\
		&\qquad\times\bm{\de}_3(V_{ij}-V'_{ij})\bm{\de}_1(\sqrt{|u|^2-\frac{2\De I}{\mu_{ij}}}-|u^\prime|)dZ_*dZ^\prime dZ^\prime_*,
	\end{align*}
	where $V_{ij}=\frac{m_i v+m_j v_*}{m_i+m_j}$, $V_{ij}^\prime=\frac{m_i v'+m_j v'_*}{m_i+m_j}$ and $u'=v'-v'_*.$ We discuss individually the integral \eqref{renu} in the four collisional interactions. For Poly-Poly collision where $i,j>p$, we define $\om=\frac{u^\prime}{|u^\prime|}$ and use \eqref{changeofvariables} to get
	\begin{align}\label{nupp}
		\nu_{ij}(Z_i)\leq& C\int_{\S^2\times (\R_+)^2\times \R^3\times \R_+}e^{-\frac{m_j}{2}|v_*|^2-I_*} (I_*)^{\de_j/2-1}\sqrt{|u|^2-\frac{2\De I}{\mu_{ij}}}\frac{(I^\prime )^{\de_i/2-1}(I^\prime_*)^{\de_j/2-1}}{E_{ij}^{(\eta+\de_i+\de_j)/2}}d\om  dI^\prime dI^\prime_*dv_*dI_*\notag\\
		\leq& C\int_{\R^3\times \R_+}e^{-\frac{m_j}{2}|v_*|^2-I_*} (I_*)^{\de_j/2-1}E^{1/2-(\eta+\de_i+\de_j)/2}_{ij}\notag\\
		&\qquad\times \int_{0\leq I^\prime\leq E_{ij}}(I^\prime )^{\de_i/2-1}dI^\prime \int_{0\leq I^\prime_*\leq E_{ij}}(I^\prime_*)^{\de_j/2-1}dI^\prime_* dv_*dI_*\notag\\
		\leq& C\int_{\R^3\times \R_+}e^{-\frac{m_j}{2}|v_*|^2-\frac{I_*}{2}} (|u|^2+I+I_*)^{(1-\eta)/2}dv_*dI_*.
	\end{align}
	By the fact $|v-v_*|^2+I+I_*\leq (1+|v_*|^2)(1+I_*)(1+|v|^2+I)$, it holds that
	\begin{align*}
		\nu_{ij}(Z_i)
		\leq& C(1+|v|^2+I)^{1-\eta}\int_{\R^3\times \R_+}e^{-\frac{m_j}{2}|v_*|^2-\frac{I_*}{2}} (1+|v_*|^2)^{1-\eta}(1+I_*)^{1-\eta}dv_*dI_*\leq C(1+|v|^2+I)^{(1-\eta)/2}.
	\end{align*}
	Similarly, for Mono-Poly collision one has that
	\begin{align}\label{nump}
		\nu_{ij}(Z_i)\leq& C\int_{\S^2\times \R_+\times \R^3\times \R_+}e^{-\frac{m_j}{2}|v_*|^2-I_*} (I_*)^{\de_j/2-1}\sqrt{|u|^2-\frac{2\De I}{\mu_{ij}}}\frac{(I^\prime_*)^{\de_j/2-1}}{E_{ij}^{(\eta+\de_j)/2}}d\om   dI^\prime_*dv_*dI_*\notag\\
		\leq& C\int_{\R^3\times \R_+}e^{-\frac{m_j}{2}|v_*|^2-I_*} (I_*)^{\de_j/2-1}E^{1/2-(\eta+\de_j)/2}_{ij} \int_{0\leq I^\prime\leq E_{ij}}(I^\prime_*)^{\de_j/2-1}dI^\prime_* dv_*dI_*\notag\\
		\leq& C\int_{\R^3\times \R_+}e^{-\frac{m_j}{2}|v_*|^2-\frac{I_*}{2}} (|u|^2+I_*)^{(1-\eta)/2}dv_*dI_*\notag\\
		\leq& C(1+|v|^2)^{(1-\eta)/2}.
	\end{align}
	Moreover, for Poly-Mono collision, it holds that
	\begin{align}\label{nupm}
		\nu_{ij}(Z_i)\leq& C\int_{\S^2\times \R_+\times \R^3}e^{-\frac{m_j}{2}|v_*|^2} \sqrt{|u|^2-\frac{2\De I}{\mu_{ij}}}\frac{(I^\prime )^{\de_i/2-1}}{E_{ij}^{(\eta+\de_i)/2}}d\om  dI^\prime dv_*\notag\\
		\leq& C\int_{\R^3}e^{-\frac{m_j}{2}|v_*|^2} E^{1/2-(\eta+\de_i)/2}_{ij}\int_{0\leq I^\prime\leq E_{ij}}(I^\prime )^{\de_i/2-1}dI^\prime dv_*\notag\\
		\leq& C\int_{\R^3}e^{-\frac{m_j}{2}|v_*|^2} (|u|^2+I)^{(1-\eta)/2}dv_*\notag\\
		\leq& C(1+|v|^2+I)^{(1-\eta)/2}.
	\end{align}
	For Mono-Mono collisions, it is direct that
	\begin{align}\label{numm}
		\nu_{ij}(Z_i)\leq& C\int_{\S^2\times \R^3}e^{-\frac{m_j}{2}|v_*|^2}|u|\frac{1}{|u|^{\eta}}d\om  dv_*
		\leq C(1+|v|^2)^{(1-\eta)/2}.
	\end{align}
	Hence, we collect \eqref{nupp}, \eqref{nump}, \eqref{nupm} and \eqref{numm} to obtain
	\begin{align}\label{nuupper}
		\nu_{ij}(Z_i)\leq C(1+|v|^2+I\chi_{\{i>p\}})^{(1-\eta)/2}.
	\end{align}
	The lower bound can be proved in a similar way. We still use \eqref{DefW} and \eqref{si} to get
	\begin{align*}
		\nu_{ij}(Z_i)\geq& c\int_{\CZ_j\times \CZ_i\times \CZ_j} \sqrt{|u|^2-\frac{2\De I}{\mu_{ij}}}\frac{(I^\prime )^{\de_i/2-1}(I^\prime_*)^{\de_j/2-1}}{E_{ij}^{(\eta+\de_i\chi_{\{i>p\}}+\de_j\chi_{\{j>p\}})/2}}\frac{1}{|u'|^2}M_{j*}\notag\\
		&\qquad\times\bm{\de}_3(V_{ij}-V'_{ij})\bm{\de}_1(\sqrt{|u|^2-\frac{2\De I}{\mu_{ij}}}-|u^\prime|)dZ_*dZ^\prime dZ^\prime_*,
	\end{align*}
	For Poly-Poly collision, by \eqref{changeofvariables} and $\chi_{\{I^\prime+I^\prime_*<E_{ij}\}}\geq \chi_{\{I^\prime<E_{ij}/4\}}\chi_{\{I^\prime_*<E_{ij}/4\}}$, one has
	\begin{align*}
		\nu_{ij}(Z_i)\geq& c\int_{\S^2\times (\R_+)^2\times \R^3\times \R_+}e^{-\frac{m_j}{2}|v_*|^2-I_*} (I_*)^{\de_j/2-1}\sqrt{|u|^2-\frac{2\De I}{\mu_{ij}}}\frac{(I^\prime )^{\de_i/2-1}(I^\prime_*)^{\de_j/2-1}}{E_{ij}^{(\eta+\de_i+\de_j)/2}}d\om  dI^\prime dI^\prime_*dv_*dI_*\notag\\
		\geq& c\int_{\R^3\times \R_+}e^{-\frac{m_j}{2}|v_*|^2-I_*} (I_*)^{\de_j/2-1}E^{1/2-(\eta+\de_i+\de_j)/2}_{ij}\notag\\
		&\qquad\times \int_{0\leq I^\prime\leq E_{ij}/4}(I^\prime )^{\de_i/2-1}dI^\prime \int_{0\leq I^\prime_*\leq E_{ij}/4}(I^\prime_*)^{\de_j/2-1}dI^\prime_* dv_*dI_*\notag\\
		\geq& c\int_{\R^3\times \R_+}e^{-\frac{m_j}{2}|v_*|^2-\frac{I_*}{2}} (|u|^2+I+I_*)^{(1-\eta)/2}dv_*dI_*.
	\end{align*}
	We further apply $$(|u|^2+I+I_*)^{(1-\eta)/2}\geq c(\big||v|-|v_*|\big|+\sqrt{I})^{1-\eta}$$ and $$1\geq \chi_{\{|v|\geq 1\}}\chi_{\{|v_*|\leq 1/2\}}+\chi_{\{|v|< 1\}}\chi_{\{|v_*|> 2\}}$$ to deduce that
	\begin{align}\label{nupplo}
		\nu_{ij}(Z_i)&\geq c\int_{\R^3} e^{-|v_*|^2/2}(\big||v|-|v_*|\big|+\sqrt{I})^{1-\eta}\left(\chi_{\{|v|\geq 1\}}\chi_{\{|v_*|\leq 1/2\}}+\chi_{\{|v|< 1\}}\chi_{\{|v_*|> 2\}}\right) dv_*\notag\\
		&\geq c\int_{\R^3} e^{-|v_*|^2/2}(|v|+\sqrt{I})^{1-\eta}dv_*+c\int_{\R^3}e^{-|v_*|^2/2}(1+\sqrt{I})^{1-\eta}\chi_{\{|v|< 1\}}dv_*\notag\\
		&\geq c(1+|v|^2+I)^{(1-\eta)/2}.
	\end{align}
	The other three types of collisions can be estimated in a very similar way that
	\begin{align*}
		\nu_{ij}(Z_i)&\geq c(1+|v|^2+I\chi_{\{i>p\}})^{(1-\eta)/2},
	\end{align*}
	which, together with \eqref{renu} and \eqref{nuupper}, gives \eqref{nu}.
\end{proof}

Next, we need to prove the $L^\infty_{v,I}$ bound for the nonlinear operator $\Ga_i$. We use the definition of $\Ga_i$ in \eqref{DefLGa} to write
\begin{align}\label{defGa+-}
	\Ga_i(f,f)
	=&\sum_{j=1}^{n}\int_{\CZ_j\times \CZ_i\times \CZ_j}W_{ij}(Z,Z_*|Z',Z'_*)\notag\\
	&\qquad\qquad\times\big((\frac{M_i^\prime M_{j*}^\prime}{M_i})^{1/2}\frac{f_i^\prime f_{j*}^\prime}{( I^\prime_*)^{\de_i/2-1}( I^\prime_*)^{\de_j/2-1}}-(M_{j*})^{1/2}\frac{f_if_{j*}}{(I)^{\de_i/2-1}( I_*)^{\de_j/2-1}}\big)dZ_*dZ^\prime dZ^\prime_*\notag\\
	:=&\sum_{j=1}^{n}(\Ga^+_{ij}(f,f)-\Ga^-_{ij}(f,f))\notag\\
	:=&\Ga^+_i(f,f)-\Ga^-_i(f,f).
\end{align}

\begin{lemma}\label{Gabound}
	For any $f_i=f_i(Z_i)$, it holds that
	\begin{align}
		\left|w_i\Ga^{\pm}_i(f,f) \right|&\leq C\nu_i\sum^n_{j=1}\|(w_jf_j)(t)\|^2_{L^\infty_{v,I}}.\label{LinftyGa}
	\end{align}
\end{lemma}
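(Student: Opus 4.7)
The plan is to treat $\Ga^+_i$ and $\Ga^-_i$ separately, in each case peeling off the $L^\infty$ norms of $w_jf_j$ and recognizing what remains as an integral of the same structure as $\nu_i$ in \eqref{renu}.

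For the loss term $\Ga^-_i$ the estimate is immediate from the representation in \eqref{defGa+-}. Bounding $|f_i|\le \|w_if_i\|_\infty/w_i$ and $|f_{j*}|\le \|w_jf_j\|_\infty/w_{j*}$ and factoring out of the integral gives
\begin{align*}
|w_i\Ga^-_i|\le \|w_if_i\|_\infty\sum_{j=1}^n\|w_jf_j\|_\infty\int_{\CZ_j\times\CZ_i\times\CZ_j}W_{ij}\,\frac{(M_{j*})^{1/2}}{w_{j*}(I)^{\de_i/2-1}(I_*)^{\de_j/2-1}}\,dZ_*dZ'dZ'_*.
\end{align*}
Since $(M_{j*})^{1/2}/w_{j*}$ still has Gaussian decay in $(v_*,I_*)$, the argument producing the upper bound in Lemma \ref{lenu} (i.e.\ integrating out the primes via the momentum and energy $\delta$'s using \eqref{changeofvariables}, integrating $I',I'_*$ over the Borgnakke--Larsen region, and finally integrating against the Gaussian in $v_*,I_*$) applies verbatim and yields $C\nu_i(Z)$. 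Concluding with $\|w_if_i\|_\infty\|w_jf_j\|_\infty\le \tfrac12\sum_k\|w_kf_k\|^2_\infty$ gives \eqref{LinftyGa} for $\Ga^-_i$.

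For the gain term $\Ga^+_i$, the first step is the algebraic rewriting based on \eqref{changeM}: since $M_iM_{j*}/[(I)^{\de_i/2-1}(I_*)^{\de_j/2-1}]=M'_iM'_{j*}/[(I')^{\de_i/2-1}(I'_*)^{\de_j/2-1}]$, taking square roots yields
\begin{align*}
\frac{(M'_iM'_{j*})^{1/2}}{M_i^{1/2}[(I')^{\de_i/2-1}(I'_*)^{\de_j/2-1}]}
=\frac{(M_{j*})^{1/2}}{[(I')^{\de_i/2-1}(I'_*)^{\de_j/2-1}]^{1/2}[(I)^{\de_i/2-1}(I_*)^{\de_j/2-1}]^{1/2}}.
\end{align*}
Then bounding $|f'_i||f'_{j*}|\le \|w_if_i\|_\infty\|w_jf_j\|_\infty/(w'_iw'_{j*})$ and using the weight comparison
\begin{align*}
w_i(Z)\le C\,w'_i(Z')\,w'_{j*}(Z'_*)\,w_{j*}(Z_*),
\end{align*}
which follows in all four collision types from momentum conservation $|v|\le |v'|+\tfrac{m_j}{m_i}(|v'_*|+|v_*|)$ together with energy conservation $\sqrt{I}\le C(|v'|+|v'_*|+\sqrt{I'}+\sqrt{I'_*})$ (and $w$ being polynomial), one arrives at
\begin{align*}
|w_i\Ga^+_i|\le C\|w_if_i\|_\infty\sum_{j=1}^n\|w_jf_j\|_\infty\int W_{ij}\,\frac{(M_{j*})^{1/2}w_{j*}}{[(I')^{\de_i/2-1}(I'_*)^{\de_j/2-1}]^{1/2}[(I)^{\de_i/2-1}(I_*)^{\de_j/2-1}]^{1/2}}\,dZ_*dZ'dZ'_*.
\end{align*}

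It remains to show the last integral is at most $C\nu_i$. Writing $W_{ij}$ explicitly from \eqref{DefW}, applying the cross-section upper bound \eqref{si}, and using \eqref{changeofvariables} together with the delta functions to eliminate $dv'dv'_*$, the integrand reduces to
$$\frac{[(I)(I_*)]^{1/2}[(I')(I'_*)]^{1/2}}{E_{ij}^{(\eta+\de_i\chi_{\{i>p\}}+\de_j\chi_{\{j>p\}})/2}}\sqrt{|u|^2-\tfrac{2\De I}{\mu_{ij}}}\,(M_{j*})^{1/2}w_{j*},$$
up to powers of the internal-energy variables (raised to half the exponents appearing in the $\nu_i$ integrand). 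Integration of $dI'dI'_*$ over the Borgnakke--Larsen region $I'+I'_*\le E_{ij}$ produces polynomial factors in $E_{ij}$ exactly as in \eqref{nupp}--\eqref{numm}, while the factor $(M_{j*})^{1/2}w_{j*}$ still enjoys Gaussian decay in $(v_*,I_*)$ and is integrable. Collecting, each of the four collision types leads to the bound $C(1+|v|^2+I\chi_{\{i>p\}})^{(1-\eta)/2}$, which by \eqref{nu} is comparable to $\nu_i$. Applying the same Cauchy inequality as before concludes the proof.

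The main technical obstacle is bookkeeping: the weight comparison $w_i\le Cw'_iw'_{j*}w_{j*}$ and the subsequent reduction of the gain-term integral to a $\nu_i$-type integral must be checked uniformly across the four interaction types, each carrying its own internal-energy measure and its own factorial structure in $W_{ij}$; but in each case the proof follows the same blueprint as Lemma \ref{lenu}.
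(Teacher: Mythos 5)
Your proposal is correct and follows essentially the same route as the paper's proof: split into $\Ga_i^\pm$, pull out the $L^\infty$ norms, recognize the loss integral directly as a $\nu_i$-type integral, and for the gain use the identity \eqref{changeM} (i.e.\ $M_iM_{j*}/[(I)^{\de_i/2-1}(I_*)^{\de_j/2-1}]=M'_iM'_{j*}/[(I')^{\de_i/2-1}(I'_*)^{\de_j/2-1}]$) together with a weight comparison to reduce to an unprimed Maxwellian, then integrate out $I',I'_*$ over the Borgnakke--Larsen region. Two small remarks on where you diverge from the paper, though neither affects correctness. First, your weight inequality $w_i(Z)\le C\,w'_i(Z')\,w'_{j*}(Z'_*)\,w_{j*}(Z_*)$ carries an extra factor $w_{j*}$ that the paper avoids: it derives $w_{i\be}(Z)\le Cw_{i\be}(Z')w_{j\be}(Z'_*)$ from the energy identity $m_i|v|^2+2I\le m_i|v'|^2+m_j|v'_*|^2+2I'+2I'_*$ alone, using energy conservation also for the $|v|$ part, whereas you use momentum conservation which forces $|v_*|$ in. Your extra $w_{j*}$ is harmless since $(M_{j*})^{1/2}w_{j*}$ still has Gaussian tails, but it is unnecessary. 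Second, your statement that integrating $dI'dI'_*$ ``produces polynomial factors in $E_{ij}$ exactly as in \eqref{nupp}--\eqref{numm}'' glosses over a real, if routine, extra step: after integrating out the primes the leftover powers of $I,I_*$ (which your bound carries at half the $\nu_i$-exponents, together with the residual $[(I)^{\de_i/2-1}(I_*)^{\de_j/2-1}]^{1/2}$) must themselves be absorbed into $E_{ij}$ via $(I)^\al(I_*)^\ga\le CE_{ij}^{\al+\ga}$, leaving only a surplus $(I_*)^{\de_j/4-1/2}$-type factor to be killed by the Gaussian; this is precisely what the paper does in the lines following \eqref{Ga+1}. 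Without that absorption the $E_{ij}$ exponent can be negative for large $\de_i,\de_j$ and the bound $C(1+|v|^2+I)^{(1-\eta)/2}$ would not be immediate. With that step made explicit, the argument is complete and matches the paper's.
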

\begin{proof}
Recalling $$V_{ij}=\frac{m_i v+m_j v_*}{m_i+m_j},\ V_{ij}^\prime=\frac{m_i v'+m_j v'_*}{m_i+m_j},\ u'=v'-v'_*,\ \De I=(I'-I)\chi_{\{i>p\}}+(I'_*-I_*)\chi_{\{j>p\}},$$ we use \eqref{DefW} to get
\begin{align*}
\Ga^-_{ij}(f,f)=&\int_{\CZ_j\times \CZ_i\times \CZ_j}W_{ij}(Z,Z_*|Z^\prime,Z^\prime_*)(M_{j*})^{1/2}\frac{f_if_{j*}}{(I)^{\de_i/2-1}(I_*)^{\de_j/2-1}}dv_*dv^\prime dv^\prime_* dI_*dI' dI^\prime_*\notag\\
= &C\int_{\CZ_j\times \CZ_i\times \CZ_j}(I_*)^{\de_j/4-1/2}e^{-\frac{m_j}{4}|v_*|^2-\frac{I_*}{2}} \si_{ij}{(|v-v_*|,\cos\theta,I,I_*,I^\prime,I^\prime_*)}\frac{|u|}{|u^\prime|^2}f_if_{j*}\notag\\
&\times\bm{\de}_3(V_{ij}-V'_{ij})\bm{\de}_1(\sqrt{|u|^2-\frac{2\De I}{\mu_{ij}}}-|u^\prime|)dZ_*dZ^\prime dZ^\prime_*,
\end{align*} 
and
\begin{align*}
	\Ga^+_{ij}(f,f)=&\int_{\CZ_j\times \CZ_i\times \CZ_j}W_{ij}(Z,Z_*|Z^\prime,Z^\prime_*)(\frac{M_i^\prime M_{j*}^\prime}{M_i})^{1/2}\frac{f_i^\prime f_{j*}^\prime}{( I^\prime_*)^{\de_i/2-1}( I^\prime_*)^{\de_j/2-1}}dv_*dv^\prime dv^\prime_* dI_*dI' dI^\prime_*\notag\\
	= &C\int_{\CZ_j\times \CZ_i\times \CZ_j}\frac{(I)^{\de_i/4-1/2}( I_*)^{\de_j/2-1}}{(I^\prime_*)^{\de_i/4-1/2}( I^\prime_*)^{\de_j/4-1/2}}e^{-\frac{m_j}{4}|v_*|^2-\frac{I_*}{2}} \si_{ij}{(|v-v_*|,\cos\theta,I,I_*,I^\prime,I^\prime_*)}\notag\\
	&\times\frac{|u|}{|u^\prime|^2}f_if_{j*}\bm{\de}_3(V_{ij}-V'_{ij})\bm{\de}_1(\sqrt{|u|^2-\frac{2\De I}{\mu_{ij}}}-|u^\prime|)dZ_*dZ^\prime dZ^\prime_*.
\end{align*} 
We consider $\Ga^\pm_{ij}$ individually for the four kinds of collisions. First for Poly-Poly collision, where $i,j>p$, apply the change of variables $(v^\prime,v^\prime_*)\rightarrow(u^\prime,V_{ij}^\prime)$ with \eqref{changeofvariables} and use \eqref{si} to obtain that
\begin{align}\label{Ga-1}
	\big|\Ga^-_{ij}(f,f)\big|\leq &C\int_{\S^2\times (\R_+)^2\times\R^3\times\R_+}(I_*)^{\de_j/4-1/2}e^{-\frac{m_j}{4}|v_*|^2-\frac{I_*}{2}} \si_{ij}{(|v-v_*|,\cos\theta,I,I_*,I^\prime,I^\prime_*)}|u||f_if_{j*}|d\om  dI^\prime dI^\prime_*dv_*dI_*\notag\\
	\leq &C\int_{\S^2\times (\R_+)^2\times\R^3\times\R_+}(I_*)^{\de_j/4-1/2}e^{-\frac{m_j}{4}|v_*|^2-\frac{I_*}{2}} \notag\\
	&\qquad\qquad\qquad\qquad\qquad\times\sqrt{|u|^2-\frac{2\De I}{\mu_{ij}}}\frac{(I^\prime )^{\de_i/2-1}(I^\prime_*)^{\de_j/2-1}}{E_{ij}^{(\eta+\de_i+\de_j)/2}}|f_if_{j*}|d\om  dI^\prime dI^\prime_*dv_*dI_*.
\end{align} 
Notice $E_{ij}=\frac{\mu_{ij}}{2}|u|^2+I+I_*=\frac{\mu_{ij}}{2}|u'|^2+I'+I'_*$ implies $I',I'_*\leq E_{ij}$. Then, we integrate in $I'$ and $I'_*$ to get that
\begin{align}\label{Ga-}
	|w_{i\be}\Ga^-_{ij}(f,f)|
	\leq &C\|w_{i\be}f_i\|_{L^\infty_{v,I}}\|w_{j\be}f_{j*}\|_{L^\infty_{v,I}}\int_{\R^3\times\R_+}(I_*)^{\de_j/4-1/2}e^{-\frac{m_j}{4}|v_*|^2-\frac{I_*}{2}}(|u|^2+I+I_*)^{(1-\eta)/2}dv_*dI_*\notag\\
	\leq &C(1+|v|^2+I)^{(1-\eta)/2}\|w_{i\be}f_i\|_{L^\infty_{v,I}}\|w_{j\be}f_{j*}\|_{L^\infty_{v,I}}.
\end{align} 
Similarly, we still apply the change of variables for $\Ga^+_{ij}$ to arrive at
\begin{align*}
	\big|\Ga^+_{ij}(f,f)\big|\leq &C\int_{\S^2\times (\R_+)^2\times\R^3\times\R_+}(I_*)^{\de_j/4-1/2}e^{-\frac{m_j}{4}|v_*|^2-\frac{I_*}{2}} \si_{ij}{(|v-v_*|,\cos\theta,I,I_*,I^\prime,I^\prime_*)}|u||f_if_{j*}|d\om  dI^\prime dI^\prime_*dv_*dI_*\notag\\
	\leq &C\int_{\S^2\times (\R_+)^2\times\R^3\times\R_+}(I)^{\de_i/4-1/2}( I_*)^{\de_j/2-1}(I^\prime_*)^{\de_i/4-1/2}( I^\prime_*)^{\de_j/4-1/2}e^{-\frac{m_j}{4}|v_*|^2-\frac{I_*}{2}}\notag\\
	&\qquad\qquad\times E^{1/2}_{ij}\frac{1}{E_{ij}^{(\eta+\de_i+\de_j)/2}}|f_if_{j*}|d\om  dI^\prime dI^\prime_*dv_*dI_*.
\end{align*} 
Integrate in $I',I'_*$ on $\{I',I'_*\leq E_{ij}\}$ to get that
\begin{align*}
	\big|\Ga^+_{ij}(f,f)\big|\leq &C\int_{\R^3\times\R_+}(I_*)^{\de_j/4-1/2}e^{-\frac{m_j}{4}|v_*|^2-\frac{I_*}{2}} \si_{ij}{(|v-v_*|,\cos\theta,I,I_*,I^\prime,I^\prime_*)}|u|f_if_{j*}d\om  dI^\prime dI^\prime_*dv_*dI_*\notag\\
	\leq &C\int_{\S^2\times (\R_+)^2\times\R^3\times\R_+}(I)^{\de_i/4-1/2}( I_*)^{\de_j/2-1}\notag\\
	&\qquad\qquad\qquad\qquad\times E_{ij}^{\de_i/4+1/2}E_{ij}^{\de_j/4+1/2}e^{-\frac{m_j}{4}|v_*|^2-\frac{I_*}{2}}E^{1/2}_{ij}\frac{1}{E_{ij}^{(\eta+\de_i+\de_j)/2}}|f_if_{j*}|dv_*dI_*.
\end{align*}
Using $(I)^{\de_i/4-1/2}(I_*)^{\de_j/4-1/2}\leq CE_{ij}^{\de_i/4+\de_j/4-1}$, one further gets that
\begin{align}\label{Ga+1}
	\big|\Ga^+_{ij}(f,f)\big|
	\leq &C\int_{\R^3\times\R_+}( I_*)^{\de_j/4-1/2}e^{-\frac{m_j}{4}|v_*|^2-\frac{I_*}{2}}(|u|^2+I+I_*)^{(1-\eta)/2}|f_if_{j*}|dv_*dI_*.
\end{align}
Combining the above inequality with the fact that
$$
w_{i\be}(v,I)\leq C(1+m_i|v|^2+2I)^{\frac{\be}{2}}\leq C(1+m_i|v^\prime|^2+2I^\prime+m_j|v^\prime_*|^2+2I^\prime_*)^\frac{\be}{2}\leq Cw_{i\be}(v^\prime,I^\prime)w_{j\be}(v^\prime_*,I^\prime_*),
$$
it holds that
\begin{align}\label{Ga+}
	|w_{i\be}\Ga^+_{ij}(f,f)|
	\leq &C\|w_{i\be}f_i\|_{L^\infty_{v,I}}\|w_{j\be}f_{j*}\|_{L^\infty_{v,I}}\int_{\R^3\times\R_+}(I_*)^{\de_j/4-1/2}e^{-\frac{m_j}{4}|v_*|^2-\frac{I_*}{2}}(|u|^2+I+I_*)^{(1-\eta)/2}dv_*dI_*\notag\\
	\leq &C(1+|v|^2+I)^{(1-\eta)/2}\|w_{i\be}f_i\|_{L^\infty_{v,I}}\|w_{j\be}f_{j*}\|_{L^\infty_{v,I}}.
\end{align} 
We collect \eqref{Ga-} and \eqref{Ga+} to conclude that
\begin{align}\label{Gapp}
	|w_{i\be}\Ga^{\pm}_{ij}(f,f)|\leq &C(1+|v|^2+I)^{(1-\eta)/2}\|w_{i\be}f_i\|_{L^\infty_{v,I}}\|w_{j\be}f_{j*}\|_{L^\infty_{v,I}}\leq C\nu_i(v,I)\sum^n_{j=1}\|(w_jf_j)(t)\|^2_{L^\infty_{v,I}}.
\end{align} 
The other cases can be calculated in a similar way. For Mono-Poly collision where $i\leq p$ and $j>p$, similar arguments as in \eqref{Ga-1}, \eqref{Ga-}, \eqref{Ga+1} and \eqref{Ga+} show that
\begin{align*}
	|w_{i\be}\Ga^-_{ij}(f,f)|\leq &C\|w_{i\be}f_i\|_{L^\infty_{v,I}}\|w_{j\be}f_{j*}\|_{L^\infty_{v,I}}\notag\\
	&\times\int_{\S^2\times \R_+\times\R^3\times\R_+}(I_*)^{\de_j/4-1/2}e^{-\frac{m_j}{4}|v_*|^2-\frac{I_*}{2}} \sqrt{|u|^2-\frac{2\De I}{\mu_{ij}}}\frac{(I^\prime_*)^{\de_j/2-1}}{E_{ij}^{(\eta+\de_j)/2}}d\om  dI^\prime_*dv_*dI_*\notag\\
	\leq &C\|w_{i\be}f_i\|_{L^\infty_{v,I}}\|w_{j\be}f_{j*}\|_{L^\infty_{v,I}}\int_{\R^3\times\R_+}(I_*)^{\de_j/4-1/2}e^{-\frac{m_j}{4}|v_*|^2-\frac{I_*}{2}}(|u|^2+I_*)^{(1-\eta)/2}dv_*dI_*\notag\\
	\leq &C(1+|v|^2)^{(1-\eta)/2}\|w_{i\be}f_i\|_{L^\infty_{v,I}}\|w_{j\be}f_{j*}\|_{L^\infty_{v,I}},
\end{align*} 
and
\begin{align*}
	|w_{i\be}\Ga^+_{ij}(f,f)|\leq &C\|w_{i\be}f_i\|_{L^\infty_{v,I}}\|w_{j\be}f_{j*}\|_{L^\infty_{v,I}}\notag\\
	&\times\int_{\S^2\times \R_+\times\R^3\times\R_+}( I_*)^{\de_j/2-1}E_{ij}^{\de_j/4+1/2}e^{-\frac{m_j}{4}|v_*|^2-\frac{I_*}{2}}E^{1/2}_{ij}\frac{1}{E_{ij}^{(\eta+\de_j)/2}}d\om  dI^\prime_*dv_*dI_*\notag\\
	\leq &C\|w_{i\be}f_i\|_{L^\infty_{v,I}}\|w_{j\be}f_{j*}\|_{L^\infty_{v,I}}\int_{\R^3\times\R_+}(I_*)^{\de_j/4-1/2}e^{-\frac{m_j}{4}|v_*|^2-\frac{I_*}{2}}(|u|^2+I_*)^{(1-\eta)/2}dv_*dI_*\notag\\
	\leq &C(1+|v|^2)^{(1-\eta)/2}\|w_{i\be}f_i\|_{L^\infty_{v,I}}\|w_{j\be}f_{j*}\|_{L^\infty_{v,I}}.
\end{align*}
It follows from the above two estimates that
\begin{align}\label{Gamp}
	|w_{i\be}\Ga^{\pm}_{ij}(f,f)|\leq &C(1+|v|^2)^{(1-\eta)/2}\|w_{i\be}f_i\|_{L^\infty_{v,I}}\|w_{j\be}f_{j*}\|_{L^\infty_{v,I}}\leq C\nu_i(v)\sum^n_{j=1}\|(w_jf_j)(t)\|^2_{L^\infty_{v,I}}.
\end{align} 
For Poly-Mono collision, as in \eqref{Ga-1}, \eqref{Ga-}, \eqref{Ga+1} and \eqref{Ga+}, one can deduce that
\begin{align*}
	|w_{i\be}\Ga^-_{ij}(f,f)|\leq &C\|w_{i\be}f_i\|_{L^\infty_{v,I}}\|w_{j\be}f_{j*}\|_{L^\infty_{v,I}}\int_{\S^2\times \R_+\times\R^3}e^{-\frac{m_j}{4}|v_*|^2} \sqrt{|u|^2-\frac{2\De I}{\mu_{ij}}}\frac{(I^\prime )^{\de_i/2-1}}{E_{ij}^{(\eta+\de_i)/2}}d\om  dI^\prime dv_*\notag\\
	\leq &C\|w_{i\be}f_i\|_{L^\infty_{v,I}}\|w_{j\be}f_{j*}\|_{L^\infty_{v,I}}\int_{\R^3}e^{-\frac{m_j}{4}|v_*|^2}(|u|^2+I)^{(1-\eta)/2}dv_*\notag\\
	\leq &C(1+|v|^2+I)^{(1-\eta)/2}\|w_{i\be}f_i\|_{L^\infty_{v,I}}\|w_{j\be}f_{j*}\|_{L^\infty_{v,I}},
\end{align*} 
and
\begin{align*}
	|w_{i\be}\Ga^+_{ij}(f,f)|\leq &C\|w_{i\be}f_i\|_{L^\infty_{v,I}}\|w_{j\be}f_{j*}\|_{L^\infty_{v,I}}\int_{\S^2\times \R_+\times\R^3}(I)^{\de_i/4-1/2}E_{ij}^{\de_i/4+1/2}e^{-\frac{m_j}{4}|v_*|^2}E^{1/2}_{ij}\frac{1}{E_{ij}^{(\eta+\de_i)/2}}dv_*\notag\\
	\leq &C\|w_{i\be}f_i\|_{L^\infty_{v,I}}\|w_{j\be}f_{j*}\|_{L^\infty_{v,I}}\int_{\R^3}e^{-\frac{m_j}{4}|v_*|^2}(|u|^2+I)^{(1-\eta)/2}dv_*\notag\\
	\leq &C(1+|v|^2+I)^{(1-\eta)/2}\|w_{i\be}f_i\|_{L^\infty_{v,I}}\|w_{j\be}f_{j*}\|_{L^\infty_{v,I}},
\end{align*}
which yield
\begin{align}\label{Gapm}
	|w_{i\be}\Ga^{\pm}_{ij}(f,f)|\leq &C(1+|v|^2+I)^{(1-\eta)/2}\|w_{i\be}f_i\|_{L^\infty_{v,I}}\|w_{j\be}f_{j*}\|_{L^\infty_{v,I}}\leq C\nu_i(v,I)\sum^n_{j=1}\|(w_jf_j)(t)\|^2_{L^\infty_{v,I}}.
\end{align} 
The Mono-Mono collision is more direct. We have
\begin{align*}
	|w_{i\be}\Ga^-_{ij}(f,f)|
	\leq &C\|w_{i\be}f_i\|_{L^\infty_{v,I}}\|w_{j\be}f_{j*}\|_{L^\infty_{v,I}}\int_{\R^3}e^{-\frac{m_j}{4}|v_*|^2}|u|^{1-\eta}dv_*\notag\\
	\leq &C(1+|v|^2)^{(1-\eta)/2}\|w_{i\be}f_i\|_{L^\infty_{v,I}}\|w_{j\be}f_{j*}\|_{L^\infty_{v,I}},
\end{align*} 
and
\begin{align*}
	|w_{i\be}\Ga^+_{ij}(f,f)|\leq &C\|w_{i\be}f_i\|_{L^\infty_{v,I}}\|w_{j\be}f_{j*}\|_{L^\infty_{v,I}}\int_{\S^2\times\R^3}e^{-\frac{m_j}{4}|v_*|^2}E^{1/2}_{ij}\frac{1}{E_{ij}^{\eta/2}}dv_*\notag\\
	\leq &C(1+|v|^2)^{(1-\eta)/2}\|w_{i\be}f_i\|_{L^\infty_{v,I}}\|w_{j\be}f_{j*}\|_{L^\infty_{v,I}},
\end{align*}
which together give that
\begin{align}\label{Gamm}
	|w_{i\be}\Ga^{\pm}_{ij}(f,f)|\leq &C(1+|v|^2)^{(1-\eta)/2}\|w_{i\be}f_i\|_{L^\infty_{v,I}}\|w_{j\be}f_{j*}\|_{L^\infty_{v,I}}\leq C\nu_i(v)\sum^n_{j=1}\|(w_jf_j)(t)\|^2_{L^\infty_{v,I}}.
\end{align} 
Hence, \eqref{LinftyGa} holds by \eqref{Gapp}, \eqref{Gamp}, \eqref{Gapm} and \eqref{Gamm}.
\end{proof}

The aforementioned lemma is the basis to prove Lemma \ref{lebound}.
\begin{proof}[Proof of Lemma \ref{lebound}]
For $f=f(t,x,v,I)$, the form of $\Ga_i$ in \eqref{defGa+-}, together with H\"older's inequality, shows that
\begin{align*}
	\|w_i\Ga_i(f,f)\|_{L^2_x}\leq w_i\Ga^+_i(\|f\|_{L^2_x},\|f\|_{L^\infty_x})+w_i\Ga^-_i(\|f\|_{L^2_x},\|f\|_{L^\infty_x}),
\end{align*}
and
\begin{align*}
	\|w_i\Ga_i(f,f)\|_{L^1_x}\leq w_i\Ga^+_i(\|f\|_{L^2_x},\|f\|_{L^2_x})+w_i\Ga^-_i(\|f\|_{L^2_x},\|f\|_{L^2_x}),
\end{align*}
which, combined with \eqref{LinftyGa}, yield \eqref{estGa}, \eqref{estGaL2} and \eqref{estGaL1}.
\end{proof}

\section{Linear $L^2$ Decay}\label{Section4}
In this section, we study the $L^2$ decay of solutions to the linear Boltzmann equation
\begin{eqnarray}\label{LBE}
	\pa_tf+v\cdot \na_x f+L f=0,   \quad &\dis f(t=0)=f_0.
\end{eqnarray}
Our approach is motivated by \cite{Duan} in the case of the classical Boltzmann equation in $\R^3$, see also \cite{Duan-Stain1,Guo}.  Thus, we carry out a Fourier transform based method in the gaseous mixture version to prove the decay estimates in both the microscopic and macroscopic parts corresponding to a hyperbolic-parabolic system.

To this end define the density, bulk velocity, and temperature, respectively, as
\begin{align*}
	a_i=&\left\{
	\begin{array}{rl}
		&\dis \int_{\R^3} f_i(t,x,v)dv,\quad i\leq p,
		\\[3mm]
		&\dis \int_{\R^3\times \R_+} f_i(t,x,v,I)dvdI,\quad i> p,
	\end{array} \right.\notag\\
b=&\sum_{i=1}^p\int_{\R^3} m_ivf_i(t,x,v)dv+\sum_{i=p+1}^n\int_{\R^3\times\R_+} m_ivf_i(t,x,v,I)dvdI=(b_{v_1},b_{v_2},b_{v_3}),\notag\\
c=&\sum_{i=1}^p\int_{\R^3} (m_i|v|^2-3)f_i(t,x,v)dv+\sum_{i=p+1}^n\int_{\R^3\times\R_+} (m_i|v|^2-3-I-\de_i)f_i(t,x,v,I)dvdI.
\end{align*}
In terms of the basis of $\ker L$ in \eqref{Defphi}, the macroscopic projection $P$ is given by
\begin{align*}
	Pf=\sum_{i=1}^n a_i\phi_i+b_{v_1} \phi_{n+1}+b_{v_2} \phi_{n+2}+b_{v_3} \phi_{n+3}+c\phi_{n+4}=\begin{pmatrix}
		P_1 f\\
		\vdots
		\\
		P_n f
\end{pmatrix}.
\end{align*}
Correspondingly, we denote
\begin{align*}
	(I-P)f=\begin{pmatrix}
		f_1-P_1 f\\
		\vdots
		\\
		f_n-P_n f
	\end{pmatrix}=\begin{pmatrix}
	(I-P)_1 f\\
	\vdots
	\\
	(I-P)_n f
\end{pmatrix}.
\end{align*}
From Lemma \ref{lenu} and Lemma \ref{leK}, the following proposition follows.
\begin{proposition}
	There exists a constant $\la_0>0$ depending on $\de_i$, $m_i$ and $\eta$ such that 
	\begin{align}\label{proco}
		(Lf,f)\geq C\la_0\|(I-P)f\|^2_\nu\geq \la_0\|(I-P)f\|_{L^2_{v,I}}^2,
	\end{align}	
where the weighted $L^2$ norm is defined by
\begin{align*}
\|(I-P)f\|^2_\nu=\sum_{i=1}^p\int_{\R^3}\nu_i(v) |f_i(t,x,v)|^2dv+\sum_{i=p+1}^n\int_{\R^3\times\R_+} \nu_i(v,I)|f_i(t,x,v,I)|^2dvdI.
\end{align*}
\end{proposition}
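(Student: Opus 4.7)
\medskip

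\noindent\textbf{Proof proposal for Proposition 2.4 (coercivity of $L$).}

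The plan is to establish the standard spectral gap for the mixture operator $L$ via three steps: self-adjoint/nonnegative structure, a weak $L^2_{v,I}$ coercivity obtained by contradiction using compactness of $K$, and an upgrade to the $\nu$-weighted norm. The second inequality in the statement is essentially free: since $\nu_i(v,I)\geq \nu_0>0$ by Lemma \ref{lenu}, one has $\|(I-P)f\|_\nu^2\geq \nu_0\|(I-P)f\|_{L^2_{v,I}}^2$, so $C\la_0\|(I-P)f\|_\nu^2\geq \la_0\|(I-P)f\|_{L^2_{v,I}}^2$ after absorbing $\nu_0$ into the constant. All the content is in the first inequality.

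First I would use the symmetry relations \eqref{ProW1}--\eqref{ProW3} together with the change of variables $(Z,Z_*)\leftrightarrow (Z',Z'_*)$ and $Z\leftrightarrow Z_*$ to rewrite
\begin{align*}
(Lf,f)=\tfrac{1}{4}\sum_{i,j=1}^n\int W_{ij}(Z,Z_*|Z',Z'_*)M_{j*}\Big(\tfrac{f_i}{\sqrt{M_i}}+\tfrac{f_{j*}}{\sqrt{M_{j*}}}-\tfrac{f_i'}{\sqrt{M_i'}}-\tfrac{f_{j*}'}{\sqrt{M_{j*}'}}\Big)^2\frac{dZdZ_*dZ'dZ'_*}{\text{(appropriate $I$-weights)}},
\end{align*}
which yields self-adjointness, $(Lf,f)\geq 0$, and the characterization $\ker L=\mathrm{span}\{\phi_1,\dots,\phi_{n+4}\}$ via the classical collision-invariant argument (the bracket vanishes identically only when $f/\sqrt{M}$ is a linear combination of $1$, $m_iv$, and $m_i|v|^2+2I\chi_{\{i>p\}}$, which is exactly the basis displayed in \eqref{Defphi}). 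Since $P$ is the $L^2_{v,I}$-projection onto $\ker L$, we may henceforth replace $f$ by $(I-P)f$ without changing $(Lf,f)$.

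Next I would establish weak coercivity, namely the existence of $c_0>0$ with $(Lf,f)\geq c_0\|(I-P)f\|_{L^2_{v,I}}^2$, by contradiction. Assume there is a sequence $g_k=(I-P)f_k$ with $\|g_k\|_{L^2_{v,I}}=1$ and $(Lg_k,g_k)\to 0$. Lemma \ref{leKij}, combined with the pointwise decay of $k^{1,2,3}_{ij}$ from Lemmas \ref{leK}--\ref{leKmm} (which imply the $k^{1,2,3}_{ij}$ are Hilbert--Schmidt kernels on $\CZ_i\times\CZ_j$ since the decay $(1+|v|^{1/2}+I^{1/8})^{-1}$ in $v,I$ combined with the $L^1$-in-$Z_*$ bound yields $L^2$ integrability of each kernel), shows $K$ is a compact operator on $L^2_{v,I}$. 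Hence along a subsequence $g_k\rightharpoonup g$ weakly and $Kg_k\to Kg$ strongly. Writing $(Lg_k,g_k)=\|g_k\|_\nu^2-(Kg_k,g_k)$ and using $\|g_k\|_\nu^2\geq \nu_0$, lower semicontinuity of the seminorm, and the quadratic form representation above, one concludes $g\in\ker L$, so $g=0$; but then $\|g_k\|_{L^2_{v,I}}^2\leq (Kg_k,g_k)+(Lg_k,g_k)\to 0$, contradicting $\|g_k\|_{L^2_{v,I}}=1$.

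Finally I would upgrade to the $\nu$-norm by convex combination. Writing $L=\nu-K$ and applying Cauchy--Schwarz together with Lemma \ref{lenu} and the $L^1$-in-$Z_*$ bound of Lemma \ref{leKij}, one obtains $(Lf,f)\geq \|(I-P)f\|_\nu^2-C_1\|(I-P)f\|_{L^2_{v,I}}^2$. Combining this with the weak coercivity via $(Lf,f)=\theta (Lf,f)+(1-\theta)(Lf,f)$ and choosing $\theta\in(0,1)$ so that $\theta c_0\geq (1-\theta)C_1$, one arrives at $(Lf,f)\geq (1-\theta)\|(I-P)f\|_\nu^2$, which is the desired bound with $C\la_0=1-\theta$. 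The main technical obstacle is ensuring that the kernel estimates from Lemmas \ref{leK}--\ref{leKmm} suffice to justify the compactness of $K:L^2_{v,I}\to L^2_{v,I}$ in the polyatomic-mixture setting, since the internal-energy variable and dissimilar masses make a straightforward Hilbert--Schmidt estimate delicate near $I,I_*\to 0$; one handles this by an approximation argument cutting off $\{I\leq\eps\}\cup\{I_*\leq\eps\}\cup\{|v|+|v_*|\geq R\}$ and using the pointwise decay $(1+|v|^{1/2}+I^{1/8})^{-1}$ to absorb the tails uniformly.
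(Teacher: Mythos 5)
Your overall architecture (symmetrized Dirichlet form and identification of $\ker L$, weak $L^2_{v,I}$ coercivity by a compactness--contradiction argument, then upgrade to the $\nu$-weighted norm by a convex combination with $L=\nu-K$ and the $O(1)$ bound on $(Kf,f)$) is the standard and correct route, and it is consistent with what the paper does: the paper gives no written proof, stating only that the proposition ``follows from Lemma \ref{lenu} and Lemma \ref{leK}'', i.e.\ it implicitly relies on the collision-frequency bounds plus compactness-type information on $K$ of exactly the kind you are trying to supply (cf.\ the compactness results for this mixture setting in \cite{Bernhoff,BST,Shahine} that the paper cites).

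The genuine gap is in your justification of compactness of $K$. The claim that Lemmas \ref{leK}--\ref{leKmm} make the $k^{1,2,3}_{ij}$ Hilbert--Schmidt does not follow: those lemmas only give weighted $L^1$-in-$Z_*$ (Schur-type) bounds, uniform in $Z$, and ``$L^1$ in $Z_*$ plus decay in $Z$'' does not yield $\int\!\!\int |k|^2\,dZ\,dZ_*<\infty$ for unbounded kernels. Worse, the pointwise bounds in the proofs show the claim is false in general: e.g.\ for Poly--Poly and Mono--Poly interactions one has $k^1_{ij}\lesssim e^{-c(|v|^2+|v_*|^2+I+I_*)}(I_*)^{\de_j/4-1}$, and for $\de_j=2$ the square $(I_*)^{-1}$ is not integrable near $I_*=0$, so $k^1_{ij}$ is not Hilbert--Schmidt even after cutting off large $(v,I)$. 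Your fallback cutoff is also misdirected: removing $\{I\leq\eps\}\cup\{I_*\leq\eps\}$ cannot be compensated by the decay $(1+|v|^{1/2}+I^{1/8})^{-1}$, which gives smallness only for \emph{large} $|v|$ or $I$ and says nothing on the small-internal-energy region. What actually closes the argument is either (i) a Grad-type splitting $k=k_N+(k-k_N)$ where $k_N$ is smooth and compactly supported (so Hilbert--Schmidt) and the remainder has small \emph{Schur} norm, with the smallness on $\{I_*\leq\eps\}$ coming from the integrability of the singularity ($\int_0^\eps (I_*)^{\de_j/4-1}dI_*\lesssim \eps^{\de_j/4}$, combined with the symmetric bound $\sup_{Z_*}\int|k|\,dZ\leq C$, giving operator norm $\lesssim\eps^{\de_j/8}$), on $\{|v-v_*|\ \text{or}\ |m_iv-m_jv_*|\geq N\}$ from the exponential weights, and on large $|v|,I,I_*$ from the stated decay --- this requires quoting the pointwise kernel bounds inside the proofs, not just the integrated statements of the lemmas; or (ii) directly invoking the compactness of $K$ for monatomic--polyatomic mixtures established in the cited literature. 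With the compactness step repaired in one of these two ways, the remainder of your proof (the contradiction argument, $g\perp\ker L$ passing to the weak limit, and the $\theta$-interpolation to the $\nu$-norm) is sound.
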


The following lemma is important to bound the $L^2$ norm of $Lf$.
\begin{lemma}\label{L2K}
	Let $K_{ij}$ be defined in \eqref{DefKij}. There exists a constant $C$ depending on $\de_i$, $m_i$ and $\eta$ such that
	\begin{align*}
		\|K_{ij}f(t,x)\|_{L^2_{v,I}}\leq C\|f(t,x)\|_{L^2_{v,I}},
	\end{align*}
	for $1\leq i,j\leq n.$
\end{lemma}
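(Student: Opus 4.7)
The plan is to bound each of the three pieces $K^1_{ij}, K^2_{ij}, K^3_{ij}$ from the decomposition \eqref{reKij} via Schur's test (a Cauchy--Schwarz argument). Applying Lemma \ref{leKij} with $\beta = 0$ (so $w_{i\beta} \equiv 1$) immediately gives one half of Schur's test:
\begin{equation*}
\sup_{Z} \int |k^{\alpha}_{ij}(Z, Z_*)|\, dZ_* \leq C, \qquad \alpha = 1, 2, 3.
\end{equation*}
The task that remains is to establish the complementary bound
\begin{equation*}
\sup_{Z_*} \int |k^{\alpha}_{ij}(Z, Z_*)|\, dZ \leq C.
\end{equation*}
Granting both, for $\alpha \in \{1,2\}$ one uses Cauchy--Schwarz to write
\begin{equation*}
|K^\alpha_{ij} f(Z)|^{2} \leq \Bigl(\int |k^\alpha_{ij}(Z, Z_*)|\, dZ_*\Bigr)\Bigl(\int |k^\alpha_{ij}(Z, Z_*)|\, |f_j(Z_*)|^{2}\, dZ_*\Bigr),
\end{equation*}
and the analogous inequality holds for $\alpha = 3$ with $f_j$ replaced by $f_i$. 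Integrating in $Z$ and applying Fubini then yields $\|K^\alpha_{ij} f\|^{2}_{L^2_{v,I}} \leq C \|f_\sharp\|^{2}_{L^2_{v,I}}$, and summing the three contributions gives the lemma.

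The complementary bound is settled case by case. For $k^{3}_{ij}$, the representation \eqref{Gk3} together with the symmetries \eqref{ProW1}--\eqref{ProW2} of $W_{ij}$ yields, after relabeling $Z' \leftrightarrow Z'_*$ in the integration, $k^{3}_{ij}(Z, Z_*) = k^{3}_{ij}(Z_*, Z)$ (recall that for this kernel both arguments live in $\CZ_i$). Thus the complementary bound reduces to the one already proved. For $k^{1}_{ij}$ and $k^{2}_{ij}$, I would revisit the proofs of Lemmas \ref{leK}--\ref{leKmm}: the pointwise bounds derived there (see e.g.\ \eqref{Gk1}, \eqref{boundk2}) carry exponential decay in \emph{both} $|v|$ and $|v_*|$ (after absorption by the Maxwellians of the two species), while the polynomial-in-$(I, I', I_*, I'_*)$ factors are controlled via the auxiliary exponents $\psi_1, \dots, \psi_4$ in $\Psi$ of \eqref{DefPsi}. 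Choosing these exponents so as to pull $(I)^{\delta_i/4 - 1/2}$ out of $E_{ij}^{(\delta_i+\delta_j)/2}$ instead of $(I_*)^{\delta_j/4 - 1/2}$ (and analogously in the hatted energy $\widehat E_{ij}$ for $k^{2}_{ij}$) produces a bound whose integral in $(v, I)$ is uniformly finite in $(v_*, I_*)$.

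The main obstacle is bookkeeping in the $k^{2}_{ij}$ estimate: the proof of Lemma \ref{leK} treats it through six cases distinguished by the relative sizes of $|v|$, $I$, and $I_*$, and one must verify the dual integrability by parallel case distinctions with the roles of $(v, I)$ and $(v_*, I_*)$ interchanged. The near-symmetry of the collisional formula \eqref{Wk2} and the availability of the two alternative representations \eqref{rek21}--\eqref{rek22} make this routine but somewhat lengthy; crucially, no new ideas beyond those already developed in Section \ref{Section3} are required.
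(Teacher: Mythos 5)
Your Schur-test framework and the Cauchy--Schwarz step are exactly what the paper does, and you correctly identify the symmetry $k^3_{ij}(Z,Z_*)=k^3_{ij}(Z_*,Z)$. The one place you take a longer road than the paper is the dual bound $\sup_{Z_*}\int |k^\alpha_{ij}(Z,Z_*)|\,dZ\le C$ for $\alpha=1,2$: you propose to re-run the case analysis of Lemma~\ref{leK} with the roles of $(v,I)$ and $(v_*,I_*)$ interchanged and $\Psi$ re-balanced. That would in fact work (the Maxwellian decay in \eqref{Gk1} and \eqref{boundk2} is present in both variables, and since $\de_i,\de_j\ge 2$ the polynomial singularities remain integrable after redistributing the exponents of $E_{ij}$), but it is quite a lot of extra bookkeeping. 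The paper instead derives, directly from the kernel symmetries \eqref{ProW1}--\eqref{ProW2} and a relabeling of the dummy variables $Z'\leftrightarrow Z'_*$, the cleaner identities $k^1_{ij}(Z,Z_*)=k^1_{ji}(Z_*,Z)$ and $k^2_{ij}(Z,Z_*)=k^2_{ji}(Z_*,Z)$, so that the dual Schur bound is literally the forward bound already established for the $(j,i)$ kernel in Lemmas~\ref{leK}--\ref{leKmm} (with $\beta=0$). In short: your plan is sound and would close the proof, but the near-symmetry you flag as the ``main obstacle'' is in fact an exact symmetry (up to swapping the species indices), which eliminates the need to redo the six-case estimate for $k^2_{ij}$ entirely.
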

\begin{proof}
	Recall the general form of $K_{ij}$ in \eqref{reKij} and the derivation of $k^{1,2,3}_{ij}$ in \eqref{rek1ij}, \eqref{rek2ij} and \eqref{Gk3}. We use \eqref{ProW1} and rename $Z'$ and $Z'_*$ to get
	\begin{align*}
		k^1_{ij}(Z,Z_*)&=\int_{\CZ_i\times \CZ_j}\frac{1}{(M_iM_{j*})^{1/2}} \frac{ (M'_iM'_{j*})^{1/2}}{(II')^{\de_i/4-1/2}(I_*I'_*)^{\de_j/4-1/2}}W_{ij}(Z,Z_*|Z',Z'_*)dZ^\prime dZ^\prime_*\notag\\
		&=\int_{\CZ_i\times \CZ_j}\frac{1}{(M_iM_*)^{1/2}} \frac{ (M_iM_{j*}M'_iM'_{j*})^{1/2}}{(II')^{\de_i/4-1/2}(I_*I'_*)^{\de_j/4-1/2}}W_{ji}(Z_*,Z|Z',Z'_*)dZ^\prime dZ^\prime_*=k^1_{ji}(Z_*,Z).
	\end{align*}
	It holds by \eqref{ProW1}, \eqref{ProW2} and exchanging $Z'$ and $Z'_*$ that
	\begin{align*}
		k^2_{ij}(Z,Z_*)&=\int_{\CZ_i\times \CZ_j}\frac{1}{(M_iM_{j*})^{1/2}} \frac{ (M'_iM'_{j*})^{1/2}}{(II')^{\de_i/4-1/2}(I_*I'_*)^{\de_j/4-1/2}}W_{ij}(Z,Z'_*|Z',Z_*)dZ^\prime dZ^\prime_*\notag\\
		&=\int_{\CZ_i\times \CZ_j}\frac{1}{(M_iM_*)^{1/2}} \frac{ (M_iM_{j*}M'_iM'_{j*})^{1/2}}{(II')^{\de_i/4-1/2}(I_*I'_*)^{\de_j/4-1/2}}W_{ji}(Z'_*,Z|Z_*,Z')dZ^\prime dZ^\prime_*\notag\\
		&=\int_{\CZ_i\times \CZ_j}\frac{1}{(M_iM_*)^{1/2}} \frac{ (M_iM_{j*}M'_iM'_{j*})^{1/2}}{(II')^{\de_i/4-1/2}(I_*I'_*)^{\de_j/4-1/2}}W_{ji}(Z_*,Z'|Z'_*,Z)dZ^\prime dZ^\prime_*\notag\\
		&=\int_{\CZ_i\times \CZ_j}\frac{1}{(M_iM_*)^{1/2}} \frac{ (M_iM_{j*}M'_iM'_{j*})^{1/2}}{(II')^{\de_i/4-1/2}(I_*I'_*)^{\de_j/4-1/2}}W_{ji}(Z_*,Z'_*|Z',Z)dZ^\prime dZ^\prime_*=k^2_{ji}(Z_*,Z).
	\end{align*}
	For $k^3_{ij}$, noting that $Z,Z_*\in \CZ_i,$ one has from \eqref{ProW2} and renaming $Z'$ and $Z'_*$ that
	\begin{align*}
		k^3_{ij}(Z,Z_{*})&=\int_{\CZ_j\times \CZ_j}\frac{1}{(M_iM_{j*})^{1/2}} \frac{ (M'_iM'_{j*})^{1/2}}{(II')^{\de_i/4-1/2}(I_*I'_*)^{\de_j/4-1/2}}W_{ij}(Z,Z'|Z_*,Z'_*)dZ^\prime dZ^\prime_*\notag\\
		&=\int_{\CZ_j\times \CZ_j}\frac{1}{(M_iM_*)^{1/2}} \frac{ (M_iM_{j*}M'_iM'_{j*})^{1/2}}{(II')^{\de_i/4-1/2}(I_*I'_*)^{\de_j/4-1/2}}W_{ji}(Z_*,Z'|Z,Z'_*)dZ^\prime dZ^\prime_*=k^3_{ij}(Z_{*},Z).
	\end{align*}
For $i,j\leq p$ or $i,j>p$ which both indicate $\CZ_i=\CZ_j$, denoting $k(Z,Z_*)=k^2_{ij}(Z,Z_*)+k^3_{ij}(Z,Z_*)-k^1_{ij}(Z,Z_*),$ and using the symmetric properties deduced above, we have from \eqref{kmm}, \eqref{k2mm}, \eqref{k3mm} for Mono-Mono collisions and \eqref{kpp}, \eqref{k2pp}, \eqref{k3pp} for Poly-Poly collisions that
	\begin{align*}
		\int_{\CZ_i}|K_{ij}f(t,x,Z)|^2dZ&=\int_{\CZ_i}\left|\int_{\CZ_i}k(Z,Z_*)f(t,x,Z_*)dZ_*\right|^2dZ\notag\\
		&\leq C\int_{\CZ_i}\left(\int_{\CZ_i}k(Z,Z_*)dZ_*\right)\left(\int_{\CZ_i}k(Z,Z_*)\left|f(t,x,Z_*)\right|^2dZ_*\right)dvdI\notag\\
		&\leq C\int_{\CZ_i\times \CZ_i}k(Z,Z_*)\left|f(t,x,Z_*)\right|^2dZ_*dZ\notag\\
		&\leq C\|f(t,x)\|_{L^2_{v,I}}^2.
	\end{align*}
	All other cases are similarly deduced by using the above symmetric relations.
\end{proof}

In one hand, denoting the Fourier transform in $x$ of $f$ by $\hat{f}$, it is direct from \eqref{LBE} and \eqref{proco} that
\begin{align}\label{co}
	\frac{1}{2}\pa_t\|\hat{f}(t)\|^2_{L^2_{k,v,I}}+\la_0\|(I-P)\hat{f}(t)\|_{L^2_{k,v,I}}^2\leq0, 
\end{align}
for $\la_0>0$ given in \eqref{proco}. 

In the other hand, substituting $f=Pf+(I-P)f$ into \eqref{LBE}, we obtain that
\begin{align*}
	\pa_tP_if+v\cdot \na_x P_if+L(I-P)_if=-\pa_t(I-P)_if-v\cdot \na_x (I-P)_if.
\end{align*}
Taking inner product with $$\sqrt{M_i},\ v\sqrt{M_i},\ (m_i|v|^2-3)\sqrt{M_i}$$ for $i\leq p$ and $$(m_i|v|^2-3-2I-\de_i)\sqrt{M_i}$$ for $i> p$, one has that
\begin{align}\label{eqabc}
	\begin{cases}
	&\dis c_i\pa_ta_i+c_i\na_x\cdot b=0,\\[1mm]
	&\dis c_i\pa_t b+\frac{c_i}{m_i}\na_x a_i+\frac{2c_i}{m_i}\na_x c=-(v\cdot \na_x (I-P)_if,v\sqrt{M_i})\\[1mm]
	&\dis 6c_i\pa_t c+2c_i\na_x \cdot b=-(v\cdot \na_x (I-P)_if,(m_i|v|^2-3)\sqrt{M_i}),\quad i\leq p\\[1mm]
	&\dis (6+2\de_i)c_i\pa_t c+2c_i\na_x \cdot b=-(v\cdot \na_x (I-P)_if,(m_i|v|^2-3-2I-\de_i)\sqrt{M_i}),\quad i> p.
	\end{cases}
\end{align} 
Furthermore, we take inner product with $m_iv_jv_l\sqrt{M_i}$ and $m_iv|v|^2\sqrt{M_i}$ with $j\neq l$ to get
\begin{align}\label{eqhighmo}
	\begin{cases}
		&\dis c_i\pa_{x_j}b_{v_l}+c_i\pa_{x_l}b_{v_j}=-((\pa_t+v\cdot \na_x) (I-P)_if-L_i(I-P)f,m_iv_jv_l\sqrt{M_i}),\quad j\neq l,\\[1mm]
	&\dis 5c_i\pa_t b+c_i\na_x(\frac{5}{m_i}a+\frac{20}{m_i}c)=-((\pa_t+v\cdot \na_x) (I-P)_if-L_i(I-P)f,m_iv|v|^2\sqrt{M_i}).
\end{cases}
\end{align}
Using the momentum equations deduced above, we now prove Proposition \ref{leL2}, which provides the $L^2$ decay for the linearized equation.
\begin{proof}[Proof of Proposition \ref{leL2}]
	Let $\hat{a}$, $\hat{b}=(\hat{b}_{v_1},\hat{b}_{v_2},\hat{b}_{v_3})$, $\hat{c}$ and $\hat{f}$ denote the Fourier transform in $x$ of $a,b,c$ and $f$ respectively. Take Fourier transform on \eqref{eqabc} and \eqref{eqhighmo} to obtain the system in terms of $\hat{a}$, $\hat{b}$, $\hat{c}$ and $\hat{f}$
	\begin{align}\label{abcf}
		\begin{cases}
			&\dis c_i\pa_t\hat{a}_i+c_i \rmi k\cdot \hat{b}=0,\\[1mm]
			&\dis c_i\pa_t \hat{b}+\frac{c_i}{m_i}\rmi k \hat{a}_i+\frac{2c_i}{m_i}\rmi k \hat{c}=-(\rmi k\cdot v (I-P)_i\hat{f},v\sqrt{M_i})\\[1mm]
			&\dis 6c_i\pa_t \hat{c}+2c_i\rmi k\cdot \hat{b}=-(\rmi k\cdot v (I-P)_i\hat{f},(m_i|v|^2-3)\sqrt{M_i}),\quad i\leq p\\[1mm]
			&\dis (6+2\de_i)c_i\pa_t \hat{c}+2c_i\rmi k\cdot \hat{b}=-(\rmi k\cdot v (I-P)_i\hat{f},(m_i|v|^2-3-2I-\de_i)\sqrt{M_i}),\quad i> p\\[1mm]
			&\dis c_i\rmi k_j \hat{b}_{v_l}+c_i\rmi k_l\hat{b}_{v_j}=-((\pa_t+\rmi k\cdot v) (I-P)_i\hat{f}-L_i(I-P)\hat{f},m_iv_jv_l\sqrt{M_i}),\quad j\neq l,\\[1mm]
			&\dis \frac{10c_i}{m_i}\rmi k\hat{c}=-((\pa_t+\rmi k\cdot v) (I-P)_i\hat{f}-L_i(I-P)\hat{f},m_iv|v|^2\sqrt{M_i}-5v\sqrt{M_i}),
		\end{cases}
	\end{align}
	where $k\in \Z^3$ for $x\in \T^3$, and $k\in \R^3$ since $x\in \R^3$, and $1\leq i\leq n$. For convenience, we denote $p_{jli}(v,I)=\frac{1}{c_i}m_iv_iv_j\sqrt{M_i}$, $p_i(v,I)=\frac{m_i}{c_i}(m_i|v|^2-5)v\sqrt{M_i}.$ From the sixth equation in \eqref{abcf}, we integrate by parts to get that
	\begin{align}\label{c1}
		10|k|^2|\hat{c}|^2&=\langle\rmi k\hat{c},10\rmi k\hat{c}\rangle\notag\\
		&=-\langle\rmi k\hat{c},\pa_t ((I-P)_i\hat{f},p_i(v,I))\rangle-\langle\rmi k\hat{c},-(\mathrm{i}k\cdot v(I-P)_i\hat{f},p_i(v,I))-(L_i(I-P)\hat{f},p_i(v,I))\rangle\notag\\
		&=-\pa_{t}\langle\rmi k\hat{c}, ((I-P)_i\hat{f},p_i(v,I))\rangle+\langle\rmi k\pa_t\hat{c}, ((I-P)_i\hat{f},p_i(v,I))\rangle\notag\\
		&\qquad-\left\{\langle\rmi k\hat{c},-(\mathrm{i}k\cdot v(I-P)_i\hat{f},p_i(v,I))-(L_i(I-P)\hat{f},p_i(v,I))\rangle\right\}.
	\end{align}
	It holds by Cauchy-Schwarz inequality that
	\begin{align}\label{c2}
		\left|\langle\rmi k\hat{c},-(\mathrm{i}k\cdot v(I-P)_i\hat{f},p_i(v,I))\rangle\right|\leq \ep|k|^2|\hat{c}|^2+\frac{C}{\ep}(1+|k|^2)\|(I-P)\hat{f}\|_{L^2_{v,I}}^2,
	\end{align} 
	where $\eps>0$ will be chosen later and $C>0$ depends only on $\de_i$, $m_i$ and $\eta$.
	Since $p_i$ exponentially decays in $v$ for $i\leq p$ and in both $v$ and $I$ for $i> p$, one has from Lemma \ref{L2K} that
	\begin{align}\label{c3}
		\left|\langle\rmi k\hat{c},-(L_i(I-P)\hat{f},p_i(v,I))\rangle \right|\leq \ep|k|^2|\hat{c}|^2+\frac{C}{\ep}(1+|k|^2)\|(I-P)\hat{f}\|_{L^2_{v,I}}^2,
	\end{align} 
	where $C>0$ depends on $\de_i$, $m_i$ and $\eta$.
	Then, we use the third equation in \eqref{abcf} to get
	\begin{align*}
		&\langle\rmi k\pa_t\hat{c}, ((I-P)_i\hat{f},p_i(v,I))\rangle
		=-\langle\rmi k\big\{\frac{1}{3}\mathrm{i}k\cdot \hat{b}+\frac{1}{6}\mathrm{i}k\cdot ((I-P)_i\hat{f},\frac{(m_i|v|^2-3)}{c_i}\sqrt{M_i})\big\},((I-P)_i\hat{f},p_i(v,I))\rangle,	
	\end{align*}
	for $i\leq p$.
	A direct application of Cauchy-Schwarz inequality yields
	\begin{align}\label{c4}
		|\langle\rmi k_i\pa_t\hat{c}, ((I-P)_i\hat{f},p_i(v,I))\rangle|\leq \ep_1|k\cdot \hat{b}|^2+\frac{C}{\ep_1}(1+|k|^2)\|(I-P)\hat{f}\|_{L^2_{v,I}}^2,
	\end{align}
	for $i\leq p$, where $\eps_1>0$ will be chosen later and $C>0$ depends only on $\de_i$, $m_i$ and $\eta$. Similarly, \eqref{c4} holds for $i> p$ by the fourth equation in \eqref{abcf}. It following from \eqref{c1}, \eqref{c2}, \eqref{c3}, \eqref{c4} and choosing $\ep$ to be small enough such that 
	\begin{align}\label{estc}
		\pa_{t}\rmre\sum^3_{i=1}\langle\rmi k\hat{c}, ((I-P)_i\hat{f},p_i(v,I))\rangle+\la|k|^2|\hat{c}|^2 \leq \ep_1|k\cdot \hat{b}|^2+\frac{C}{\ep_1}(1+|k|^2)\|(I-P)\hat{f}\|_{L^2_{v,I}}^2,
	\end{align}
	where $\la>0$ is a generic constant and $C>0$ depends on $\de_i$, $m_i$ and $\eta$.
	For the estimate of $\hat{b}$, we use the fifth equation in \eqref{abcf} to obtain
	\begin{align}\label{b1}
		\sum^3_{l,j=1}|\rmi k_l\hat{b}_{v_j}+\rmi k_j\hat{b}_{v_l}|^2&=\sum^3_{i,j=1}\langle\rmi k_l\hat{b}_{v_j}+\rmi k_j\hat{b}_{v_l},-\pa_t  ((I-P)_i\hat{f},p_{lji}(v,I)) \rangle\notag\\
		&\qquad\qquad-\sum^3_{i,j=1}\langle(\rmi k\cdot v(I-P)_i\hat{f},p_{lji}(v,I))-(L_i(I-P)\hat{f},p_{lji}(v,I))\rangle.
	\end{align}
	Similar arguments as in \eqref{c2} and \eqref{c3} yield
	\begin{align}\label{b2}
		\Big|-\sum^3_{i,j=1}&\langle(\rmi k\cdot v(I-P)_i\hat{f},p_{lji}(v,I))-(L_i(I-P)\hat{f},p_{lji}(v,I))\rangle\Big|\notag\\
		&\leq \ep\,|k|^2|\hat{b}|^2+\frac{C}{\ep}(1+|k|^2)\|(I-P)\hat{f}\|_{L^2_{v,I}}^2.
	\end{align}
Integration by parts in the variable $t$ gives
	\begin{align}\label{b3}
		\langle\rmi k_l\hat{b}_{v_j}& + \rmi k_j\hat{b}_{v_l},-\pa_t  ((I-P)_i\hat{f},p_{lji}(v,I))\rangle\notag\\
		=&-\pa_t 	\langle\rmi k_l\hat{b}_{v_j}+\rmi k_j\hat{b}_{v_l},((I-P)_i\hat{f},p_{lji}(v,I)) \rangle+	\langle\pa_t (\rmi k_l\hat{b}_{v_j}+\rmi k_j\hat{b}_{v_l}),((I-P)_i\hat{f},p_{lji}(v,I)) \rangle.
	\end{align}
	For the second term on the right hand side, one uses the second equation in \eqref{abcf} and Cauchy-Schwarz inequality to conclude that
	\begin{align}\label{b4}
		\Big|\langle\pa_t (\rmi k_l\hat{b}_{v_j}&+\rmi k_j\hat{b}_{v_l}), ((I-P)_i\hat{f},p_{lji}(v,I))\rangle\Big|\notag\\
		&\leq C|k|\left|\langle \frac{1}{m_i} \mathrm{i}k_i(\hat{a}+2\hat{c})+\frac{1}{c_i}\mathrm{i}k\cdot((I-P)_i\hat{f},v_lv\sqrt{M}), ((I-P)_i\hat{f},p_{lji}(v,I))\rangle\right|\notag\\
		&\qquad+C|k|\left|\langle \frac{1}{m_i} \mathrm{i}k_j(\hat{a}+2\hat{c})+\frac{1}{c_i}\mathrm{i}k\cdot((I-P)_i\hat{f},v_jv\sqrt{M}), ((I-P)_i\hat{f},p_{lji}(v,I))\rangle\right|\notag\\
		&\leq \ep_2|k|^2|\hat{a}|^2+\frac{C}{\ep_2}|k|^2|\hat{c}|^2+\frac{C}{\ep_2}|k|^2\|(I-P)\hat{f}\|_{L^2_{v,I}}^2,
	\end{align}
	where $\eps_2>0$ and the constant $C>0$ depends on $\de_i$, $m_i$ and $\eta$.
	Combining \eqref{b1}, \eqref{b2}, \eqref{b3}, \eqref{b4} with the fact that
		\begin{align*}
		\sum^3_{l,j=1}|\rmi k_l\hat{b}_{v_j}+\rmi k_j\hat{b}_{v_l}|^2=2|k|^2|\hat{b}|^2+2|k\cdot \hat{b}|^2,
	\end{align*}
	and then choosing $\ep>0$ sufficiently small, it holds that
	\begin{align}\label{estb}
		\pa_t \rmre	&\sum^3_{l,j=1}\langle\rmi k_l\hat{b}_{v_j}+\rmi k_j\hat{b}_{v_l}, ((I-P)_i\hat{f},p_{lji}(v,I))\rangle+\la|k|^2|\hat{b}|^2 \notag\\
		&\leq \ep_2|k|^2|\hat{a}|^2+\frac{C}{\ep_2}|k|^2|\hat{c}|^2+\frac{C}{\ep_2}|k|^2\|(I-P)\hat{f}\|_{L^2_{v,I}}^2.
	\end{align}
	We now focus on $\hat{a}$. The second equation in \eqref{abcf} shows that
	\begin{align}\label{a1}
		|k|^2|\hat{a}|^2&=\langle\rmi k\hat{a},\rmi k\hat{a}\rangle
		=-\langle\rmi k\hat{a},m_i\partial_{t}\hat{b}+2\mathrm{i}k\hat{c}+\frac{m_i}{c_i}\mathrm{i}k\cdot v((I-P)_i\hat{f},v\sqrt{M_i})\rangle\notag\\
		&=-m_i\pa_{t}\langle\rmi k\hat{a},\hat{b}\rangle+m_i\langle\rmi k\pa_{t}\hat{a},\hat{b}\rangle-\langle\rmi k\hat{a},2\mathrm{i}k\hat{c}+\frac{m_i}{c_i}\mathrm{i}k\cdot v((I-P)_i\hat{f},v\sqrt{M_i})\rangle.
	\end{align}
	An application of Cauchy-Schwarz inequality yields 
	\begin{align}\label{a2}
		\left|-\langle\rmi k\hat{a},2\mathrm{i}k\hat{c}+\frac{m_i}{c_i}\mathrm{i}k\cdot v((I-P)_i\hat{f},v\sqrt{M_i})\rangle\right|\leq \ep|k|^2|\hat{a}|^2+\frac{C}{\ep}|k|^2|\hat{c}|^2+\frac{C}{\ep}|k|^2\|(I-P)\hat{f}\|_{L^2_{v,I}}^2,\quad \ep>0.
	\end{align}
	From the first equation in \eqref{abcf}, we obtain
	\begin{align}\label{a3}
		|\langle\rmi k\pa_{t}\hat{a},\hat{b}\rangle|=|\langle\rmi k(\mathrm{i}k\cdot \hat{b}),\hat{b}\rangle|=|k\cdot\hat{b}|^2.
	\end{align}
	By \eqref{a1}, \eqref{a2} and \eqref{a3}, we choose a sufficiently small $\ep$ to get that
	\begin{align}\label{esta}
		\pa_{t}\rmre \langle\rmi k\hat{a},\hat{b}\rangle+\la|k|^2|\hat{a}|^2\leq |k\cdot\hat{b}|^2+C|k|^2|\hat{c}|^2+C|k|^2\|(I-P)\hat{f}\|_{L^2_{v,I}}^2.
	\end{align}
By taking suitable linear combinations of \eqref{estb} and \eqref{esta} such that the $|k\cdot\hat{b}|^2$ on the right hand side of \eqref{esta} can be absorbed by $\la|k|^2|\hat{b}|^2$ on the left hand side of \eqref{estb}, it holds that
\begin{align*}
	\pa_{t}\{\rmre	\sum^3_{l,j=1}&\langle\rmi k_l\hat{b}_{v_j}+\rmi k_j\hat{b}_{v_l}, ((I-P)_i\hat{f},p_{lji}(v,I))\rangle+\rmre \langle\rmi k\hat{a},\hat{b}\rangle\}+\la|k|^2|\hat{a}|^2+\la|k|^2|\hat{b}|^2\notag\\
	&\leq C\ep_2|k|^2|\hat{a}|^2+\frac{C}{\ep_2}|k|^2|\hat{c}|^2+\frac{C}{\ep_2}|k|^2\|(I-P)\hat{f}\|_{L^2_{v,I}}^2,
\end{align*}
with $C>0$ depending on $\de_i$, $m_i$ and $\eta$. Then, we choose $\ep_2>0$ sufficiently small such that
\begin{align*}
	\pa_{t}\{\rmre	\sum^3_{l,j=1}&\langle\rmi k_l\hat{b}_{v_j}+\rmi k_j\hat{b}_{v_l}, ((I-P)_i\hat{f},p_{lji}(v,I))\rangle+\rmre \langle\rmi k\hat{a},\hat{b}\rangle\}+\la|k|^2|\hat{a}|^2+\la|k|^2|\hat{b}|^2\notag\\
	\leq&C|k|^2|\hat{c}|^2+C|k|^2\|(I-P)\hat{f}\|_{L^2_{v,I}}^2.
\end{align*}
We multiply previous inequality with a small number and take summation with \eqref{estc} such that the $C|k|^2|\hat{c}|^2$ term above can be absorbed by $\la|k|^2|\hat{c}|^2$ on the left hand side of \eqref{estc}. We obtain that
	\begin{align*}
	\pa_t \sum^3_{l,j=1}\langle\rmi k_l\hat{b}_{v_j}&+\rmi k_j\hat{b}_{v_l}, ((I-P)_i\hat{f},p_{jli}(v,I))\rangle+\langle\rmi k\hat{a},\hat{b}\rangle+\langle\rmi k\hat{c}, ((I-P)_i\hat{f},p_i(v,I))\rangle\notag\\
	&+\la|k|^2(|\hat{a}|^2+|\hat{b}|^2+|\hat{c}|^2) \leq \ep_1|k\cdot \hat{b}|^2+\frac{C}{\ep_1}(1+|k|^2)\|(I-P)\hat{f}\|_{L^2_{v,I}}^2,
\end{align*}
	By choosing $\ep_1>0$ sufficiently small, it holds that
	\begin{align}\label{estabc}
		&\pa_t \rmre\,  \CE^{int}(\hat{f}_i)+\la\frac{|k|^2}{1+|k|^2}(|\hat{a}|^2+|\hat{b}|^2+|\hat{c}|^2)\leq C\|(I-P)\hat{f}\|_{L^2_{v,I}}^2,
	\end{align}
	where
	\begin{align*}
		\CE^{int}(\hat{f}_i):=&\sum^3_{l,j=1}\langle\rmi \frac{k_l\hat{b}_{v_j}+\rmi k_j\hat{b}_{v_l}}{1+|k|^2}, ((I-P)_i\hat{f},p_{jli}(v,I))\rangle+\langle\rmi \frac{k}{1+|k|^2}\hat{a},\hat{b}\rangle+\langle\rmi \frac{k}{1+|k|^2}\hat{c}, ((I-P)_i\hat{f},p_i(v,I))\rangle.
	\end{align*}
	Consequently,
\begin{align}\label{EL2}
	\left|\rmre\,\CE^{int}(\hat{f}_i)\right|\leq C\|\hat{f}\|_{L^2_{v,I}}.
	\end{align}
	Then, we can choose $\ep$ sufficiently small such that
	\begin{align*}
		\CE(\hat{f}):=\|\hat{f}\|_{L^2_{v,I}}^2+\ep\rmre\,\CE^{int}(\hat{f}_i)\sim\|\hat{f}\|_{L^2_{v,I}}^2,
	\end{align*}
	which, combined with \eqref{estabc} and \eqref{co}, yields that
	\begin{align}\label{estCE}
	\pa_{t}\CE(\hat{f}(t))+\la\frac{|k|^2}{1+|k|^2}\CE(\hat{f}(t))\leq 0,
\end{align}
where $\la>0$ depends on $\de_i$, $m_i$ and $\eta$.
Then it holds that
\begin{align*}
\CE(\hat{f}(t))\leq e^{-\la \frac{|k|^2}{1+|k|^2}t}\CE(\hat{f}(0)).
\end{align*}
Suppose $x\in \R^3$. By the equivalence relation \eqref{EL2} and the above inequality, we have that
\begin{align*}
	\|f(t)\|^2&\leq C\int_{\R^3}e^{-\la \frac{|k|^2}{1+|k|^2}t}\|\hat{f}(0)\|_{L^2_{v,I}}^2 dk\notag\\
	&\leq C\int_{|k|\leq 1}e^{-\la \frac{|k|^2}{1+|k|^2}t}\|\hat{f}(0)\|_{L^2_{v,I}}^2 dk+C\int_{|k|> 1}e^{-\la \frac{|k|^2}{1+|k|^2}t}\|\hat{f}(0)\|_{L^2_{v,I}}^2 dk\notag\\
	&\leq C\|f(0)\|_{L^2_{v,I}L^1_x}^2 \int_{|k|\leq 1}e^{-\la |k|^2t}dk+Ce^{-\frac{\la}{2} t}\int_{|k|> 1}\|\hat{f}(0)\|_{L^2_{v,I}}^2 dk\notag\\
	&\leq C(1+t)^{-\frac{3}{2}}\|f(0)\|_{L^2_{v,I}L^1_x}^2 +Ce^{-\frac{\la}{2} t}\|f(0)\|^2,
\end{align*}
which shows \eqref{L2decayR3}. If $x\in \T^3$, then the conservation laws imply 
	\begin{align*}
		(\int_{\T^3}a(t,x)dx,\int_{\T^3}b(t,x)dx,\int_{\T^3}c(t,x)dx)=0,
	\end{align*}
provided that $(M_0, J_0, E_0) = (0,0,0)$, which, combined with \eqref{estCE}, gives
	\begin{align*}
		\pa_{t}\CE(\hat{f}(t))+\frac{\la}{2}\CE(\hat{f}(t))\leq 0.
	\end{align*}
	 Then we obtain \eqref{L2decaytorus} by renaming $\la/2$ to be $\la$.
\end{proof}

\section{Linear $L^\infty$ Decay}\label{Section5}
In this section we consider the linear problem \eqref{LBE} in the weighted $L^\infty$ framework via the Duhamel's iteration technique as well as its interplay with $L^2$ time decay. Such iteration approach is carried out in \cite{Guo} to study the boundary problem of Boltzmann equation, see also \cite{UY} for a multi-iteration method on $\R^3$ and \cite{BG} for Mono-Mono mixture on torus. Here, after tracking an additional collisional event, due to the different structures of $K^{1,2,3}_{ij}$ we are forced to discuss eight cases of double collisions (Poly/Mono-Poly/Mono-Poly/Mono), choose the appropriate weight functions, and design the partition of integral regions for each case. 
\begin{proof}[Proof of Proposition \ref{lelinearLinfty}]
	Recall from \eqref{mildh} that
		\begin{align*}
		h_i(t,x,Z)&=e^{-\nu_i(Z)t}h_{i0}(x-vt,Z)\notag\\
		&\qquad+\sum^n_{j=1}\int_0^t e^{-\nu_i(Z)(t-s)}\int_{\CZ_j}(k_{ij}^2-k^1_{ij})(Z,Z_*)\frac{w_{i\be}(Z)}{w_{j\be}(Z_*)}h_j(s,x-v(t-s),Z_*)dZ_* ds\notag\\
		&\qquad-\sum^n_{j=1}\int_0^t e^{-\nu_i(Z)(t-s)}\int_{\CZ_i}k_{ij}^3(Z,Z_*)\frac{w_{i\be}(Z)}{w_{i\be}(Z_*)}h_i(s,x-v(t-s),Z_*)dZ_* ds.
	\end{align*}
Denoting $x_1=x-v(t-s),$ one more iteration on $h_j$ and $h_i$ on the right hand side shows
	\begin{align}\label{hi}
	h_i(t,x,Z)&=I_1+\sum^n_{j=1}(I_{2j}+I_{3j})+\sum^n_{j,l=1}(I_{4jl}+I_{5jl}+I_{6jl}+I_{7jl}),
	\end{align}
	where
	\begin{align}\label{Ijl}
	I_1&=e^{-\nu_i(Z)t}h_{i0}(x-vt,Z),\notag\\
	I_{2j}&=\int_0^t e^{-\nu_i(Z)(t-s)}\int_{\CZ_j}(k_{ij}^2-k^1_{ij})(Z,Z_*)\frac{w_{i\be}(Z)}{w_{i\be}(Z_*)}e^{-\nu_i(Z_*)s}h_{i0}(x_1-v_* s,Z_*) dZ_* ds,\notag\\
	I_{3j}&=-\int_0^t e^{-\nu_i(Z)(t-s)}\int_{\CZ_i}k_{ij}^3(Z,Z_*)\frac{w_{i\be}(Z)}{w_{i\be}(Z_*)}e^{-\nu_i(Z_*)s}h_{i0}(x_1-v_* s,Z_*)dZ_* ds,\notag\\
	I_{4jl}&=\int_0^t e^{-\nu_i(Z)(t-s)}\int_{\CZ_j}(k_{ij}^2-k^1_{ij})(Z,Z_*)\frac{w_{i\be}(Z)}{w_{j\be}(Z_*)}\notag\\
	&\ \qquad\times\int_0^s e^{-\nu_i(Z_*)(s-s_1)}\int_{\CZ_l}(k_{jl}^2-k^1_{jl})(Z_*,Z_{**})\frac{w_{j\be}(Z_*)}{w_{l\be}(Z_{**})}h_l(s_1,x_1-v_{**}(s-s_1),Z_{**})dZ_{**} ds_1 dZ_*ds, \notag\\
	I_{5jl}&=-\int_0^t e^{-\nu_i(Z)(t-s)}\int_{\CZ_j}(k_{ij}^2-k^1_{ij})(Z,Z_*)\frac{w_{i\be}(Z)}{w_{j\be}(Z_*)}\notag\\
	&\ \qquad\times\int_0^s e^{-\nu_i(Z_*)(s-s_1)}\int_{\CZ_j}k_{jl}^3(Z_*,Z_{**})\frac{w_{j\be}(Z_*)}{w_{j\be}(Z_{**})}h_j(s_1,x_1-v_{**}(s-s_1),Z_{**})dZ_{**} ds_1 dZ_*ds,\notag\\
	I_{6jl}&=-\int_0^t e^{-\nu_i(Z)(t-s)}\int_{\CZ_i}k^3_{ij}(Z,Z_*)\frac{w_{i\be}(Z)}{w_{i\be}(Z_*)}\notag\\
	&\ \qquad\times\int_0^s e^{-\nu_i(Z_*)(s-s_1)}\int_{\CZ_l}(k_{il}^2-k_{il}^1)(Z_*,Z_{**})\frac{w_{i\be}(Z_*)}{w_{l\be}(Z_{**})}h_l(s_1,x_1-v_{**}(s-s_1),Z_{**})dZ_{**}  ds_1 dZ_*ds,\notag\\
	I_{7jl}&=\int_0^t e^{-\nu_i(Z)(t-s)}\int_{\CZ_i}k^3_{ij}(Z,Z_*)\frac{w_{i\be}(Z)}{w_{i\be}(Z_*)}\notag\\
	&\ \qquad\times\int_0^s e^{-\nu_i(Z_*)(s-s_1)}\int_{\CZ_i}k_{il}^3(Z_*,Z_{**})\frac{w_{i\be}(Z_*)}{w_{i\be}(Z_{**})}h_i(s_1,x_1-v_{**}(s-s_1),Z_{**})dZ_{**} ds_1 dZ_*ds.
\end{align}
We apply $L^2_x\cap L^\infty_x$ norm on \eqref{hi} to get 
\begin{align}\label{L2hi}
	\|h_i(t,Z)\|_{L^2_x\cap L^\infty_x}=&\|I_1\|_{L^2_x\cap L^\infty_x}+\sum^n_{j=1}(\|I_{2j}\|_{L^2_x\cap L^\infty_x}+\|I_{3j}\|_{L^2_x\cap L^\infty_x})\notag\\
	&+\sum^n_{j,l=1}(\|I_{4jl}\|_{L^2_x\cap L^\infty_x}+\|I_{5jl}\|_{L^2_x\cap L^\infty_x}+\|I_{6jl}\|_{L^2_x\cap L^\infty_x}+\|I_{7jl}\|_{L^2_x\cap L^\infty_x}).
\end{align}
We now bound all terms above. It is direct to get from \eqref{nu} that
\begin{align}\label{I1}
	\|I_1\|_{L^2_x\cap L^\infty_x}\leq Ce^{-\nu_0 t}\|h_{i0}\|_{L^\infty_{v,I}(L^1_x\cap L^\infty_x)}.
\end{align}
The boundedness of $\int k^{1,2,3}_{ij}$ shown in Lemma \ref{leK}, Lemma \ref{leKpm}, Lemma \ref{leKmp} and Lemma \ref{leKmm} gives
\begin{align}\label{I23}
	\|I_{2j}\|_{L^2_x\cap L^\infty_x}+\|I_{3j}\|_{L^2_x\cap L^\infty_x}\leq Ce^{-\nu_0 t}t\|h_{i0}\|_{L^\infty_{v,I}(L^1_x\cap L^\infty_x)}.
\end{align}
For $I_{4jl}$, we only need to control the $L^2_x\cap L^\infty_x$ norm of
\begin{align}\label{DefI4jl}
	I^{\al_1,\al_2}_{4jl}:=&\Big|\int_0^t e^{-\nu_i(Z)(t-s)}\int_{\CZ_j}k_{ij}^{\al_1}(Z,Z_*)\frac{w_{i\be}(Z)}{w_{j\be}(Z_*)}\notag\\
	&\quad\times\int_0^s e^{-\nu_i(Z_*)(s-s_1)}\int_{\CZ_l}k_{jl}^{\al_2}(Z_*,Z_{**})\frac{w_{j\be}(Z_*)}{w_{l\be}(Z_{**})}h_l(s_1,x_1-v_{**}(s-s_1),Z_{**})dZ_{**}  ds_1 dZ_*ds\Big|,
\end{align}
with $\al_1,\al_2=1,2$.
We split the above integral into five cases.

\medskip
\noindent{\it Case 1. } When
\begin{align*}
	\begin{cases}
		|v|>N,\quad \it{if}\ i\leq p\\
		|v|>N\ \it{or}\ I>N,\quad \it{if}\ i>p,
	\end{cases}
\end{align*}
it follows from Lemma \ref{leK} to Lemma \ref{leKmm} that
\begin{align}\label{case1}
	\|I^{\al_1,\al_2}_{4jl}\|_{L^2_x\cap L^\infty_x}\leq&C(1+t)^{-\frac{3}{4}}\sup_{0\leq t<\infty}\|(1+t)^{\frac{3}{4}}h_l(t)\|_{L^\infty_{v,I}(L^2_x\cap L^\infty_x)}\notag\\
	&\qquad\qquad\times \int_{\CZ_j}k_{ij}^{\al_1}(Z,Z_*)\frac{w_{i\be}(Z)}{w_{j\be}(Z_*)}\int_{\CZ_l}k_{jl}^{\al_2}(Z_*,Z_{**})\frac{w_{j\be}(Z_*)}{w_{l\be}(Z_{**})}dZ_{**}dZ_*\notag\\
	\leq&C(1+t)^{-\frac{3}{4}}\sup_{0\leq t<\infty}\|(1+t)^{\frac{3}{4}}h_l(t)\|_{L^\infty_{v,I}(L^2_x\cap L^\infty_x)} \int_{\CZ_j}k_{ij}^{\al_1}(Z,Z_*)\frac{w_{i\be}(Z)}{w_{j\be}(Z_*)}dZ_*\notag\\
	\leq&\frac{C(1+t)^{-\frac{3}{4}}}{N^{1/8}}\sup_{0\leq t<\infty}\|(1+t)^{\frac{3}{4}}h_l(t)\|_{L^\infty_{v,I}(L^2_x\cap L^\infty_x)}.
\end{align}
In the first line above, we observe that the $L^2_x\cap L^\infty_x$ only acts on the spacial variable in $h_l(s_1,x_1-v_{**}(s-s_1),Z_{**})$ on the right hand side of \eqref{DefI4jl}. Thus, we use the property
\begin{align*}
	\int_0^t e^{-\nu_i(Z)(t-s)}\int_0^s e^{-\nu_i(Z_*)(s-s_1)}(1+s_1)^{-\frac{3}{4}}ds_1 ds\leq C(1+t)^{-\frac{3}{4}}.
\end{align*}
\noindent{\it Case 2. }
\begin{align*}
	\begin{cases}
		|v-v_*|>N,\quad \it{if}\ \al_1=1,\\
		|m_i v-m_j v_*|>N,\quad \it{if}\ \al_1=2,
	\end{cases}\quad \text{or} \qquad	\begin{cases}
	|v_*-v_{**}|>N,\quad \it{if}\ \al_2=1,\\
	|m_jv_*-m_lv_{**}|>N,\quad \it{if}\ \al_2=2.
	\end{cases}
\end{align*}
It follows from Lemma \ref{leK} to Lemma \ref{leKmm} that
\begin{align}\label{case2}
	\|I^{\al_1,\al_2}_{4jl}\|_{L^2_x\cap L^\infty_x}\leq&C\sup_{0\leq t<\infty}\|(1+t)^{-\frac{3}{4}}h_l(t)\|_{L^\infty_{v,I}(L^2_x\cap L^\infty_x)}\notag\\
	&\times \int_{\CZ_j}k_{ij}^{\al_1}(Z,Z_*)\frac{w_{i\be}(Z)}{w_{j\be}(Z_*)}\int_{\CZ_l}k_{jl}^{\al_2}(Z_*,Z_{**})\frac{w_{j\be}(Z_*)}{w_{l\be}(Z_{**})}dZ_{**}dZ_*\notag\\
	\leq&C(1+t)^{-\frac{3}{4}}\sup_{0\leq t<\infty}\|(1+t)^{\frac{3}{4}}h_l(t)\|_{L^\infty_{v,I}(L^2_x\cap L^\infty_x)}\notag\\
	&\times \int_{\CZ_j}k_{ij}^{\al_1}(Z,Z_*)\frac{w_{i\be}(Z)}{w_{j\be}(Z_*)}\{e^\frac{\de|v-v_*|^2}{64}e^\frac{-\de|v-v_*|^2}{64}\chi_{\{\al_1=1\}}+e^\frac{\de|m_iv-m_jv_*|^2}{64}e^\frac{-\de|m_iv-m_jv_*|^2}{64}\chi_{\{\al_1=2\}}\}\notag\\
	&\quad\times\int_{\CZ_l}k_{jl}^{\al_2}(Z_*,Z_{**})\frac{w_{j\be}(Z_*)}{w_{l\be}(Z_{**})}\notag\\
	&\qquad\times\{e^\frac{\de|v_*-v_{**}|^2}{64}e^\frac{-\de|v_*-v_{**}|^2}{64}\chi_{\{\al_2=1\}}+e^\frac{\de|m_jv_*-m_lv_{**}|^2}{64}e^\frac{-\de|m_jv_*-m_lv_{**}|^2}{64}\chi_{\{\al_2=2\}}\}dZ_{**}dZ_*\notag\\
	\leq&\frac{C(1+t)^{-\frac{3}{4}}}{N^{1/8}}\sup_{0\leq t<\infty}\|(1+t)^{\frac{3}{4}}h_l(t)\|_{L^\infty_{v,I}(L^2_x\cap L^\infty_x)}.
\end{align}
\noindent{\it Case 3. } When
\begin{align*}
		\begin{cases}
		I_*>N\ \it{or}\ I_{**}>N,\quad \it{if}\ j,l>p\\
		I_*>N,\quad \it{if}\ j>p,\ l\leq p\\
		I_{**}>N,\quad \it{if}\ j\leq p,\ l>p,
	\end{cases}
\end{align*}
it holds by Lemma \ref{leK} to Lemma \ref{leKmm} that
\begin{align*}
	\|I^{\al_1,\al_2}_{4jl}\|_{L^2_x\cap L^\infty_x}\leq&C\sup_{0\leq t<\infty}\|(1+t)^{-\frac{3}{4}}h_l(t)\|_{L^\infty_{v,I}(L^2_x\cap L^\infty_x)}\notag\\
	&\qquad\qquad\times \int_{\CZ_j}k_{ij}^{\al_1}(Z,Z_*)\frac{w_{i\be}(Z)}{w_{j\be}(Z_*)}\frac{(1+I_*)^{1/8}}{(1+I_*)^{1/8}}\int_{\CZ_l}k_{jl}^{\al_2}(Z_*,Z_{**})\frac{w_{j\be}(Z_*)}{w_{l\be}(Z_{**})}dZ_{**}dZ_*\notag\\
	\leq&C\sup_{0\leq t<\infty}\|(1+t)^{\frac{3}{4}}h_l(t)\|_{L^\infty_{v,I}(L^2_x\cap L^\infty_x)}\notag\\
	&\qquad\qquad\times\int_{\CZ_j}\{k_{ij}^{\al_1}(Z,Z_*)\frac{w_{i\be}(Z)}{w_{j\be}(Z_*)}(1+I_*)^{1/8}\}\frac{\chi_{\{I_*>N\}}}{(1+I_*)^{1/8}}dZ_*\notag\\
	\leq&\frac{C(1+t)^{-\frac{3}{4}}}{N^{1/8}}\sup_{0\leq t<\infty}\|(1+t)^{\frac{3}{4}}h_l(t)\|_{L^\infty_{v,I}(L^2_x\cap L^\infty_x)},
\end{align*}
for $j>p$, $l\leq p$, 
\begin{align*}
	\|I^{\al_1,\al_2}_{4jl}\|_{L^2_x\cap L^\infty_x}\leq&C\sup_{0\leq t<\infty}\|(1+t)^{-\frac{3}{4}}h_l(t)\|_{L^\infty_{v,I}(L^2_x\cap L^\infty_x)}\notag\\
	&\qquad\qquad\times \int_{\CZ_j}k_{ij}^{\al_1}(Z,Z_*)\frac{w_{i\be}(Z)}{w_{j\be}(Z_*)}\int_{\CZ_l}k_{jl}^{\al_2}(Z_*,Z_{**})\frac{w_{j\be}(Z_*)}{w_{l\be}(Z_{**})}\frac{(1+I_{**})^{1/8}}{(1+I_{**})^{1/8}}dZ_{**}dZ_*\notag\\
	\leq&\frac{C(1+t)^{-\frac{3}{4}}}{N^{1/8}}\sup_{0\leq t<\infty}\|(1+t)^{\frac{3}{4}}h_l(t)\|_{L^\infty_{v,I}(L^2_x\cap L^\infty_x)},
\end{align*}
for $l>p$, $j\leq p$, and
\begin{align*}
	\|I^{\al_1,\al_2}_{4jl}\|_{L^2_x\cap L^\infty_x}\leq&C\sup_{0\leq t<\infty}\|(1+t)^{-\frac{3}{4}}h_l(t)\|_{L^\infty_{v,I}(L^2_x\cap L^\infty_x)} \int_{\CZ_j}k_{ij}^{\al_1}(Z,Z_*)\frac{w_{i\be}(Z)}{w_{j\be}(Z_*)}\frac{(1+I_*)^{1/8}}{(1+I_*)^{1/8}}\notag\\
	&\qquad\qquad\qquad\qquad\times\int_{\CZ_l}k_{jl}^{\al_2}(Z_*,Z_{**})\frac{w_{j\be}(Z_*)}{w_{l\be}(Z_{**})}\frac{(1+I_{**})^{1/8}}{(1+I_{**})^{1/8}}dZ_{**}dZ_*\notag\\
	\leq&\frac{C(1+t)^{-\frac{3}{4}}}{N^{1/8}}\sup_{0\leq t<\infty}\|(1+t)^{\frac{3}{4}}h_l(t)\|_{L^\infty_{v,I}(L^2_x\cap L^\infty_x)},
\end{align*}
for $l,j>p$. It follows from the aforementioned three estimates that
\begin{align}\label{case3}
	\|I^{\al_1,\al_2}_{4jl}\|_{L^2_x\cap L^\infty_x}\leq\frac{C(1+t)^{-\frac{3}{4}}}{N^{1/8}}\sup_{0\leq t<\infty}\|(1+t)^{\frac{3}{4}}h_l(t)\|_{L^\infty_{v,I}(L^2_x\cap L^\infty_x)}.
\end{align}

\medskip
\noindent{\it Case 4. } When $s-s_1\leq\la$, where $0<\la<\frac{1}{2}$ will be determined later.
We can control $I^{\al_1,\al_2}_{4jl}$ by
\begin{align}\label{case4}
	\|I^{\al_1,\al_2}_{4jl}\|_{L^2_x\cap L^\infty_x}\leq&\Big|\int_0^t e^{-\nu_i(Z)(t-s)}\int_{\CZ_j}k_{ij}^{\al_1}(Z,Z_*)\frac{w_{i\be}(Z)}{w_{j\be}(Z_*)}\notag\\
	&\quad\times\int_{s-\la}^s e^{-\nu_i(Z_*)(s-s_1)}\int_{\CZ_l}k_{jl}^{\al_2}(Z_*,Z_{**})\frac{w_{j\be}(Z_*)}{w_{l\be}(Z_{**})}\|h_l(s_1,Z_{**})\|_{L^2_x\cap L^\infty_x}dZ_{**}  ds_1 dZ_*ds\Big|\notag\\
	\leq&C\la(1+t)^{-\frac{3}{4}}\sup_{0\leq t<\infty}\|(1+t)^{\frac{3}{4}}h_l(t)\|_{L^\infty_{v,I}(L^2_x\cap L^\infty_x)}.
\end{align}

\medskip
\noindent{\it Case 5. } We can write the remainder case as
\begin{align}\label{vbound}
	\begin{cases}
		|v|\leq N,\quad \it{if}\ i\leq p\\
		|v|\leq N\ \it{and}\ I\leq N,\quad \it{if}\ i>p,
	\end{cases}
\end{align}
\begin{align}\label{v*bound}
	\begin{cases}
		|v-v_*|\leq N,\quad \it{if}\ \al_1=1,\\
		|m_i v-m_j v_*|\leq N,\quad \it{if}\ \al_1=2,\\
		|v_*-v_{**}|\leq N,\quad \it{if}\ \al_2=1,\\
		|m_jv_*-m_lv_{**}|\leq N,\quad \it{if}\ \al_2=2,
	\end{cases}\quad \text{and} \quad
	\begin{cases}
		I_*\leq N\ \it{and}\ I_{**}\leq N,\quad \it{if}\ j,l>p\\
		I_*\leq N,\quad \it{if}\ j>p,\ l\leq p\\
		I_{**}\leq N,\quad \it{if}\ j\leq p,\ l>p,
	\end{cases}
\end{align}
and $s-s_1>\la$. Notice that
\begin{align*}
|v_*|&\leq C|v|+C\min\{|v-v_*|,|m_iv-m_jv_*|\} \qquad\text{and} \\
|v_{**}|\leq C|v|+C&\min\{|v-v_*|,|m_iv-m_jv_*|\}+C\min\{|v_*-v_{**}|,|m_jv_*-m_lv_{**}|\}.
\end{align*}
For all possible $i,j,l,\al_1,\al_2$, then \eqref{vbound} and \eqref{v*bound} imply $Z,Z_*,Z_{**}$ are in a bounded region with
\begin{align*}
	|v|+|v_*|+|v_{**}|+I\chi_{\{i>p\}}+I_*\chi_{\{j>p\}}+I_{**}\chi_{\{l>p\}}\leq CN.
\end{align*}
Define the region $$\CZ^B_i=\begin{cases}
		\{v\big|\ |v|\leq CN\},\quad \it{if}\ i\leq p,\\
		\{(v,I)\big|\ |v|+I\leq CN\},\quad \it{if}\ i>p.
\end{cases}$$
For any $1\leq i,j\leq n$ and $\al=1,2$, we can choose a smooth function $k^{\al}_{Nij}=k^{\al}_{Nij}(Z,Z_*)$ with compact support such that
\begin{align}\label{approx}
	\sup_{Z\in \CZ^B_i}\int_{\CZ^B_{*j}}|k_{ij}^{\al}(Z,Z_*)\frac{w_{i\be}(Z)}{w_{j\be}(Z_*)}-k^{\al}_{Nij}(Z,Z_*)|dZ_* \leq \frac{C}{N}.
\end{align}
Using Fubini's theorem, a direct calculation shows that
\begin{align}\label{1Ial}
	\|I^{\al_1,\al_2}_{4jl}\|_{L^2_x\cap L^\infty_x}&\leq \int_0^t\int_0^s\int_{\CZ^B_{*j}\times \CZ^B_{**l}} e^{-\nu_0(t-s)}e^{-\nu_0(s-s_1)}|k_{ij}^{\al_1}(Z,Z_*)\frac{w_{i\be}(Z)}{w_{j\be}(Z_*)}-k^{\al_1}_{Nij}(Z,Z_*)| \notag\\
	&\qquad\quad\times k_{jl}^{\al_2}(Z_*,Z_{**})\frac{w_{j\be}(Z_*)}{w_{l\be}(Z_{**})}\|h(s_1)\|_{L^\infty_{v,I}(L^2_x\cap L^\infty_x)}dZ_{**}dZ_*ds_1ds\notag\\
	&\qquad+\int_0^t\int_0^s\int_{\CZ^B_{*j}\times \CZ^B_{**l}} e^{-\nu_0(t-s)}e^{-\nu_0(s-s_1)}k_{ij}^{\al_1}(Z,Z_*)\frac{w_{i\be}(Z)}{w_{j\be}(Z_*)}\notag\\
	&\qquad\quad\times |k_{ij}^{\al_2}(Z_*,Z_{**})\frac{w_{j\be}(Z_*)}{w_{l\be}(Z_{**})}-k^{\al_2}_{Njl}(Z_*,Z_{**})|\|h(s_1)\|_{L^\infty_{v,I}(L^2_x\cap L^\infty_x)}dv_{**}dZ_{**}dZ_*ds_1ds\notag\\
	&\qquad+\int_0^t\int_0^s\int_{\CZ^B_{*j}\times \CZ^B_{**l}} e^{-\nu_0(t-s)}e^{-\nu_0(s-s_1)}k^{\al_1}_{Nij}(Z,Z_*) \notag\\
	&\qquad\quad\times k^{\al_2}_{Njl}(Z_*,Z_{**})|h(s_1,x_1-v_*(s-s_1),Z_{**})|dZ_{**}dZ_*ds_1ds\notag\\
	&\leq \frac{C(1+t)^{-\frac{3}{4}}}{N}\sup_{0\leq t<\infty}\|(1+t)^{\frac{3}{4}}h_l(t)\|_{L^\infty_{v,I}(L^2_x\cap L^\infty_x)}+\|J\|_{L^2_x\cap L^\infty_x},
\end{align}
with
\begin{align*}
	J=\int_0^t\int_0^s\int_{\CZ^B_{*j}\times \CZ^B_{**l}} e^{-\nu_0(t-s)}e^{-\nu_0(s-s_1)}&k^{\al_1}_{Nij}(Z,Z_*) k^{\al_2}_{Njl}(Z_*,Z_{**})\notag\\
	&\times|h(s_1,x_1-v_*(s-s_1),Z_{**})|dZ_{**}dZ_*ds_1ds.
\end{align*}
We use change of variables $y=x_1-v_*(s-s_1)$ to obtain that
\begin{align*}
|J|\leq&	C_N \int_0^t\int_0^s\int_{\CZ^B_{*j}\times \CZ^B_{**l}} e^{-\nu_0(t-s)}e^{-\nu_0(s-s_1)}|h(s_1,y,Z_{**})|dZ_{**}dZ_*ds_1ds\notag\\
\leq&	C_{N,\la} \int_0^t\int_0^s e^{-\nu_0(t-s)}e^{-\nu_0(s-s_1)}\big(\int_{\R^3\times \CZ_{j}}|h(s_1,y,Z_{**})|^2dZ_{**}dy\big)^\frac{1}{2}ds_1ds,
\end{align*}
which, combined with \eqref{L2decayR3}, yields
\begin{align}\label{estJ}
	|J|\leq&C_{N,\la}(1+t)^{-\frac{3}{4}}(\|f_0\|_{L^2_{v,I}(L^1_x\cap L^2_x)})\leq C_{N,\la}(1+t)^{-\frac{3}{4}}\|h_0\|_{L^\infty_{v,I}(L^1_x\cap L^\infty_x)}.
	\end{align}
The $L^2_x$ norm is more direct to control. It holds from the definition of $J$ and \eqref{L2decayR3} that
\begin{align}\label{estJL2}
	\|J\|_{L^2_x}&\leq C_N\int_0^t\int_0^s\int_{\CZ^B_{*j}\times \CZ^B_{**l}} e^{-\nu_0(t-s)}e^{-\nu_0(s-s_1)}\|f(s_1,Z_{**})\|_{L^2_x}dZ_{**}dZ_*ds_1ds\notag\\
	&\leq C_{N,\la}(1+t)^{-\frac{3}{4}}\|h_0\|_{L^\infty_{v,I}(L^1_x\cap L^\infty_x)}.
\end{align}
	Moreover, it follows from \eqref{1Ial}, \eqref{estJ} and \eqref{estJL2} that
	\begin{align}\label{case5}
		\|I^{\al_1,\al_2}_{4jl}\|_{L^2_x\cap L^\infty_x}&\leq \frac{C(1+t)^{-\frac{3}{4}}}{N}\sup_{0\leq t<\infty}\|(1+t)^{\frac{3}{4}}h_l(t)\|_{L^\infty_{v,I}(L^2_x\cap L^\infty_x)}+C_{N,\la}(1+t)^{-\frac{3}{4}}\|h_0\|_{L^\infty_{v,I}(L^1_x\cap L^\infty_x)}.
	\end{align}	
	We collect \eqref{case1}, \eqref{case2}, \eqref{case3}, \eqref{case4} and \eqref{case5} to get that
		\begin{align*}
		\|I^{\al_1,\al_2}_{4jl}\|_{L^2_x\cap L^\infty_x}&\leq C(\frac{1}{N^{1/8}}+
		\la)(1+t)^{-\frac{3}{4}}\sup_{0\leq t<\infty}\|(1+t)^{\frac{3}{4}}h_l(t)\|_{L^\infty_{v,I}(L^2_x\cap L^\infty_x)}+C_{N,\la}(1+t)^{-\frac{3}{4}}\|h_0\|_{L^\infty_{v,I}(L^1_x\cap L^\infty_x)},
	\end{align*}
	which further implies that
	\begin{multline}\label{I4}
		\|I_{4jl}\|_{L^2_x\cap L^\infty_x}\leq C(\frac{1}{N^{1/8}}+
		\la)(1+t)^{-\frac{3}{4}}\sup_{0\leq t<\infty}\|(1+t)^{\frac{3}{4}}h_l(t)\|_{L^\infty_{v,I}(L^2_x\cap L^\infty_x)}\\+C_{N,\la}(1+t)^{-\frac{3}{4}}\|h_0\|_{L^\infty_{v,I}(L^1_x\cap L^\infty_x)}.
	\end{multline}
The estimates on $I_{5jl}$, $I_{6jl}$ and $I_{7jl}$ are similar. We only list the cases as previously presented and show the sketch of the proof. To bound $I_{5jl}$, we recall its definition in \eqref{Ijl} and control the $L^2_x\cap L^\infty_x$ norm of the integral
\begin{align*}
	I^{\al_1}_{5jl}:=&\Big|\int_0^t e^{-\nu_i(Z)(t-s)}\int_{\CZ_j}k_{ij}^{\al_1}(Z,Z_*)\frac{w_{i\be}(Z)}{w_{j\be}(Z_*)}\notag\\
	&\qquad\qquad\times\int_0^s e^{-\nu_i(Z_*)(s-s_1)}\int_{\CZ_j}k_{jl}^{3}(Z_*,Z_{**})\frac{w_{j\be}(Z_*)}{w_{j\be}(Z_{**})}h_j(s_1,x_1-v_{**}(s-s_1),Z_{**})dZ_{**}  ds_1 dZ_*ds\Big|,
\end{align*}
with $\al_1=1,2.$ If 
\begin{align*}
\begin{cases}
	|v|>N,\quad \it{if}\ i\leq p\\
	|v|>N\ \it{or}\ I>N,\quad \it{if}\ i>p,
\end{cases}
\end{align*}
similar argument as in \eqref{case1} shows that
\begin{align}\label{case15}
\|I^{\al_1}_{5jl}\|_{L^2_x\cap L^\infty_x}\leq&\frac{C(1+t)^{-\frac{3}{4}}}{N^{1/8}}\sup_{0\leq t<\infty}\|(1+t)^{\frac{3}{4}}h_j(t)\|_{L^\infty_{v,I}(L^2_x\cap L^\infty_x)}.
\end{align}
If 
\begin{align*}
	\begin{cases}
		|v-v_*|>N,\quad \it{if}\ \al_1=1,\\
		|m_i v-m_j v_*|>N,\quad \it{if}\ \al_1=2,
	\end{cases}\quad
\text{or}\qquad
		|v_*-v_{**}|>N,
\end{align*}
it holds from the calculations as in \eqref{case2} that
\begin{align}\label{case25}
	\|I^{\al_1}_{5jl}\|_{L^2_x\cap L^\infty_x}\leq&\frac{C(1+t)^{-\frac{3}{4}}}{N^{1/8}}\sup_{0\leq t<\infty}\|(1+t)^{\frac{3}{4}}h_j(t)\|_{L^\infty_{v,I}(L^2_x\cap L^\infty_x)}.
\end{align}
If 
$I_*>N$ or $I_{**}>N$ for $j>p$,
or
$s-s_1\leq\la$,
we bound $I^{\al_1}_{5jl}$ as in \eqref{case3} and \eqref{case4} that
\begin{align}\label{case35}
	\|I^{\al_1}_{5jl}\|_{L^2_x\cap L^\infty_x}\leq&C(1+t)^{-\frac{3}{4}}(N^{-1/8}+\la)\sup_{0\leq t<\infty}\|(1+t)^{\frac{3}{4}}h_j(t)\|_{L^\infty_{v,I}(L^2_x\cap L^\infty_x)}.
\end{align}
The rest of the cases can be summarized as 
\begin{align*}
	\begin{cases}
		|v|\leq N,\quad \it{if}\ i\leq p\\
		|v|\leq N\ \it{and}\ I\leq N,\quad \it{if}\ i>p,
	\end{cases}\quad\text{and}\quad
	\begin{cases}
		|v-v_*|\leq N,\quad \it{if}\ \al_1=1,\\
		|m_i v-m_j v_*|\leq N,\quad \it{if}\ \al_1=2,\\
		|v_*-v_{**}|\leq N,
	\end{cases}
\end{align*}
$I_*\leq N$ and $I_{**}\leq N$ if $j>p,$
and $s-s_1>\la$. Noting that we have the bound
\begin{align*}
	|v|+|v_*|+|v_{**}|+I\chi_{\{i>p\}}+I_*\chi_{\{j>p\}}+I_{**}\chi_{\{j>p\}}\leq CN,
\end{align*}
we can choose an smooth approximation function with compact support $k^\al_{Nij}$ such that \eqref{approx} holds. Moreover, to approximate $k^3_{ij}$ we choose an smooth function with compact support $k^3_{Njl}$ such that
\begin{align}\label{approxk3}
	\sup_{Z\in \CZ^B_j}\int_{\CZ^B_{*j}}|k_{jl}^{3}(Z,Z_*)\frac{w_{j\be}(Z)}{w_{j\be}(Z_*)}-k^{3}_{Njl}(Z,Z_*)|dv_*dI_* \leq \frac{C}{N}.
\end{align}
Then similar arguments as in \eqref{1Ial}, \eqref{estJ} and \eqref{case5} give that
\begin{align}\label{case45}
	\|I^{\al_1}_{5jl}\|_{L^2_x\cap L^\infty_x}&\leq \frac{C(1+t)^{-\frac{3}{4}}}{N}\sup_{0\leq t<\infty}\|(1+t)^{\frac{3}{4}}h_j(t)\|_{L^\infty_{v,I}(L^2_x\cap L^\infty_x)}+C_{N,\la}(1+t)^{-\frac{3}{4}}\|h_0\|_{L^\infty_{v,I}(L^1_x\cap L^\infty_x)}.
\end{align}	
It follows from \eqref{case15}, \eqref{case25}, \eqref{case35} and \eqref{case45} that
\begin{align*}
	\|I^{\al_1}_{5jl}\|_{L^2_x\cap L^\infty_x}&\leq C(\frac{1}{N^{1/8}}+
	\la)(1+t)^{-\frac{3}{4}}\sup_{0\leq t<\infty}\|(1+t)^{\frac{3}{4}}h_j(t)\|_{L^\infty_{v,I}(L^2_x\cap L^\infty_x)}+C_{N,\la}(1+t)^{-\frac{3}{4}}\|h_0\|_{L^\infty_{v,I}(L^1_x\cap L^\infty_x)},
\end{align*}
which yields
\begin{align}\label{I5}
	\|I_{5jl}\|_{L^2_x\cap L^\infty_x}&\leq C(\frac{1}{N^{1/8}}+
	\la)(1+t)^{-\frac{3}{4}}\sup_{0\leq t<\infty}\|(1+t)^{\frac{3}{4}}h_j(t)\|_{L^\infty_{v,I}(L^2_x\cap L^\infty_x)}+C_{N,\la}(1+t)^{-\frac{3}{4}}\|h_0\|_{L^\infty_{v,I}(L^1_x\cap L^\infty_x)}.
\end{align}
For $I_{6jl}$, defined in \eqref{Ijl}, we estimate
\begin{align*}
	I^{\al_1}_{6jl}&=\Big|\int_0^t e^{-\nu_i(Z)(t-s)}\int_{\CZ_i}k^3_{ij}(Z,Z_*)\frac{w_{i\be}(Z)}{w_{i\be}(Z_*)}\notag\\
	&\qquad\qquad\times\int_0^s e^{-\nu_i(Z_*)(s-s_1)}\int_{\CZ_l}k_{il}^\al(Z_*,Z_{**})\frac{w_{i\be}(Z_*)}{w_{l\be}(Z_{**})}h_l(s_1,x_1-v_{**}(s-s_1),Z_{**})dZ_{**}  ds_1 dZ_*ds\Big|,
\end{align*}
with $\al_1=1,2.$ 
If \begin{align*}
	\begin{cases}
		|v|>N,\quad \it{if}\ i\leq p\\
		|v|>N\ \it{or}\ I>N,\quad \it{if}\ i>p,
	\end{cases}
\end{align*}
or
$|v-v_*|> N,$ or
\begin{align*}
	\begin{cases}
		|v_*-v_{**}|> N,\quad \it{if}\ \al_1=1,\\
		|m_iv_*-m_lv_{**}|> N,\quad \it{if}\ \al_1=2,
	\end{cases}\quad \text{or} \qquad
	\begin{cases}
		I_*>N\ \it{or}\ I_{**}>N,\quad \it{if}\ i,l>p\\
		I_*>N,\quad \it{if}\ i>p,\ l\leq p\\
		I_{**}>N,\quad \it{if}\ i\leq p,\ l>p.
	\end{cases}
\end{align*}
or $s-s_1\leq\la$, the arguments in \eqref{case1}, \eqref{case2}, \eqref{case3} and \eqref{case4} show
\begin{align}\label{case16}
	\|I^{\al_1}_{6jl}\|_{L^2_x\cap L^\infty_x}\leq&C(1+t)^{-\frac{3}{4}}(N^{-1/8}+\la)\sup_{0\leq t<\infty}\|(1+t)^{\frac{3}{4}}h_l(t)\|_{L^\infty_{v,I}(L^2_x\cap L^\infty_x)}.
\end{align}
The last case is when
\begin{align*}
	\begin{cases}
		|v|\leq N,\quad \it{if}\ i\leq p\\
		|v|\leq N\ \it{and}\ I\leq N,\quad \it{if}\ i>p,
	\end{cases}
\end{align*}
\begin{align*}
	\begin{cases}
		|v-v_*|\leq N,\\
		|v_*-v_{**}|\leq N,\quad \it{if}\ \al_1=1,\\
		|m_iv_*-m_lv_{**}|\leq N,\quad \it{if}\ \al_1=2,
	\end{cases}\quad\text{and}\quad
	\begin{cases}
		I_*\leq N\ \it{and}\ I_{**}\leq   N,\quad \it{if}\ i,l>p\\
		I_*\leq N,\quad \it{if}\ i>p,\ l\leq p\\
		I_{**}\leq N,\quad \it{if}\ i\leq p,\ l>p.
	\end{cases}
\end{align*}
and $s-s_1>\la$. Then, by choosing smooth approximation functions $k^{1,2,3}_{Nij}$ with compact support such that
\begin{align*}
	\sup_{Z\in \CZ^B_i}\int_{\CZ^B_{*j}}|k_{ij}^{\al}(Z,Z_*)\frac{w_{i\be}(Z)}{w_{l\be}(Z_*)}-k^{\al}_{Nij}(Z,Z_*)|dZ_* \leq \frac{C}{N},
\end{align*}
for $\al=1,2$, and 
\begin{align}\label{approxk3ij}
	\sup_{Z\in \CZ^B_i}\int_{\CZ^B_{*i}}|k_{ij}^{3}(Z,Z_*)\frac{w_{i\be}(Z)}{w_{i\be}(Z_*)}-k^{3}_{Nij}(Z,Z_*)|dZ_* \leq \frac{C}{N},
\end{align}
together with change of variables $y=x_1-v_*(s-s_1)$ for the $L^\infty_x$ bound, we obtain that
\begin{align}\label{case26}
	\|I^{\al_1}_{6jl}\|_{L^2_x\cap L^\infty_x}&\leq \frac{C(1+t)^{-\frac{3}{4}}}{N}\sup_{0\leq t<\infty}\|(1+t)^{\frac{3}{4}}h_l(t)\|_{L^\infty_{v,I}(L^2_x\cap L^\infty_x)}+C_{N,\la}(1+t)^{-\frac{3}{4}}\|h_0\|_{L^\infty_{v,I}(L^1_x\cap L^\infty_x)}.
\end{align}
Collecting \eqref{case16} and \eqref{case26}, and taking summation on $\al_1=1,2$, it holds that
\begin{multline}\label{I6}
	\|I_{6jl}\|_{L^2_x\cap L^\infty_x}\leq C(\frac{1}{N^{1/8}}+
	\la)(1+t)^{-\frac{3}{4}}\sup_{0\leq t<\infty}\|(1+t)^{\frac{3}{4}}h_l(t)\|_{L^\infty_{v,I}(L^2_x\cap L^\infty_x)}\\+C_{N,\la}(1+t)^{-\frac{3}{4}}\|h_0\|_{L^\infty_{v,I}(L^1_x\cap L^\infty_x)}.
\end{multline}
We turn to $I_{7jl}$, defined in \eqref{Ijl}. If \begin{align*}
	\begin{cases}
		|v|>N,\quad \it{if}\ i\leq p\\
		|v|>N\ \it{or}\ I>N,\quad \it{if}\ i>p,
	\end{cases}
\end{align*}
or
$|v-v_*|> N,$ or $|v_*-v_{**}|> N$,
or
$$
		I_*>N\ \it{or}\ I_{**}>N,\quad \it{if}\ i>p,
$$
or $s-s_1\leq\la$, we bound $I_{7jl}$ using the approach in \eqref{case1}, \eqref{case2}, \eqref{case3} and \eqref{case4} that
\begin{align}\label{case17}
	\|I_{7jl}\|_{L^2_x\cap L^\infty_x}\leq&C(1+t)^{-\frac{3}{4}}(N^{-1/8}+\la)\sup_{0\leq t<\infty}\|(1+t)^{\frac{3}{4}}h_i(t)\|_{L^\infty_{v,I}(L^2_x\cap L^\infty_x)}.
\end{align}
If \begin{align*}
	\begin{cases}
		|v|\leq N,\quad \it{if}\ i\leq p\\
		|v|\leq N\ \it{and}\ I\leq N,\quad \it{if}\ i>p,
	\end{cases}
\end{align*}
$\max\{|v-v_*|,|v_*-v_{**}|\}\leq N$, $\min\{I_*,I_{**}\}\leq N$ if $i>p$, and $s-s_1>\la$, we use the same approximation function $k^{3}_{Nij}$ employed in 
\eqref{approxk3ij} and perform the calculations as in \eqref{1Ial} and \eqref{estJ} to deduce that
\begin{multline}\label{case27}
	\|I_{7jl}\|_{L^2_x\cap L^\infty_x}\leq \frac{C(1+t)^{-\frac{3}{4}}}{N}\sup_{0\leq t<\infty}\|(1+t)^{\frac{3}{4}}h_i(t)\|_{L^\infty_{v,I}(L^2_x\cap L^\infty_x)}
	+C_{N,\la}(1+t)^{-\frac{3}{4}}\|h_0\|_{L^\infty_{v,I}(L^1_x\cap L^\infty_x)}.
\end{multline}	
The combination of \eqref{case17} and \eqref{case27} shows that
\begin{multline}\label{I7}
	\|I_{7jl}\|_{L^2_x\cap L^\infty_x}\leq C(\frac{1}{N^{1/8}}+
	\la)(1+t)^{-\frac{3}{4}}\sup_{0\leq t<\infty}\|(1+t)^{\frac{3}{4}}h_i(t)\|_{L^\infty_{v,I}(L^2_x\cap L^\infty_x)}\\
	+C_{N,\la}(1+t)^{-\frac{3}{4}}\|h_0\|_{L^\infty_{v,I}(L^1_x\cap L^\infty_x)}.
\end{multline}
We collect \eqref{L2hi}, \eqref{I1}, \eqref{I23}, \eqref{I4}, \eqref{I5}, \eqref{I6} and \eqref{I7} to obtain that
\begin{align*}
	\|h_i(t,Z_i)\|_{L^2_x\cap L^\infty_x}&\leq C(\frac{1}{N^{1/8}}+
	\la)(1+t)^{-\frac{3}{4}}\sup_{0\leq t<\infty}\|(1+t)^{\frac{3}{4}}h(t)\|_{L^\infty_{v,I}(L^2_x\cap L^\infty_x)}+C_{N,\la}(1+t)^{-\frac{3}{4}}\|h_0\|_{L^\infty_{v,I}(L^1_x\cap L^\infty_x)},
\end{align*}
which, by the summation on $1\leq i\leq n$ and taking $L^\infty_{v,I}$ norm, yields
\begin{align*}
	\sup_{0\leq t<\infty}(1+t)^{\frac{3}{4}}\|h(t)\|_{L^\infty_{v,I}(L^2_x\cap L^\infty_x)} &\leq C(\frac{1}{N^{1/8}}+\la)\sup_{0\leq t<\infty}\|(1+t)^{\frac{3}{4}}h(t)\|_{L^\infty_{v,I}(L^2_x\cap L^\infty_x)}+C_{N,\la}\|h_0\|_{L^\infty_{v,I}(L^1_x\cap L^\infty_x)}.
\end{align*}
Consequently, we obtain \eqref{boundh} by choosing $\frac{1}{N^{1/8}}+\la$ to be sufficiently small.

When $x\in \T^3$, the proof of \eqref{boundhT} follows at similar argument. The exponential decay is obtained by replacing the time-dependent weight $(1+t)^{\pm 3/4}$ by $e^{\pm \la t}$ accordingly and using \eqref{L2decaytorus} in Proposition \ref{leL2}.
\end{proof}

\medskip
\noindent {\bf Acknowledgment:}\,
Zongguang Li would like to thank the Research Centre for Nonlinear Analysis at The Hong Kong Polytechnic University for supporting his postdoc study.

\medskip

\noindent{\bf Conflict of Interest:} The authors declare that they have no conflict of interest.

\end{document}